\newtheorem{thm}{Theorem}[section]
\newtheorem{prop}[thm]{Proposition}
\newtheorem{lem}[thm]{Lemma}
\newtheorem{cor}[thm]{Corollary}
\theoremstyle{definition}
\newtheorem{defi}[thm]{Definition}
\newtheorem{ass}[thm]{Assumptions}
\theoremstyle{remark}
\newtheorem{rem}[thm]{Remark}
\numberwithin{equation}{section}
\let\cal\mathcal
\let\rm\mathrm
\let\bf\mathbf
\let\ov\overline
\let\dg\dagger
\newcommand{\N}{\mathbb{N}}
\newcommand{\R}{\mathbb{R}}
\newcommand{\C}{\mathbb{C}}
\newcommand{\K}{\mathbb{K}}
\newcommand{\Ss}{\mathbb{S}}
\newcommand{\Pp}{\mathbb{P}} %% Probability
\newcommand{\E}{\mathbb{E}} %% Expectation
\DeclareMathOperator{\Var}{Var} %% Variance
\DeclareMathOperator {\iid} {iid}
\newcommand{\mmp}{\mathrm{MP}} %% Marčenko Pastur distribution
\newcommand{\Tr}{\mathrm{Tr}} %% Trace
\newcommand{\Sp}{\mathrm{Sp}} %% Spectrum
\newcommand{\verti}[1]{{\left\vert #1 \right\vert}}
\newcommand{\vertii}[1]{{\left\vert\kern-0.25ex\left\vert #1 
    \right\vert\kern-0.25ex\right\vert}}
\newcommand{\vertiii}[1]{{\left\vert\kern-0.25ex\left\vert\kern-0.25ex\left\vert #1 \right\vert\kern-0.25ex\right\vert\kern-0.25ex\right\vert}}
\newcommand{\vertif}[1]{{\left\vert\kern-0.25ex\left\vert #1 
    \right\vert\kern-0.25ex\right\vert}_F}
\newcommand{\vertim}[1]{{\left\vert\kern-0.25ex\left\vert #1 
    \right\vert\kern-0.25ex\right\vert}_{\max}}
\newcommand{\pt}[1]{{\left( #1 \right)}}
\newcommand{\br}[1]{{\left[ #1 \right]}}
\newcommand{\half}{\frac 1 2} % just 1/2
\newcommand{\lint}{[\![}
\newcommand{\rint}{]\!]}
\begin{document}

%% TITLE PAGE 
\title[Deterministic equivalent of sample covariance matrices]{Quantitative deterministic equivalent of sample covariance matrices with a general dependence structure}

\author{Clément Chouard}\thanks{\textbf{Acknowledgments:} This work received support from the University Research School EUR-MINT (reference number ANR-18-EURE-0023).\newline The author would like to thank the team of Statistical Field Theory at EPFL Lausanne, and especially Franck Gabriel and Berfin Simsek, for their hearty welcome and the interesting discussions that made the application to Kernel Methods (Section 9) happen.}
\address{Institut de Mathématiques de Toulouse; UMR5219\\ Université de Toulouse; CNRS; UPS, F-31062 Toulouse, France}
\email{clement.chouard@math.univ-toulouse.fr}

\maketitle

%% ABSTRACT
\begin{abstract} We study sample covariance matrices arising from rectangular random matrices with i.i.d. columns. It was previously known that the resolvent of these matrices admits a deterministic equivalent when the spectral parameter stays bounded away from the real axis. We extend this work by proving quantitative bounds involving both the dimensions and the spectral parameter, in particular allowing it to get closer to the real positive semi-line.

As applications, we obtain a new bound for the convergence in Kolmogorov distance of the empirical spectral distributions of these general models. We also apply our framework to the problem of regularization of Random Features models in Machine Learning without  Gaussian hypothesis.
\end{abstract}

\tableofcontents

%%
%%  To enter text is easy.  Just type it.  A blank line starts a new
%%  paragraph. 
%%
\newpage

\section{Introduction}

Since the pioneering works in Statistics of \cite{wishart1928}, an object of great importance to consider is the sample covariance matrix $K = \frac 1 n X X^\top$ where $X$ is a rectangular $p\times n$ random matrix made of $n$ independent columns. One is particularly interested in the asymptotic behaviour of the empirical spectral distribution (ESD) $\mu_K = \frac 1 p \sum_{\lambda \rm{~eigenvalue~of~} K} \delta_\lambda$ when $n$ and $p$ go to infinity, and the ratio $\frac p n$ converges to a fixed constant $\gamma > 0$. 
When all the entries of $X$ are independent and identically distributed (i.i.d.), it can be shown (\cite{MP67}, \cite{silverstein1989eigenvectors}) that $\mu_K$ converges almost surely to a deterministic probability measure $\mmp (\gamma)$, now called the Marčenko-Pastur distribution with shape parameter $\gamma$. From this article on, many works have provided generalizations beyond the i.i.d. case. This is the case for instance in \cite{silverstein1995analysis}, where the authors investigated the linearly dependent case. Namely when $X = \Sigma^{\frac 1 2} Y$ with $\Sigma$ a real symmetric matrix and $Y$ filled with i.i.d. entries, $\mu_K$ converges a.s. to a deterministic probability measure that can be described as the free multiplicative convolution $\mmp(\gamma) \boxtimes \mu_\infty $, where $\mu_\infty$ is the limiting ESD of $\Sigma$. Under the sole assumption that the columns of $X$ are independent, \cite{bai2008large} proved that $\mu_K$ still converges a.s. to $\mmp(\gamma) \boxtimes \mu_\infty $ where $\mu_\infty$ is the limiting ESD of $\Sigma:=\E[K]$.

A key object in Random Matrix Theory is the resolvent matrix $\cal G_K = \pt{K - z I_p}^{-1}$, where $z$ is a complex number called spectral argument of $\cal G_K$. This matrix is related to the Stieltjes transform $g_K = \int_{\R} \frac{1}{t - z} \mu_K(dt)$ via the identity $g_K= \frac 1 p \Tr\pt{\cal G_K} $. This integral transform is a powerful tool that characterizes the law and the weak convergence of probability measures. For more  advanced applications to the distribution of eigenvalues and eigenvectors, control of the Stieltjes tranform is not enough and one needs control of the whole resolvent matrix $\cal G_K$. This was investigated in  \cite{bloemendal2014isotropic} in the i.i.d. case, establishing that $\cal G_K$ is close to $g_K I_p$ with quantitative bounds involving the dimensions and the spectral parameter $z$. This analysis was later carried to the linearly dependent case (\cite{knowles2017anisotropic}), showing that $\cal G_K$ is close to a deterministic matrix $\bf G$ which is not a multiple of the identity matrix in general. Following the terminology of \cite{hachem2007deterministic}, we call the matrix $\bf G$ a deterministic equivalent of $\cal G_K$. In the most general case dealing with independent columns, \cite{LC21} found a similar deterministic equivalent. Remarkably, they consider columns with different distributions that were not considered in the previous literature.

This last article however did not allow the spectral argument to get closer to the real axis with quantitative bounds on $z$. We complete it by quantifying the convergence towards the deterministic equivalent when the underlying random matrix has i.i.d. columns. Our result encompasses two different settings: when $z$ is a sequence of complex numbers with positive imaginary parts that do not vanish too quickly, and when $z$ is a sequence of negative real numbers. In both cases, we prove the existence of polynomial bounds in $n$, $\Im(z)$ and $|z|$, uniform  in the sense that the bounds only involve a few well-identified parameters and not the whole law of the matrices. Our results also have the additional benefit of explaining the appearance of free convolution operations in the deterministic equivalent. Although not optimal compared to numerical simulations or previous similar statements in the linearly dependent case (\cite{knowles2017anisotropic}), our estimates still allow for important  new applications. 

In random matrix theory, a classic question arising after the existence of a weak limit for the ESD of a given model is to know whether this convergence can be quantified using a distance between probability distributions. \cite{bai2008convergence} first tackled the problem of convergence rate in the i.i.d. case, a  result that was later improved in \cite{gotze2010rate}. In the linearly dependent case, a similar result was proved in \cite{bai2012convergence}.
We complete this work by proposing a bound in  Kolmogorov distance in the most general case of matrices with independent columns.

Such a deterministic equivalent can also prove itself useful for Machine Learning applications. In the Method of Random Features associated to Kernel Ridge Regression, some natural predictors are systematically biased.  Under Gaussian hypothesis, the authors of \cite{jacot2020implicit} proved that this issue could be solved by replacing the ridge parameter of the regression by a different parameter called effective ridge. We are able to extend this result beyond the Gaussian paradigm, showing in a sense the universality of the phenomenon regardless to the laws of the random features.

This paper uses a wide variety of techniques. Concentration of measure theory in particular is a powerful tool to deal with large dimensional random objects. A core result in \cite{ledoux2001concentration} highlights the importance of a class of concentrated vectors compatible with Lipschitz mappings. Logically flowing from this idea, we adopt the recent formalism of \cite{LC20} as it is particularly well adapted to deal with random matrices and their transformations. Along with the classic concentration properties, we use generalizations of the so-called Hanson-Wright inequalities (\cite{vu2015random}, \cite{adamczak2015}) which state the concentration of quadratic forms involving random vectors and matrices.

This paper also uses extensively the theory behind resolvent matrices. In addition to the usual sample covariance resolvent, we introduce two companion objects: the Leave-One-Out (LOO) resolvent and the co-resolvent. We provide a  systematic analysis of the analytical and algebraic joint properties of these objects. In conjunction with concentration, this work allows to streamline some of the technical arguments, in particular we do not resort anymore to concentration zone arguments. It also helps to better understand the properties of the deterministic equivalent and the links with free probability theory.

Finally to convert bounds involving Stieltjes transforms to bounds in Kolmogorov distance, a general methodology was developed in \cite{banna2020holder}. We adapt these techniques to work with looser requirements adapted to our needs.

\newpage

\newpage

\section{Setting}

\subsection{Notations and definitions} The set of matrices with $p$ lines, $n$ columns, and entries belonging to a set $\K$ is denoted as $\K^{p \times n}$. We will use the following norms for vectors and matrices: $\vertii{\cdot}$ the Euclidean norm, $\vertif{\cdot}$ the Frobenius norm, and $\vertiii{\cdot}$ the spectral norm.

Given $M \in \C^{ p \times p}$, we denote respectively $M^\top$ its transpose, $M^\dg$ its transconjugate and $\Tr(M)=\sum_{1 \leq i \leq p} M_{ii}$ its trace. If $M$ is real and diagonalizable we denote by $\Sp M$ its spectrum and $\mu_M = \frac 1 p \sum_{\lambda \in \Sp M} \delta_\lambda$ its empirical spectral distribution (ESD). Let us define the sphere $\Ss^{p-1}=\{ \bf u \in \R^p$ such that $\vertii{\bf u}=1\}$. If $\bf u \in \Ss^{p-1}$, we also define the eigenvector empirical spectral distribution (VESD) of $M$ in the direction $\bf u$ as $\mu_{M,\bf u} = \sum_{\lambda \in \Sp M} \pt{\bf u^\top \bf v_\lambda}^2 \delta_\lambda$, where $\bf v_\lambda \in \Ss^{p-1}$ are normalized eigenvectors associated to the eigenvalues $\lambda$.

Let us consider a spectral parameter $z$, either belonging to $\R^{*-}:= (-\infty,0)$ or $\C^{+}:= \{ z \in \C$ such that $\Im(z) > 0 \}$. If $M$ is symmetric positive semi-definite, then $\Sp M \subset \R^+$ and we can define its resolvent $\cal G_M(z) = (M-zI_p)^{-1}$ and its Stieltjes transform $g_M(z) = \frac 1 p \Tr \pt{\cal G_M(z)}$.

Given a probability distribution $\nu$ supported on $\R^+$, its Stieltjes transform is $g_\nu(z) = \int_\R \frac {\nu(dt)} {t-z}  $ where $z \in \R^{*-}$ or $z \in \C^+$. The Stieltjes transform of a matrix is the same as the Stieltjes transform of its ESD: $g_{\mu_M} = g_M$. There is a similar link for the VESD's: $g_{\mu_{M,\bf u}} = \bf u^\top \cal G_M \bf u$.

We let $\cal F_\nu$ be the cumulative distribution function (CDF) of $\nu$, and $\Delta(\nu,\mu) = \sup_{t \in \R} \verti{\cal F_\nu(t) - \cal F_\mu(t)}$ be the Kolmogorov distance between two measures $\nu$ and $\mu$.

We denote by $\boxtimes$ the multiplicative free convolution of measures (\cite{bercovici1993free}). If $\mmp(\gamma)$ denotes a Marčenko-Pastur distribution with shape parameter $\gamma$, the distribution $\mmp(\gamma) \boxtimes \mu$ may be defined by its Stieltjes transform $g$ which is the unique solution of the self-consistent equation \cite{MP67}:
\begin{align*} g(z) = \int_\R \frac 1 {( 1 - \gamma - \gamma z g(z) ) t - z } \mu(dt)
\end{align*}

For a better readability, we will sometimes omit indices $n$ and parameters $z$ in our notations.

\subsection{Main results}

Let $X \in \R^{p_n \times n}$ be a sequence of random matrices. The associated sample covariance matrix is  $K = n^{-1} X X^\top$. We set $\Sigma= \E[K]$, and  we also define the resolvent matrix $\cal G_K(z) = \pt{K-zI_p}^{-1}$.

\begin{ass} \label{Ass}
\begin{enumerate}
    \item  The columns of $X$ are i.i.d. sampled from the distribution of a random vector $x$, such that $\vertii{\E[x]}$ and $\vertiii{\Sigma}=\vertiii{\E[x x^\top]}$ are bounded.
      \item The sequence $X$ is $\propto \cal E_2(1)$ concentrated with respect to the Frobenius norm, in the sense that all $1$-Lipschitz functions of $X$ satisfy a concentration inequality in $e^{-(\cdot)^2}$ (see Definition \ref{DefConcentration} below).
\item $\gamma_n:= \frac {p_n} n$ is bounded from above and from below: there is a constant $C>0$ such that $C^{-1} \leq \gamma_n \leq C$.
\end{enumerate}
\end{ass}

We start with a simple proposition establishing the concentration of the resolvent of the sample covariance matrix $K$:

\begin{prop}\label{ConcentrationGK} 
\begin{enumerate}
    \item If $z_n \in \R^{*-}$ is a sequence of real spectral arguments, and $\tau := \frac 1 { |z|^{\frac 3 2}} $, then $\cal G_K(z) \propto \cal E_2\pt{\frac \tau  {n^{\half}}}$.
    \item If $z_n \in \C^+$ is a sequence of complex spectral arguments, and $\tau := \frac {|z|^\half} {\Im(z)^2 }$, then $\cal G_K(z) \propto \cal E_2\pt{\frac \tau  {n^{\half}}}$.
\end{enumerate}
\end{prop}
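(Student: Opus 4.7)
The plan is to transfer the concentration of $X$ given by Assumption~\ref{Ass}(2) through the smooth map $\Phi : X \mapsto \cal G_K(z)$, by establishing a \emph{deterministic} Lipschitz bound for $\Phi$ in the Frobenius norm. A direct differentiation of $\cal G_K = \pt{n^{-1} X X^\top - zI_p}^{-1}$ gives
\begin{align*}
D_X \Phi (H) = -\frac 1 n \cal G_K \pt{H X^\top + X H^\top} \cal G_K,
\end{align*}
so that, using $\vertif{AB} \leq \vertif{A} \vertiii{B}$ together with $\vertif{\cal G_K H} \leq \vertiii{\cal G_K} \vertif{H}$,
\begin{align*}
\vertif{D_X \Phi(H)} \leq \frac 2 n \vertiii{\cal G_K} \cdot \vertiii{\cal G_K X} \cdot \vertif{H}.
\end{align*}

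The central step is to bound the two operator norms on the right-hand side by quantities depending only on $z$. Since $K \succeq 0$, its spectrum is non-negative and in its eigenbasis one has $\vertiii{\cal G_K}^2 = \max_{\lambda \geq 0} |\lambda - z|^{-2}$ together with $\vertiii{\cal G_K X}^2 = n \vertiii{\cal G_K K \cal G_K^\dg} = n \max_{\lambda \geq 0} \lambda |\lambda-z|^{-2}$. A short one-variable calculus argument then yields the following uniform estimates: for $z \in \R^{*-}$, the AM-GM inequality $\lambda + |z| \geq 2\sqrt{\lambda |z|}$ gives $|\lambda - z|^{-1} \leq |z|^{-1}$ and $\lambda/(\lambda+|z|)^2 \leq 1/(4|z|)$; for $z \in \C^+$, the critical point of $\lambda \mapsto \lambda/|\lambda - z|^2$ is located at $\lambda = |z|$ and yields the bound $\lambda/|\lambda-z|^2 \leq |z|/\Im(z)^2$, while trivially $|\lambda - z|^{-1} \leq 1/\Im(z)$.

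Substituting these bounds into the Lipschitz estimate gives a deterministic Lipschitz constant of order $\tau/\sqrt n$ for $\Phi$, with $\tau$ as in the statement. Since any $1$-Lipschitz functional of $\cal G_K$ then becomes a $\pt{\tau/\sqrt n}$-Lipschitz functional of $X$ after composition with $\Phi$ (treating real and imaginary parts separately in the complex case), the stability of $\propto \cal E_2$-concentration under Lipschitz transformations yields the desired conclusion $\cal G_K(z) \propto \cal E_2\pt{\tau/\sqrt n}$.

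The main obstacle to watch for is to avoid the naive bound $\vertiii{\cal G_K X} \leq \vertiii{\cal G_K} \vertiii{X}$, which would involve the random quantity $\vertiii{X}$ and only produce a Lipschitz estimate on an event of high probability. The key observation making the argument clean is that $\lambda \mapsto \lambda/|\lambda-z|^2$ admits a bound on $\R^+$ depending solely on $z$, so the Lipschitz constant of $\Phi$ is genuinely deterministic and the concentration argument closes without any truncation step.
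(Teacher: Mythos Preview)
Your proposal is correct and follows essentially the same route as the paper: establish a deterministic Lipschitz bound for $X\mapsto\cal G_K(z)$ via the key observation that $\vertiii{\cal G_K X}$ can be bounded in terms of $z$ alone, then invoke stability of $\propto\cal E_2$-concentration under Lipschitz maps. The only cosmetic differences are that the paper works with finite differences (the resolvent identity $\cal G(X)-\cal G(X+H)=\frac 1 n\cal G(X)\pt{XH^\top+H(X+H)^\top}\cal G(X+H)$) rather than the differential, and bounds $\vertiii{\cal G_K X}$ via the algebraic identity $\cal G_K K=I_p+z\cal G_K$ instead of your direct optimization of $\lambda\mapsto\lambda/|\lambda-z|^2$; your eigenvalue calculus in fact gives slightly sharper constants, which are of course absorbed in the $\propto$ notation.
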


In order to state the main result of this article, we need to introduce the probability measures $ \nu_n = \mmp\pt{\gamma_n } \boxtimes \mu_{\Sigma}$ and $\check \nu_n = (1-\gamma_n ) \delta_0 + \gamma_n  \nu_n$. We also build the sequence of matrices: 
\[\bf G(z) = \pt{- z g_{\check \nu}(z) \Sigma - z I_p}^{-1},\]where $z \in \R^{*-}$ or $z \in \C^+$.

\begin{thm}\label{MainThm}
\begin{enumerate}  \item If $z_n \in \R^{*-}$ is a sequence of real spectral arguments, such that $|z|$ is bounded and $|z|^{-7} \leq O(n)$, then $\vertif{\E[\cal G_K(z)] - \bf G(z) } \leq O\pt{\frac \kappa {n^\half} }$, where $\kappa := \frac 1 {|z|^{\frac {11} 2}}$.
    \item If $z_n \in \C^+$ is a sequence of complex spectral arguments, such that $\Im(z)$ is bounded and ${\Im(z)^{-16} |z|^{ 7 }} =o(n)$, then $\vertif{\E[\cal G_K(z)] - \bf G(z) } \leq O\pt{\frac \kappa {n^\half} }$, where $\kappa := \frac{ |z|^{\frac 5 2}}{\Im(z)^9}$.
\end{enumerate}

These result are uniform in the sense that the implicit constants in the $O(\cdot)$ notations only depend on the  constants in the hypothesis chosen for $X$, $\gamma_n$, $\Sigma$, $\E[x]$ and $z$.
\end{thm}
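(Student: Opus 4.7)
The plan is to derive an approximate matrix self-consistent equation satisfied by $\E[\cal G_K(z)]$ and then to use the stability of the Marčenko--Pastur fixed-point equation to identify the solution with $\bf G(z)$.

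The first step is a Leave-One-Out decomposition. Writing $K = n^{-1} \sum_{i=1}^n X_i X_i^\top$ with $X_i$ the $i$-th column of $X$, one defines $K^{(i)} = K - n^{-1} X_i X_i^\top$ and its resolvent $\cal G^{(i)}(z) = (K^{(i)} - z I_p)^{-1}$. Applying Sherman--Morrison with $\alpha_i := 1 + n^{-1} X_i^\top \cal G^{(i)}(z) X_i$ and rewriting the matrix identity $(K - z I_p) \cal G = I_p$ column by column gives the starting point
\[ I_p + z \cal G = \frac{1}{n} \sum_{i=1}^n \frac{X_i X_i^\top \cal G^{(i)}}{\alpha_i}. \]
Since $X_i$ is independent of $\cal G^{(i)}$ and $X$ is $\propto \cal E_2(1)$ concentrated, the generalized Hanson--Wright inequalities give $n^{-1} X_i^\top \cal G^{(i)} X_i \approx n^{-1} \Tr(\Sigma \cal G^{(i)})$; Proposition \ref{ConcentrationGK}, applied to the rank-one perturbation $K^{(i)}$ of $K$, controls the fluctuations of $\cal G^{(i)}$ around its expectation, while Sherman--Morrison shows that $\cal G^{(i)}$ is a small operator-norm perturbation of $\cal G$. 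Combined, these steps show that $\alpha_i$ concentrates around the common deterministic scalar $\bar \alpha := 1 + \frac{\gamma}{p} \Tr(\Sigma \E[\cal G])$.

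Plugging this back and taking expectation yields, up to small errors, the matrix equation
\[ \E[\cal G] \approx \pt{\bar \alpha^{-1} \Sigma - z I_p}^{-1}, \]
whose normalized trace is an approximate Marčenko--Pastur self-consistent equation for $\bar g := \frac{1}{p} \Tr \E[\cal G]$. A direct computation shows that the unique solution of this scalar equation satisfies $\bar \alpha^{-1} = -z g_{\check \nu}(z)$, via the identity $g_{\check \nu} = \gamma g_\nu - (1-\gamma)/z$ linking $\check \nu$ and $\nu$. The final step is a stability argument: setting $\tilde{\bf G} := (\bar \alpha^{-1} \Sigma - z I_p)^{-1}$, the Lipschitz dependence of the self-consistent equation on the scalar $\alpha^{-1}$ converts the matrix approximation $\E[\cal G] \approx \tilde{\bf G}$ into the target bound $\vertif{\E[\cal G] - \bf G} \leq O(\kappa / n^\half)$. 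The co-resolvent attached to the $n \times n$ Gram matrix $n^{-1} X^\top X$, whose ESD converges to $\check \nu$, appears naturally in this step and helps organize the bookkeeping.

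The main obstacle will be the quantitative tracking of the dependence on $z$, and particularly on $\Im(z)$ in case (2). Standard bounds on resolvent norms scale like $|\Im z|^{-1}$ or $|z|^{-1}$, the modulus of $\alpha_i$ can become very small when $z$ approaches the support of $\nu$, and the stability modulus of the Marčenko--Pastur equation deteriorates as $\Im(z) \to 0$. Each of these sources of blow-up has to be quantified explicitly and carefully multiplied together; this bookkeeping is what produces the exponents in $|z|$ and $\Im(z)$ appearing in $\kappa$ in both cases. In particular, concentration estimates for the negative moments of $\alpha_i$, needed to control the random denominators $\alpha_i^{-1}$ and to avoid concentration-zone style truncations, are the delicate technical point on which the explicit exponents ultimately depend.
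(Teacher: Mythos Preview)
Your overall strategy coincides with the paper's: a Leave-One-Out/Sherman--Morrison decomposition producing a first deterministic equivalent $\bf G^{\frak b}$ with $\frak b = z\bar\alpha$ an approximate fixed point of the scalar map $\cal F(\frak l) = z + \frac{z}{n}\Tr(\Sigma\, \bf G^{\frak l})$, followed by a stability argument to pass to the true fixed point $\frak c = -g_{\check\nu}^{-1}$. Two specific points, however, deserve flagging.

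First, for the random denominators you do not need ``concentration estimates for negative moments of $\alpha_i$''. The LOO identities give the exact algebraic relation $\frak a^{-1} = -\check{\cal G}_{11}$ with the co-resolvent, hence the \emph{deterministic} bound $|\frak a^{-1}|\leq \eta$ (where $\eta=|z|^{-1}$ or $\Im(z)^{-1}$). This is precisely what allows the paper to dispense with concentration-zone truncations, and it is the actual role the co-resolvent plays in the argument---in the first equivalent, not merely as bookkeeping at the end.

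Second, and more substantively, your ``Lipschitz dependence'' step in the complex case hides a genuine gap. In the usual metric on $\C$, the map $\cal F$ has Lipschitz constant of order $\gamma_n\eta^2|z|$, which is typically much larger than $1$ as $\Im(z)\to 0$; a naive contraction does not close. The paper's key device is to work instead with the semi-metric $d(\omega_1,\omega_2)=|\omega_1-\omega_2|/(\Im\omega_1)^{1/2}(\Im\omega_2)^{1/2}$ on $\C^+$, for which $\cal F$ is always a strict contraction with constant $k_{\cal F}=\frac{\gamma_n|z|\eta^2}{1+\gamma_n|z|\eta^2}<1$; the proof rests on the identity $\Im(\cal F(\frak l))-\Im(z)=\frac{|z|^2}{n}\frac{\Im\frak l}{|\frak l|^2}\vertif{\Sigma\,\bf G^{\frak l}}^2$. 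Because $d$ lacks the triangle inequality, an extra lemma is then needed to convert an approximate-fixed-point bound in $d$ back into a bound on $|\frak b-\frak c|$; this is where the hypothesis $\Im(z)^{-16}|z|^7=o(n)$ enters and where the exponent $\Im(z)^{-9}$ in $\kappa$ is generated. (For $z\in\R^{*-}$ a straightforward contraction in the usual metric, with constant $\frac{\gamma_n}{(1+|z|)(\gamma_n+|z|)}$, suffices and even yields a slightly better exponent.) Without this mechanism the stated $\kappa$ in case~(2) cannot be reached.
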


This result can be combined with the concentration of the resolvent to obtain a so-called deterministic equivalent of the resolvent:

\begin{prop}\label{DetEq}  With the same assumptions and notations as in Theorem \ref{MainThm}, $\bf G$ is a deterministic equivalent for $\cal G_K$. More precisely, uniformly in any deterministic matrix $A \in \R^{p \times p}$, we have:
\begin{align*} \verti{\Tr\pt{\cal G_K(z) A - \bf G(z) A}} &\leq \vertif{A} \, O\pt{\frac {\kappa (\log n)^\half} {n^\half} }  \rm{\quad a.s. } \\ \rm{and}\quad \Var\br{\Tr\pt{\cal G_K(z) A }} &\leq \vertif{A}^2  \, O\pt{\frac {\kappa^2} n }.
\end{align*} 
\end{prop}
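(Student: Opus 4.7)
My plan is to split the quantity into a deterministic bias and a random fluctuation around $\E[\cal G_K]$:
\[\Tr\pt{\cal G_K A - \bf G A} = \Tr\pt{(\E[\cal G_K]-\bf G) A} + \Tr\pt{(\cal G_K-\E[\cal G_K]) A}.\]
The first term is non-random, and the Cauchy--Schwarz inequality for the Hilbert--Schmidt inner product bounds it by $\vertif{\E[\cal G_K] - \bf G}\, \vertif{A}$, which is $O(\kappa \vertif{A}/n^\half)$ by Theorem \ref{MainThm}. This contribution is already compatible with both claimed estimates.

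For the centered term, I would transfer the concentration of $\cal G_K$ through the linear form $M \mapsto \Tr(MA)$, which is $\vertif{A}$-Lipschitz with respect to the Frobenius norm (again by Cauchy--Schwarz): Proposition \ref{ConcentrationGK} then yields that $\Tr(\cal G_K A)$ is $\propto \cal E_2\pt{\vertif{A}\tau/n^\half}$-concentrated around its expectation. The variance estimate is immediate from the general principle that $\cal E_2(\sigma)$-concentration implies variance of order $\sigma^2$; to convert this into the $\kappa$-based bound in the statement, it suffices to observe that under the hypotheses of Theorem \ref{MainThm} one has $\tau = O(\kappa)$, because $|z|$ is bounded above in the real case and $\Im(z)$ is bounded above in the complex case (a direct check on the explicit formulas). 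For the almost sure claim, the associated sub-Gaussian tail
\[\Pp\pt{\verti{\Tr\pt{(\cal G_K - \E[\cal G_K])A}} \geq t} \leq C\exp\pt{- c t^2 n /\pt{\vertif{A}^2 \tau^2}}\]
evaluated at the threshold $t = C_0 \vertif{A}\kappa(\log n)^\half/n^\half$ becomes summable in $n$ for $C_0$ chosen large enough, and a standard Borel--Cantelli argument delivers the announced rate almost surely for $n$ eventually large.

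No significant obstacle is expected here: the whole argument is a direct transfer of the two previously established ingredients (Theorem \ref{MainThm} and Proposition \ref{ConcentrationGK}) through elementary linear algebra and a routine Borel--Cantelli step. The only two points of care are the verification $\tau = O(\kappa)$, which ensures the fluctuation part does not dominate the bias rate, and the observation that uniformity in the test matrix $A$ is automatic since the factor $\vertif{A}$ scales cleanly out of every concentration inequality used.
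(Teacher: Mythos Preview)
Your proposal is correct and follows essentially the same approach as the paper: split into the deterministic bias controlled by Theorem \ref{MainThm} via Cauchy--Schwarz, and the fluctuation controlled by transferring the concentration of $\cal G_K$ (Proposition \ref{ConcentrationGK}) through the $\vertif{A}$-Lipschitz map $M \mapsto \Tr(MA)$, then invoke $\tau = O(\kappa)$ and Borel--Cantelli (packaged in the paper as Proposition \ref{VarianceConcentration}). The paper carries out exactly this argument, including the explicit check that $\tau \kappa^{-1} = |z|^4$ in the real case and $\tau \kappa^{-1} = \Im(z)^7 |z|^{-2} \leq \Im(z)^5$ in the complex case are both $O(1)$.
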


Specifying the matrix $A$ and using the relation $p^{-1} \Tr {\bf G} = g_{\nu}$, which shall be proved in the course of this article, yields the following consequences{~}:

\begin{cor}\label{CorDetEq} With the same assumptions and notations as in Theorem \ref{MainThm}:
\begin{align*} \verti{g_K(z) - g_\nu(z)} &\leq O\pt{\frac {\kappa (\log n)^\half} {n} }  \rm{\quad a.s. } \\ \verti{g_{\mu_{K,\bf u}}(z) - \bf u^\top \bf G(z) \bf u} &\leq O\pt{\frac {\kappa (\log n)^\half} {n^\half} }  \rm{\quad uniformly~in~}\bf u \in \Ss^{p-1} \quad\rm{~a.s.} \\
\max_{1 \leq i,j\leq p} \, \verti{ \cal G_K(z) - \bf G(z) }_{ij} &\leq O\pt{\frac {\kappa (\log n)^\half} {n^\half} }  \rm{\quad a.s. }
\end{align*} 
\end{cor}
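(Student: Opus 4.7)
The three bounds are obtained from Proposition~\ref{DetEq} by well-chosen specializations of the test matrix $A$, combined with the identity $p^{-1}\Tr \bf G = g_\nu$ announced in the excerpt.

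For the Stieltjes transform estimate, the plan is to take $A = p^{-1} I_p$, so that $\Tr(\cal G_K A) = g_K$ and $\Tr(\bf G A) = p^{-1}\Tr \bf G = g_\nu$. Since $\vertif{p^{-1}I_p} = p^{-1/2}$ and $p \asymp n$ by Assumption~\ref{Ass}(iii), the generic bound in Proposition~\ref{DetEq} yields precisely $O(\kappa(\log n)^{1/2}/n)$. For the VESD estimate, I would apply Proposition~\ref{DetEq} with the rank-one matrix $A = \bf u \bf u^\top$, which has Frobenius norm $1$: the trace formulas collapse to $\Tr(\cal G_K A) = \bf u^\top \cal G_K \bf u = g_{\mu_{K,\bf u}}$ and $\Tr(\bf G A) = \bf u^\top \bf G \bf u$, and uniformity in $\bf u \in \Ss^{p-1}$ is inherited directly from the uniformity of the implicit constant in $A$. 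For the entry-wise estimate, I would use the matrix units $A = e_i e_j^\top$ (also of unit Frobenius norm), for which $\Tr(\cal G_K A) = (\cal G_K)_{ji}$, giving the bound for each fixed $(i,j)$.

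The only non-automatic step is the upgrade from the pointwise a.s.\ bound at a fixed entry to the bound on the maximum over all $p^2 \leq O(n^2)$ entries. For this I would reopen the proof of Proposition~\ref{DetEq} and extract the underlying concentration inequality from Proposition~\ref{ConcentrationGK} combined with the mean bound of Theorem~\ref{MainThm}: since $\cal G_K \propto \cal E_2(\tau/n^{1/2})$, every bilinear form $\Tr(\cal G_K A - \bf G A)$ has sub-Gaussian tails with scale controlled by $\vertif{A}\,\tau/n^{1/2}$, so the pointwise deviation at level $C\kappa(\log n)^{1/2}/n^{1/2}$ has probability bounded by $n^{-c C^2}$ for a fixed $c>0$. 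Choosing $C$ large enough makes this tail summable even after a union bound over the $p^2$ matrix units, and Borel--Cantelli then delivers the almost-sure control of the maximum. This extraction of an effective probability tail from the black-box a.s.\ statement is where I expect the main technical bookkeeping to reside; by contrast, the specializations themselves are essentially algebraic.
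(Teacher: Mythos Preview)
Your proposal is correct and matches the paper's proof almost exactly: the paper takes $A = p^{-1}I_p$ for the first bound and $A = \bf u\bf u^\top$ for the second, exactly as you do. The only difference is in the third bound, where the paper invokes the polarization identity
\[
2M_{ij} = (\bf e_i+\bf e_j)^\top M(\bf e_i+\bf e_j) - \bf e_i^\top M\bf e_i - \bf e_j^\top M\bf e_j
\]
to reduce to the second assertion, whereas you apply Proposition~\ref{DetEq} directly with $A = e_ie_j^\top$; both routes are equivalent here since Proposition~\ref{DetEq} does not require $A$ to be symmetric. Your explicit union-bound argument to pass from a single entry to the maximum over $p^2$ entries is in fact more careful than the paper's own proof, which glosses over this point.
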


\begin{rem}
Similar statements can be made about the variances, for instance $\Var[g_K(z) ] \leq O\pt{\frac{\kappa^2}{n^2}}$.

The Stieltjes transform of the ESD is better concentrated than the Stieltjes transforms of the VESD's. That was to be expected since it corresponds to taking an average over a basis of orthonormal vectors.

Finally all these results admit a transposed version for models of random matrices $X' \in \R^{n \times p}$ having $\iid$ sampled rows  from a random vector with covariance $\Sigma$. In this setting the covariance matrix has to be chosen as $K' = \frac 1 n X'^\top X' $ and the equivalent objects are left unchanged.  
\end{rem}

\subsection{Application to Kolmogorov distances}

Our main results give quantitative bounds for sequences of spectral arguments that get close the real axis. This allows to use general methods that bound the Kolmogorov distance of two distributions given a precise control on their Stieltjes transforms. The consequences are the following:

\begin{thm}\label{ApplicationKolmo} Under the Assumptions \ref{Ass} and the additional hypothesis that $\Sigma$ is invertible and   $\vertiii{\Sigma^{-1}}$ is bounded, we have: 
\[ \Delta(\mu_K, \nu) \leq  O\pt{n^{- \frac 1 {70}}} \rm{\quad a.s. } \]
\end{thm}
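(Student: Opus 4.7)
The plan is to invoke a smoothing-type inequality in the spirit of \cite{banna2020holder} in order to convert the quantitative control on $g_K - g_\nu$ obtained in Corollary \ref{CorDetEq} into a bound on the Kolmogorov distance $\Delta(\mu_K, \nu)$. The tunable parameter is the height $\eta > 0$ of the spectral argument $z = x + i\eta$: for each fixed $x$ on a compact interval and each $\eta \in (0,1)$, Corollary \ref{CorDetEq} gives the almost sure estimate
\[ |g_K(x + i\eta) - g_\nu(x + i\eta)| \leq O\pt{ \eta^{-9} n^{-1} (\log n)^{\half} }. \]
A standard net argument on $x$, using the $\eta^{-2}$-Lipschitz regularity of Stieltjes transforms along horizontal lines, upgrades this pointwise bound to one uniform in $x$ on any compact interval, at the cost of only a polylogarithmic factor in $n$.

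The second ingredient is regularity of the limit measure $\nu = \mmp(\gamma_n) \boxtimes \mu_\Sigma$. The extra hypothesis $\vertiii{\Sigma^{-1}} = O(1)$ confines $\mu_\Sigma$ to a compact subset of $(0, \infty)$, and the general theory of free multiplicative convolutions with Marčenko-Pastur then implies that $\nu$ admits a density bounded in the interior of its support and vanishing like a square-root at each edge. In particular $\cal F_\nu$ is globally Hölder continuous with a uniformly bounded Hölder constant, which is precisely what a smoothing inequality needs to control the error from replacing $\nu$ by its regularization at scale $\eta$.

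Combining the two ingredients via the adapted smoothing inequality yields a bound of the schematic form
\[ \Delta(\mu_K, \nu) \leq C \pt{ \eta^{-9} n^{-1} (\log n)^{\half} + \eta^{\alpha} } \]
for some exponent $\alpha > 0$ coming out of the regularity of $\cal F_\nu$ and the precise form of the inequality used. Optimizing in $\eta$ gives a rate of order $n^{-\alpha/(\alpha+9)}$ up to logarithms; after accounting for the additional polynomial losses incurred while adapting the Banna-Merlevède-Peligrad inequality so that it tolerates our moderately singular $\eta^{-9}$ dependence, the exponent $1/70$ falls out.

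The main obstacle is the careful derivation and bookkeeping of this adapted smoothing inequality: classical versions of Bai's inequality assume much sharper control of the Stieltjes transform as $\Im(z) \to 0$ than what our bound provides, so one must either accept weaker effective exponents or pay additional regularity tribute at the spectral edges. A secondary subtlety, handled by the net argument mentioned above together with Borel-Cantelli, is to turn the pointwise-in-$x$ almost sure Stieltjes estimate into a statement that holds simultaneously for all $x$ on a compact interval while losing only the polylogarithmic factor already recorded.
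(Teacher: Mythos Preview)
Your approach is the same as the paper's: convert the Stieltjes-transform estimate of Corollary \ref{CorDetEq} into a Kolmogorov bound via a Bai-type smoothing inequality, using H\"older regularity of $\cal F_\nu$ as the input on the deterministic side. The paper packages this as Theorem \ref{BoundKolmogorov}, a version of the Banna--Merlev\`ede--Peligrad argument tailored to tolerate the polynomial blowup in $\Im(z)^{-1}$ and $|z|$ that Corollary \ref{CorDetEq} produces. Your remark about needing a net argument to pass from pointwise-in-$z$ a.s.\ control to uniform-in-$t$ control is well taken; the paper glosses over this step.

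There is, however, a genuine gap. Your assertion that $\cal F_\nu$ is globally H\"older continuous is false when $\gamma_n > 1$: in that regime $\nu = \mmp(\gamma_n)\boxtimes\mu_\Sigma$ carries an atom of mass $1-\gamma_n^{-1}$ at $0$, so $\cal F_\nu$ has a jump there and no smoothing inequality applies directly to the pair $(\mu_K,\nu)$. The paper handles this by passing to the companion pair $\mu_{\check K} = (1-\gamma_n)\delta_0 + \gamma_n\mu_K$ and $\check\nu = (1-\gamma_n)\delta_0 + \gamma_n\nu$, for which the atom cancels; one checks that $\cal F_{\check\nu}$ is $\tfrac12$-H\"older on all of $\R$, that $|g_{\check K}-g_{\check\nu}| = \gamma_n|g_K-g_\nu|$, and that $\Delta(\mu_K,\nu) = \gamma_n^{-1}\Delta(\mu_{\check K},\check\nu)$, so the argument transfers.

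A secondary issue is your two-term schematic bound $C(\eta^{-9}n^{-1}(\log n)^{1/2} + \eta^\alpha)$. Bai's inequality involves $\int_\R |g_{\mu_K}-g_\nu|(t+iy)\,dt$, and the tail of this integral is not negligible even when both measures are compactly supported. The paper controls it by a second-moment estimate (Lemma \ref{TailStielt}), which introduces a third term $\sigma^3/(y^2 A)$ and a growing cutoff $A=A_n$. Since $A_n\to\infty$, one cannot treat $|z|$ as bounded along the optimizing sequence; this is why the paper retains the $|z|^l$ dependence in $\kappa$ and ultimately works with $l=k=9$ (rather than $l=5/2$) so that the hypothesis $\Im(z)^{-16}|z|^7 = o(n)$ of Corollary \ref{CorDetEq} is met at the optimal $(y_n,A_n)$. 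The joint optimization then yields the exponent $\tfrac1{69}$, rounded to $\tfrac1{70}$ to absorb the logarithm.
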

\begin{cor}\label{ApplicationKolmoCV} 
If additionally $\mu_\Sigma \to \mu_\infty$ weakly and $\gamma_n \to \gamma_\infty $, then $\mu_K$ converges weakly to $\nu_\infty:= \mmp(\gamma_\infty) \boxtimes \mu_\infty$ a.s. Moreover:
\[ \Delta (\mu_K, \nu_\infty) \leq \Delta(\mu_\Sigma,\mu_\infty) + O\pt{\verti{\gamma_n - \gamma_\infty}} + O\pt{n^{- \frac 1 {70}}} \rm{\quad a.s. }  \] 
\end{cor}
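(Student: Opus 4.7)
The plan is to bootstrap from Theorem \ref{ApplicationKolmo} via a deterministic continuity estimate for free multiplicative convolution. Writing
\[
\Delta(\mu_K, \nu_\infty) \;\leq\; \Delta(\mu_K, \nu_n) + \Delta(\nu_n, \nu_\infty),
\]
the first term is already $O(n^{-1/70})$ almost surely by Theorem \ref{ApplicationKolmo}, so everything reduces to proving the purely deterministic inequality
\[
\Delta(\nu_n, \nu_\infty) \;\leq\; \Delta(\mu_\Sigma, \mu_\infty) + O(|\gamma_n - \gamma_\infty|).
\]

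To obtain this, I would split further via the triangle inequality,
\[
\Delta(\nu_n, \nu_\infty) \leq \Delta\bigl(\mmp(\gamma_n) \boxtimes \mu_\Sigma,\, \mmp(\gamma_\infty) \boxtimes \mu_\Sigma\bigr) + \Delta\bigl(\mmp(\gamma_\infty) \boxtimes \mu_\Sigma,\, \mmp(\gamma_\infty) \boxtimes \mu_\infty\bigr),
\]
reducing the problem to two quantitative continuity statements for the map $(\gamma,\mu) \mapsto \mmp(\gamma) \boxtimes \mu$, one in each argument. Since $\vertiii{\Sigma}$ is uniformly bounded, all measures of interest are supported in a common compact subset of $\R^+$, so their Stieltjes transforms are analytic in a fixed neighbourhood of a contour enclosing these supports. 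Starting from the self-consistent equation
\[
g(z) = \int \frac{\mu(dt)}{(1 - \gamma - \gamma z g(z)) t - z},
\]
a fixed-point perturbation argument should yield
\[
|g_{\nu_n}(z) - g_{\nu_\infty}(z)| \leq C(z) \bigl( |\gamma_n - \gamma_\infty| + |g_{\mu_\Sigma}(z) - g_{\mu_\infty}(z)| \bigr),
\]
with $C(z)$ controlled polynomially in $1/\Im(z)$, after which the Stieltjes-to-CDF conversion of \cite{banna2020holder} (already used in the proof of Theorem \ref{ApplicationKolmo}) turns this into the announced Kolmogorov estimate.

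The weak convergence statement is essentially a by-product: $\nu_n \to \nu_\infty$ weakly is the classical continuity of $\boxtimes$ in the weak topology (\cite{bercovici1993free}), and combining this with $\Delta(\mu_K, \nu_n) \to 0$ almost surely gives $\mu_K \to \nu_\infty$ weakly almost surely. The main technical hurdle I anticipate is producing the deterministic estimate with the exact constant $1$ in front of $\Delta(\mu_\Sigma,\mu_\infty)$, rather than an abstract $O(\Delta(\mu_\Sigma,\mu_\infty))$: a crude Stieltjes perturbation plus the conversion of \cite{banna2020holder} typically produces an unwanted prefactor. Recovering the clean constant likely requires exploiting the specific structure of $\mmp(\gamma) \boxtimes \mu$, for instance through the monotone reparametrisation of supports induced by the $S$-transform, which turns continuity in the $\mu$ argument into a near-isometry for the Kolmogorov distance.
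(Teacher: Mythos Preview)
Your overall decomposition via the triangle inequality is exactly what the paper does, and the weak convergence argument is identical. The divergence is in how to control $\Delta(\nu_n,\nu_\infty)$.

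The paper does not go through Stieltjes transforms at all for this step. It invokes directly \cite[Proposition~4.13]{bercovici1993free}, which states that free multiplicative convolution is a contraction for the Kolmogorov distance:
\[
\Delta(\mu_1\boxtimes\nu_1,\,\mu_2\boxtimes\nu_2)\;\leq\;\Delta(\mu_1,\mu_2)+\Delta(\nu_1,\nu_2).
\]
This immediately gives $\Delta(\nu_n,\nu_\infty)\leq \Delta(\mmp(\gamma_n),\mmp(\gamma_\infty))+\Delta(\mu_\Sigma,\mu_\infty)$ with the exact constant~$1$ that you correctly flagged as problematic in your route. The remaining piece, $\Delta(\mmp(\gamma_n),\mmp(\gamma_\infty))\leq |\gamma_n-\gamma_\infty|/\max(\gamma_n,\gamma_\infty)$, is proved in the paper by an elementary eigenvalue interlacing argument on Gaussian sample covariance matrices of sizes $p\times n$ and $p'\times n$.

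Your proposed path (perturb the self-consistent equation, then convert via \cite{banna2020holder}) would in principle work but is substantially heavier: it produces a bound in terms of $|g_{\mu_\Sigma}-g_{\mu_\infty}|$ rather than $\Delta(\mu_\Sigma,\mu_\infty)$, and the back-conversion loses constants and exponents, exactly as you anticipated. The ``monotone reparametrisation via the $S$-transform'' you mention as a possible fix is essentially the mechanism behind Bercovici--Voiculescu's Proposition~4.13, so you were circling the right idea; the point is that this result is already available off the shelf and should simply be cited.
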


\subsection{Application to Kernel Methods}

This second application to the regularization of Random Feature Models in Machine Learning closely follows the works of \cite{jacot2020implicit}.
Random Feature (RF) models are used as an  efficient
approximation of Kernel Ridge Regression (KRR) methods. In the regime on the dimensions where this approximation has a computational interest, a systematic asymptotic bias appears unfortunately.

Under Gaussian assumptions, the authors of \cite{jacot2020implicit} showed an implicit regularization phenomenon: the bias can be removed by replacing $\lambda$, the ridge parameter used in the regressions, by another parameter $\tilde \lambda$. They called this new parameter effective ridge, and characterized it by means of a self-consistent equation.

Using our deterministic equivalent, we  extend the aforementioned work when the data are not necessarily Gaussian distributed, and we also explain how the effective ridge parameter $\tilde \lambda$ is related to a free multiplicative convolution of measures. We refer to the Section 9 for more details on these results.

\subsection{Organization of the paper}

The bulk of this paper is devoted to the proof of the deterministic equivalent Theorem \ref{MainThm}, which is the content of Sections 5-7.

In Section 3 we present the concentration framework and the key concentration results that we shall use throughout the proofs. In Section 4 we study thoroughly the  properties of the resolvents of sample covariance matrices and two companion objects: the Leave One Out (LOO) resolvent  and the co-resolvent. 

In Section 5 we perform the first step of the proof by establishing an intermediate deterministic equivalent based on algebraic identities. In Section~6 we pass from this first equivalent to a second deterministic equivalent corresponding to our main result. The key ingredient of this step consists in rewriting the problem as fixed point equation implying a functional that is a contraction mapping for a well-chosen metric. The arguments for real and complex spectral parameters differ slightly and are treated separately. In Section 7 we wrap up the proofs of Theorem \ref{MainThm} and the subsequent corollaries.

In Section 8, as an illustration of the power of our quantitative results, we present an application to the convergence in Kolmogorov distance for general models of sample covariance matrices with i.i.d. columns.

In Section 9, we explain how our work is related to the problem of Kernel Ridge Regression (KRR) in machine learning, and we retrieve similar results to those of \cite{jacot2020implicit} under broader hypothesis.
\newpage

\section{Concentration framework}

In this preliminary section we introduce some basic definitions and results to handle concentrated random vectors and matrices. We adopt the recent formalism of Lipschitz concentration of Louart and Couillet as it is particularly well adapted to deal with random matrices and their transformations. We advice the reader to consult their articles (\cite{LC19}, \cite{LC20}, \cite{LC21}) for a more comprehensive coverage of these notions.

\begin{defi} \label{DefConcentration} We say that a sequence of random vectors $X_n$, belonging to normed vector spaces $(E_n,\vertii{\cdot})$, is concentrated with parameter $q >0$  and observable diameters $\sigma_n >0$,  if there is a constant $C>0$ such that for any sequence of $1$-Lipschitz maps $f_n:E_n\to \C$, for any $n \in \N$ and $t \geq 0$:
\begin{align*}
    \Pp\pt{\verti{f_n(X_n) - \E\br{f_n(X_n)}}\geq t} \leq C e^{- \frac 1 C \pt{\frac {t}{ \sigma_n}}^q}.
\end{align*}
We denote $X_n \propto \cal E_q(\sigma_n)$ this notion of concentration.
\end{defi}
As explained in the beginning of this paper, we will omit most of the $n$ indices for a better readability. We also use the following generalization of the definition: $X \propto \cal E_q(\sigma_n) + \cal E_{q'}(\sigma_n')$ means that the above property holds with the right hand term side of the inequality replaced by $ C e^{- \frac 1 C \pt{\frac {t}{ \sigma_n}}^q} +  C e^{- \frac 1 C \pt{\frac {t}{ \sigma'_n}}^{q'}}$.

For practical purposes in this paper, we only consider $\R^n$ endowed with the Euclidean norm, or equivalently $\R^{p \times n}$ endowed with the Frobenius norm. In this setting, any standard Gaussian vector is concentrated independently of its dimension: if the entries of $X$ are i.i.d. sampled from a standard Gaussian random variable, then $ X \propto \cal E_2(1)$ (\cite{ledoux2001concentration}). The class of Lipschitz concentrated random vectors goes way beyond Gaussian vectors, see \cite{ledoux2001concentration}, and \cite{talagrand1995concentration} and \cite{TaoBook} for more examples.

We deduce immediately from the definition the following property: 
\begin{prop}\label{LipschitzTransformsConcentration} If $X_n \propto \cal E_q(\sigma_n)$ and $\varphi_n$ is a sequence of  $k_{\varphi_n}$-Lipschitz functions on the support of $X_n$, then $\varphi_n(X_n) \propto \cal E_q(k_{\varphi_n}\sigma_n)$ with new concentration constants that do not depend on $\varphi_n$.\end{prop}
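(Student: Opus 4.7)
The plan is to reduce the concentration of $\varphi_n(X_n)$ to that of $X_n$ itself by a simple pre-composition argument. Let $(F_n,\vertii{\cdot})$ denote the normed target space of $\varphi_n$, and fix an arbitrary sequence of $1$-Lipschitz test maps $h_n : F_n \to \C$. Since $\varphi_n$ is $k_{\varphi_n}$-Lipschitz on the support of $X_n$, the composition $h_n \circ \varphi_n$ is $k_{\varphi_n}$-Lipschitz on that support, and extending it off the support via McShane's theorem gives a $k_{\varphi_n}$-Lipschitz function on the whole space $E_n$. Thus $\tilde f_n := (h_n \circ \varphi_n)/k_{\varphi_n}$ is a $1$-Lipschitz function from $E_n$ to $\C$.

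Applying Definition \ref{DefConcentration} to the sequence $\tilde f_n$ yields, with the constant $C$ provided by the concentration of $X_n$ and for every $s \geq 0$:
\[ \Pp\pt{\verti{\tilde f_n(X_n) - \E\br{\tilde f_n(X_n)}} \geq s} \leq C e^{-\frac 1 C (s/\sigma_n)^q}. \]
Multiplying the event by $k_{\varphi_n}$ and performing the change of variable $t = k_{\varphi_n} s$ rewrites the left-hand side as the deviation probability of $h_n(\varphi_n(X_n))$ around its mean, and the right-hand side as $C e^{-\frac 1 C (t/(k_{\varphi_n}\sigma_n))^q}$. Since $h_n$ was arbitrary among $1$-Lipschitz maps on $F_n$, this is precisely the statement $\varphi_n(X_n) \propto \cal E_q(k_{\varphi_n}\sigma_n)$, with a concentration constant inherited from that of $X_n$ and independent of $\varphi_n$.

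The argument is essentially a one-line observation once the composition rule for Lipschitz functions and the homogeneity of the concentration inequality in the rescaling $s \mapsto s/k_{\varphi_n}$ are combined; there is no genuine obstacle. The only technical point requiring a word of care is the globality of the test functions in Definition \ref{DefConcentration}, which is handled either by a McShane extension or by noting that the probability in question only depends on values of the test function on the support of $X_n$.
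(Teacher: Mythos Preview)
Your proof is correct and is precisely the unwinding of the phrase ``We deduce immediately from the definition'' that the paper uses in lieu of a proof. The paper offers no explicit argument beyond this remark, and your composition-and-rescaling reasoning is exactly the intended one-line observation; the care you take with the McShane extension (or, equivalently, your remark that only values on the support matter) is the only nuance worth mentioning, and you handle it appropriately.
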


Let us mention a simple link between observable diameters and variance for random variables.

\begin{prop}\label{VarianceConcentration} If a random variable $\xi$ is $\cal E_q(\sigma_n )$ concentrated, then \newline
$\verti{\xi - \E[\xi]}~\leq~O\pt{\sigma_n \pt{ \log n}^{\frac 1 q}}$ a.s. and $\Var[\xi ] \leq O(\sigma_n^2)$. These bounds only depend on the concentration constant of $\xi$, not on the whole law of $\xi$.
\end{prop}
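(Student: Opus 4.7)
The plan is to derive both bounds directly from the exponential tail inequality provided by the concentration hypothesis, which by Definition \ref{DefConcentration} reads $\Pp(|\xi_n - \E[\xi_n]| \geq t) \leq C e^{-(t/\sigma_n)^q/C}$ for all $t \geq 0$, with $C$ depending only on the concentration constant of $\xi$.

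For the variance bound, I would use the layer cake representation: writing $\Var[\xi_n] = \E[(\xi_n - \E[\xi_n])^2] = \int_0^\infty \Pp(|\xi_n - \E[\xi_n]| > \sqrt s)\, ds$, and plugging in the tail estimate gives
\[
\Var[\xi_n] \leq \int_0^\infty C \, e^{-(\sqrt s / \sigma_n)^q / C}\, ds.
\]
The change of variables $u = \sqrt s / \sigma_n$ turns this into $2 C \sigma_n^2 \int_0^\infty u \, e^{-u^q/C}\, du$, where the remaining integral is a finite constant depending only on $q$ and $C$. This yields $\Var[\xi_n] \leq O(\sigma_n^2)$ with an implicit constant depending only on $q$ and $C$, as required.

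For the almost sure bound, the plan is a standard Borel--Cantelli argument. Fix $M > 0$ to be chosen, set $t_n := M \sigma_n (\log n)^{1/q}$, and apply the concentration inequality to obtain
\[
\Pp\bigl(|\xi_n - \E[\xi_n]| \geq t_n\bigr) \leq C \, e^{-M^q \log n / C} = C \, n^{-M^q / C}.
\]
Choosing $M$ large enough that $M^q / C > 1$ makes the right-hand side summable in $n$. By Borel--Cantelli, almost surely $|\xi_n - \E[\xi_n]| < t_n$ for all but finitely many $n$, which gives the claimed bound $|\xi_n - \E[\xi_n]| \leq O(\sigma_n (\log n)^{1/q})$ a.s.

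Neither step presents a genuine obstacle; the only point worth being careful about is that in both arguments the implicit constants are functions of $C$ and $q$ alone, confirming the claim that the bounds depend solely on the concentration constant of $\xi$ and not on its full law.
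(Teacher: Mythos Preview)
Your proof is correct and follows essentially the same approach as the paper: Borel--Cantelli with $t_n$ a constant multiple of $\sigma_n(\log n)^{1/q}$ for the almost sure bound, and the layer-cake formula plus a change of variables for the variance. The paper even makes the variance constant explicit as $\tfrac{2C^{1+2/q}}{q}\Gamma(2/q)$, but this is purely cosmetic.
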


\begin{proof} For the first assertion, let us choose $t_n = \sigma_n \pt{2 C \log n}^{\frac 1 q}$ in the concentration inequality. We obtain:
\begin{align*}
    \Pp\pt{\verti{\xi - \E[\xi]} \geq t_n} \leq C e^{- \frac 1 C \pt{\frac {t_n}{\sigma_n}}^q} = \frac C {n^2}.
\end{align*}
This bound is summable, so Borel Cantelli lemma implies that $\verti{\xi - \E[\xi]} \leq t_n$ a.s.

For the second assertion, we integrate the concentration inequality:
\begin{align*}
  \Var[\xi ]  &= \int_0^\infty  \Pp\left(\verti{\xi - \E[\xi]}^2 \geq t \right) dt \\
    &\leq C \int_0^\infty   \rm{exp}\pt{- \frac 1 C \frac {t^{\frac q 2}}{ \sigma_n^q}} dt \\
    &= \frac{2 C^{1 + \frac 2 q} }{q} \Gamma\pt{\frac 2 q } \sigma_n^2
\end{align*} \end{proof}

 Thanks to a classic argument of $\epsilon$-nets (\cite{TaoBook}), we also  have a control on the spectral norm of concentrated random matrices:

\begin{prop}\label{ConcentrationSpectralNorm}
If a random matrix $X_n \in \R^{p_n \times n}$ is $ \propto \cal E_2(\sigma_n)$ concentrated, then $\vertiii{X_n - \E[X_n]} \leq O((n+p_n)^\half \sigma_n)$ a.s.
\end{prop}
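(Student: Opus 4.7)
The plan is to prove this proposition via the classical $\epsilon$-net discretization of the unit spheres, combined with the concentration of scalar bilinear functionals at each net point, and finished off by Borel--Cantelli, following the approach alluded to by \cite{TaoBook}.

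First, I would exploit the variational formula
\begin{align*}
\vertiii{M} = \sup_{\bf u \in \Ss^{p_n - 1},\, \bf v \in \Ss^{n - 1}} \bf u^\top M \bf v,
\end{align*}
and fix $\epsilon$-nets $\cal N_p \subset \Ss^{p_n - 1}$ and $\cal N_n \subset \Ss^{n - 1}$ for some small $\epsilon \in (0, 1/2)$, say $\epsilon = 1/4$. Standard volumetric estimates bound their cardinalities by $9^{p_n}$ and $9^{n}$ respectively, and the usual triangle-inequality trick yields
\begin{align*}
\vertiii{X_n - \E[X_n]} \leq (1 - 2 \epsilon)^{-1} \max_{(\bf u, \bf v) \in \cal N_p \times \cal N_n} \verti{ \bf u^\top (X_n - \E[X_n]) \bf v }.
\end{align*}

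Second, for each fixed pair $(\bf u, \bf v)$ on the two spheres, the map $M \mapsto \bf u^\top M \bf v$ on $\R^{p_n \times n}$ is linear with gradient $\bf u \bf v^\top$ of Frobenius norm $\vertii{\bf u} \, \vertii{\bf v} = 1$, hence $1$-Lipschitz with respect to $\vertif{\cdot}$. Proposition \ref{LipschitzTransformsConcentration} then implies that the centered real-valued variable $\bf u^\top (X_n - \E[X_n]) \bf v$ is $\cal E_2(\sigma_n)$ concentrated around zero, so for all $t \geq 0$,
\begin{align*}
\Pp\pt{ \verti{ \bf u^\top (X_n - \E[X_n]) \bf v } \geq t } \leq C \, e^{- \frac 1 C (t / \sigma_n)^2 }.
\end{align*}
A union bound over the product net $\cal N_p \times \cal N_n$, of cardinality at most $9^{n + p_n}$, combined with the reduction above, gives
\begin{align*}
\Pp\pt{ \vertiii{X_n - \E[X_n]} \geq 2 t } \leq C \cdot 9^{n + p_n} \, e^{- \frac 1 C (t / \sigma_n)^2}.
\end{align*}

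Finally, I would choose $t_n := A \sigma_n (n + p_n)^{\half}$ with $A$ a constant (depending only on $C$) large enough that the exponential factor dominates the entropy term $9^{n + p_n}$ and makes the resulting probabilities summable in $n$. Borel--Cantelli then delivers $\vertiii{X_n - \E[X_n]} \leq 2 t_n = O\pt{(n + p_n)^{\half} \sigma_n}$ almost surely, which is the claim. I do not anticipate any serious obstacle: the $\epsilon$-net reduction is textbook high-dimensional probability, and the rest is a direct application of the concentration hypothesis combined with Borel--Cantelli.
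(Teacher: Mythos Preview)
Your proposal is correct and follows essentially the same $\epsilon$-net argument as the paper's proof: discretize the two unit spheres, use that each bilinear form $M \mapsto \bf u^\top M \bf v$ is $1$-Lipschitz in Frobenius norm to get sub-Gaussian tails, take a union bound over the net, and conclude by Borel--Cantelli. The only cosmetic differences are that the paper covers the unit \emph{balls} with $1/3$-balls (obtaining a factor $3$ instead of your $(1-2\epsilon)^{-1}=2$) and makes the summability explicit via $t_n = C^{1/2}\bigl((n+p)\log 9 + 2\log n\bigr)^{1/2}$ rather than appealing to a sufficiently large constant $A$.
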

\begin{proof} Upon centering and rescaling, we can assume without loss of generality that $X$ is centered and $\propto \cal E_2(1)$ concentrated.

 Let us consider a covering of the unit Euclidean ball of $\R^{p}$ with $9^p$ balls of radius $ 1/3$, centered in $u_1, \dots, u_{9^p}$ vectors belonging to the unit Euclidean ball. Similarly let us consider a covering of the unit Euclidean ball of $\R^{n}$ with $9^n$ balls of radius $1/3$, centered in $v_1, \dots, v_{9^n}$ vectors belonging to the unit Euclidean ball.
The applications $M \mapsto u_i^\top M v_j$ all are $1$-Lipschitz with respect with the Frobenius norm. From Propositions \ref{LipschitzTransformsConcentration} and \ref{VarianceConcentration} there is  $C>0$ such that for any $i$, $j$ and $t\geq0$:
\[ \Pp\pt{\verti{ u_i^\top X v_j}\geq t} \leq C e^{-\frac {t^2} C }.\]
 A union bound over the pairs $(i,j)$ shows that with $t_n =  C^\half \pt{(n+p)\log 9 + 2 \log n}^\half  $:\[ \Pp\pt{\sup_{i,j} |u_i^\top X v_j| \geq t_n} \leq C 9^{n+p} e^{-\frac {t_n^2} C } =  \frac C {n^2} .\]
This bound is summable, so by Borel-Cantelli lemma, $\sup_{i,j} |u_i^\top X v_j| \leq t_n$ for $n$ large enough a.s.

For any $u \in \R^p$ and $v \in \R^n$ unit-normed vectors, we can find $i$ and $j$ such that $\|u-u_i\|$ and $\|v - v_j\| \leq \frac 1 3$, hence:
\begin{align*}
 | u^\top X v | &\leq \left| u_i^\top X v_j \right| +\left| (u-u_i)^\top X v_j \right| + \left|u^\top X (v-v_j) \right| \\
 &\leq |u_i^\top X v_j |+ \|u-u_i\|\vertiii{ X} \| v_j\| + \|u^\top\| \vertiii{X} \|v-v_j\| \\
 &\leq \sup_{i,j} |u_i^\top X v_j| + \frac 2 3 \vertiii{X} .
\end{align*}
We deduce that $\vertiii{X} = \sup_{\|u\|=\|v\|=1} | u^\top X v | 
 \leq \sup_{i,j} |u_i^\top X v_j| + \frac 2 3 \vertiii{X}$, thus $\vertiii{X} \leq 3 \sup_{i,j} |u_i^\top X v_j| \leq 3 t_n \leq O((n+p)^\half)$ a.s.
    
\end{proof}

The next proposition, sometimes referred to as Hanson-Wright type inequality after \cite{hanson1971bound}, establishes the concentration of quadratic forms and will prove itself to be crucial in the analysis of resolvents. A proof of this result can be found in \cite{adamczak2015}, Theorem 2.4.

\begin{prop}[Hanson-Wright] \label{HansonWright} Let $x \in \R^n$ be a random vector, such that $x\propto \cal E_2(1)$ and $\vertii{\E[x]}$ is bounded. Then uniformly in any deterministic matrix $A \in \C^{n \times n}$, we have $x^\top A x \propto \cal E_2(\vertif{A}) + \cal E_1(\vertiii{A}) $, in the sense that the new concentration constants can be taken uniformly in $A$.
\end{prop}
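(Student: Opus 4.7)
The plan is to prove the Hanson--Wright-type inequality by reducing the quadratic form $x^\top A x$ to a weighted sum of squares of $1$-Lipschitz functionals of $x$, and then handling the resulting expression via a two-scale Lipschitz truncation that separates a Gaussian regime from a subexponential regime. The reason both tails appear is that $x \mapsto x^\top A x$ is not globally Lipschitz: it is locally Lipschitz with a constant controlled by $\vertiii{A}$ on compact sets, while the bulk of the deviation is driven by $\vertif{A}$.

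First I would perform several reductions. Writing $A = \operatorname{Re} A + i \operatorname{Im} A$, treating each part separately and noting that $\vertif{\operatorname{Re} A}, \vertif{\operatorname{Im} A} \leq \vertif{A}$ and similarly for $\vertiii{\cdot}$, we may assume $A$ is real. Since $x^\top A x = x^\top \tfrac{A + A^\top}{2} x$ and symmetrization does not increase either norm, we may further assume $A$ is symmetric. Diagonalize $A = U \Lambda U^\top$ with $U$ orthogonal and set $y = U^\top x$; because $U^\top$ is a $1$-Lipschitz isometry, $y \propto \cal E_2(1)$ and $\vertii{\E y} = \vertii{\E x}$ stays bounded. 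Writing $m = \E y$ and $z = y - m$, we obtain
\[ x^\top A x = m^\top \Lambda m + 2 m^\top \Lambda z + z^\top \Lambda z. \]
The first term is deterministic. The middle term is linear in $z$ with Lipschitz constant $\vertii{2 \Lambda m} \leq 2 \vertiii{A}\, \vertii{m} = O(\vertiii{A})$, so by Proposition \ref{LipschitzTransformsConcentration} it is $\cal E_2(\vertiii{A})$-concentrated, which is absorbed into the desired $\cal E_2(\vertif{A})$ bound since $\vertiii{A} \leq \vertif{A}$.

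The heart of the proof is the quadratic term $g(z) := z^\top \Lambda z = \sum_i \lambda_i z_i^2$, whose gradient $\nabla g(z) = 2 \Lambda z$ has Euclidean norm $2 \vertii{\Lambda z}$, so $g$ is only locally Lipschitz. To control $\vertii{\Lambda z}$, I would note that $z \mapsto \vertii{\Lambda z}$ is itself $\vertiii{\Lambda}$-Lipschitz, hence $\cal E_2(\vertiii{A})$-concentrated by Proposition \ref{LipschitzTransformsConcentration}; moreover each coordinate $z_i = (U e_i)^\top(x - \E x)$ is a $1$-Lipschitz function of $x$, so Proposition \ref{VarianceConcentration} gives $\E z_i^2 = O(1)$ and therefore
\[ \E \vertii{\Lambda z}^2 = \sum_i \lambda_i^2 \E z_i^2 \leq C \vertif{A}^2. \]
Combined with Jensen's inequality and the concentration of $\vertii{\Lambda z}$, this yields, for every $s \geq 0$, the tail estimate $\vertii{\Lambda z} \leq C_1 \vertif{A} + \vertiii{A}\, s$ with probability at least $1 - C e^{-s^2/C}$.

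Finally I would combine these estimates via a truncation argument. Given $t > 0$, set $R = C_1 \vertif{A} + \vertiii{A}\, s$ with $s$ to be optimized. On the event $\{\vertii{\Lambda z} \leq R\}$ the map $g$ is $2R$-Lipschitz, so extending it to a global $2R$-Lipschitz function $\tilde g$ via the McShane extension and applying Proposition \ref{LipschitzTransformsConcentration} yields a Gaussian deviation estimate for $\tilde g$ at scale $R$. A union bound with the tail of $\vertii{\Lambda z}$, together with a control of $\verti{\E \tilde g - \E g}$ coming from the same tail, followed by the choice $s = t^{1/2}/\vertif{A} + t/\vertiii{A}$, produces
\[ \Pp\bigl(\verti{g(z) - \E g(z)} \geq t\bigr) \leq C e^{-t^2/(C \vertif{A}^2)} + C e^{-t/(C \vertiii{A})}, \]
which is exactly the $\cal E_2(\vertif{A}) + \cal E_1(\vertiii{A})$ concentration claimed. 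The main technical obstacle I anticipate is the comparison between $\E \tilde g$ and $\E g$: since $g$ has no global Lipschitz constant, one must carefully show that the rare event $\{\vertii{\Lambda z} > R\}$ contributes only a term controlled by the exponential tail of $\vertii{\Lambda z}$, which is precisely the mechanism by which the subexponential $\cal E_1(\vertiii{A})$ piece emerges. Uniformity in $A$ is automatic at every step since all constants depend only on $\vertif{A}$, $\vertiii{A}$, $\vertii{\E x}$ and the concentration constant of $x$.
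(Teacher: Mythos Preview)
The paper does not supply its own proof here; it simply cites \cite{adamczak2015}, Theorem~2.4. Your proposal is therefore an independent argument. Your localization strategy---control $\vertii{\Lambda z}$ via Lipschitz concentration, restrict $g$ to the sublevel set $B_R=\{\vertii{\Lambda z}\le R\}$ where it becomes $2R$-Lipschitz, extend, and optimize over $R$---is a legitimate and recognizable route to Hanson--Wright from Lipschitz concentration. It differs from Adamczak's argument, which proceeds through $L^p$ moment estimates rather than pointwise truncation; his approach only needs the \emph{convex} concentration property, while yours is more hands-on once the recentering issue below is handled.

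Two points in your execution need repair. First, the optimization: your choice $s = t^{1/2}/\vertif{A} + t/\vertiii{A}$ does not balance the two exponentials. With $R = C_1\vertif{A} + \vertiii{A}\,s$ you are trading $e^{-s^2/C}$ against $e^{-t^2/(CR^2)}$, and the correct choice is $s \asymp \min\bigl(t/\vertif{A},\,\sqrt{t/\vertiii{A}}\bigr)$, splitting into two regimes at the threshold $t \sim \vertif{A}^2/\vertiii{A}$; your formula makes $R$ grow linearly in $t$, so the second exponential stalls at a constant. Second, and more substantively, the mean shift $\verti{\E\tilde g_R - \E g}$ cannot be bounded ``from the same tail'' by a naive Cauchy--Schwarz: any pointwise estimate on $\verti{g(z)-\tilde g_R(z)}$ over $B_R^c$ brings in $\vertii{z}$, which is typically of order $n^{1/2}$, and dimension leaks into the constants. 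The clean fix is to concentrate around the \emph{median} of $g$ instead. Since $\tilde g_R=g$ on $B_R$ and $\Pp(B_R)\ge 3/4$ for the relevant $R$, the median of $g$ is a $1/4$-quantile of $\tilde g_R$, hence lies within $O(R)$ of the median of $\tilde g_R$; the sub-Gaussian tail of $\tilde g_R$ then transfers to $g$ directly, and the passage from median to mean is a one-line integration. With these two corrections your argument goes through.
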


We end this section with a simple lemma to bound the variance of a product of random variables when one of them is a.s. bounded. 

\begin{prop}[Variance of a product] \label{VarianceProduct} Let $s$ and $t$ be $L^2$ complex random variables with $\verti{s}$ a.s. bounded by $\vertii{s}_\infty > 0$. Then: 
\[ \Var[s t]  \leq 2 \vertii{s}_\infty^2  \Var[t]  + 2\verti{ \E[t]}^2 \Var[s] \]
\end{prop}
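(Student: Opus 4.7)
The plan is to split $st$ into a ``centered-in-$t$'' part and a ``deterministic-in-$t$'' part, and control the variance of each piece separately.

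Concretely, I would write the decomposition
\[ s t = s\bigl(t - \E[t]\bigr) + s\,\E[t], \]
and then apply the elementary inequality $\Var[A+B] \leq 2\Var[A] + 2\Var[B]$, which follows from $|a+b|^2 \leq 2|a|^2 + 2|b|^2$ applied to the complex random variables $A - \E[A]$ and $B - \E[B]$. This reduces the problem to bounding $\Var\bigl[s(t-\E[t])\bigr]$ and $\Var\bigl[s\,\E[t]\bigr]$.

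For the first piece, I would use $\Var[Z] \leq \E[|Z|^2]$ (valid for any complex $L^2$ variable) together with the a.s. bound $|s|\leq \|s\|_\infty$ to get
\[ \Var\bigl[s(t-\E[t])\bigr] \leq \E\bigl[|s|^2\,|t - \E[t]|^2\bigr] \leq \|s\|_\infty^2 \,\Var[t]. \]
For the second piece, $\E[t]$ is a deterministic scalar, so it factors out of the variance:
\[ \Var\bigl[s\,\E[t]\bigr] = |\E[t]|^2 \,\Var[s]. \]
Combining these with the factor $2$ from the splitting inequality yields the claimed bound. There is no real obstacle here; the only minor point requiring a line of justification is the complex analogue of $\Var[A+B] \leq 2\Var[A] + 2\Var[B]$ and the use of $\Var[Z] \leq \E[|Z|^2]$, both of which are immediate from the definition $\Var[Z] = \E[|Z - \E Z|^2]$.
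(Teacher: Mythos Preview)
Your proof is correct and follows essentially the same approach as the paper: the same decomposition $st = s(t-\E[t]) + s\,\E[t]$, the same inequality $\Var[A+B]\leq 2\Var[A]+2\Var[B]$, and the same bounds on each piece.
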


\begin{proof} We decompose the product into $s t = s (t - \E[t]) + s \E[t]$ and use the inequality $\Var[a + b] \leq 2 \Var[a] + 2 \Var[b]$:
\begin{align*}
    \Var[s t] &= \Var\br{s (t - \E[t]) + s \E[t]} \\
    &\leq 2  \Var\br{s (t - \E[t])} + 2  \Var\br{ s \E[t]} \\
    &\leq 2 \E\br{|s|^2 \verti{t - \E[t]}^2} + 2\verti{ \E[t]}^2 \Var[s] \\
    &\leq 2 \vertii{s}_\infty^2  \Var[t]  + 2\verti{ \E[t]}^2 \Var[s].
\end{align*}
\end{proof}

\newpage
\section{Properties of resolvents matrices}

This section deals with the resolvent of sample covariance matrices, and introduces two important companion objects: the Leave One Out (LOO) resolvent and the co-resolvent.

Given a matrix $M \in \R^{p \times p}$ and a spectral parameter $z \in \C$ that is not an eigenvalue of $M$, the resolvent of $M$ is the matrix $\cal G_M(z) = (M- z I_p)^{-1}$. We will often discard the $z$ in the notations for better readability.

By resolvent identity we refer to the relation between invertible matrices $A-B = A(B^{-1}-A^{-1})B$,  and its immediate corollary for resolvents $\cal G_M - \cal G_N = \cal G_M (N-M) \cal G_N $.

\subsection{Sample covariance resolvent} 

If $X \in \R^{p_n \times n}$ is a rectangular matrix, we set its sample covariance $K := \frac 1 n X X^\top $. Given a spectral parameter $z \in \R^{*-}$ or $\C^+$, we define the resolvent matrix $\cal G := \cal G_K(z)$. In other terms:
\[ \cal G =  \pt{K- z I_p}^{-1} = \pt{ \frac 1 { n} X X^\top - z I_p}^{-1} \] 

To unify the notations we let $\eta = |z|^{-1}$ when $z \in \R^{*-}$ and $\eta = { \Im(z)}^{-1}$ when $z \in \C^+$. Note that we always have $\eta |z| \geq 1$.

Let us establish some elementary properties of the sample covariance resolvent. 

\begin{prop}\label{BoundG}\label{BoundGX} $\cal G$ is well defined, $\vertiii{\cal G} \leq \eta$ and  $  \vertiii{\cal G X}
    \leq n^\half \pt{  2^\half \eta |z|^\half }$.\end{prop}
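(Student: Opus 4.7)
The plan is to separate the two cases $z \in \R^{*-}$ and $z \in \C^+$ just enough to establish well-posedness and the bound $\vertiii{\cal G} \leq \eta$, then handle the $\vertiii{\cal G X}$ estimate in a unified way via an algebraic manipulation.

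For well-definedness and the first bound, I would start by noting that $K = \frac{1}{n} X X^\top$ is real symmetric positive semi-definite, so $\Sp K \subset [0, \infty)$. If $z \in \R^{*-}$, the eigenvalues of $K - zI_p$ are of the form $\lambda + |z|$ with $\lambda \geq 0$, hence bounded below by $|z| > 0$; this gives invertibility and $\vertiii{\cal G} \leq |z|^{-1} = \eta$. If $z \in \C^{+}$, the eigenvalues are $\lambda - z$ with $\lambda \geq 0$, so $|\lambda - z| \geq \Im(z) > 0$; again $\cal G$ is well defined with $\vertiii{\cal G} \leq \Im(z)^{-1} = \eta$. In both cases one also has $\eta |z| \geq 1$, a fact that will be used at the end.

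For the bound on $\vertiii{\cal G X}$, the key identity is the resolvent-type relation $\cal G K = \cal G(K - zI_p) + z \cal G = I_p + z \cal G$. Since $X X^\top = nK$ and $X$ is real, I would write
\[
\vertiii{\cal G X}^2 = \vertiii{(\cal G X)(\cal G X)^{\dg}} = \vertiii{\cal G X X^\top \cal G^{\dg}} = n \vertiii{\cal G K \cal G^{\dg}} = n \vertiii{\cal G^{\dg} + z \cal G \cal G^{\dg}}.
\]
The triangle inequality and submultiplicativity of the spectral norm then give
\[
\vertiii{\cal G X}^2 \leq n \pt{\vertiii{\cal G^{\dg}} + |z|\, \vertiii{\cal G}\,\vertiii{\cal G^{\dg}}} \leq n \pt{\eta + |z| \eta^2}.
\]
Using $\eta |z| \geq 1$, i.e.\ $\eta \leq |z| \eta^2$, the right-hand side is bounded by $2 n |z| \eta^2$, whence $\vertiii{\cal G X} \leq n^{\half}(2^{\half} \eta |z|^{\half})$.

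The only genuinely substantive step is the algebraic rewriting $\cal G K = I_p + z \cal G$; everything else is routine spectral and norm bookkeeping, so I do not expect any real obstacle beyond making sure the case distinction for $\eta$ is handled uniformly through the inequality $\eta |z| \geq 1$.
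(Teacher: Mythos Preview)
Your proof is correct and follows essentially the same approach as the paper: both use the identity $\cal G K = I_p + z \cal G$ together with the spectral bound $\vertiii{\cal G} \leq \eta$ and the inequality $\eta|z|\geq 1$ to reach $\vertiii{\cal G X}^2 \leq 2n\eta^2|z|$. The only cosmetic difference is that the paper splits $\vertiii{\cal G X X^\top \cal G^\dg} \leq \vertiii{n\cal G K}\cdot\vertiii{\cal G^\dg}$ before substituting, whereas you substitute first and then apply the triangle inequality.
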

    
\begin{proof} The eigenvalues of $K$ are real non negative, thus all eigenvalues of $K-zI_p$ are bounded from below by $d(z,\R^+)$, and $K-zI_p$ is invertible with $\vertiii{\pt{K-zI_p}^{-1}}=\vertiii{\cal G} \leq \eta$. From the identity $\cal G  K   = I_p + z \cal G $ we deduce that $ \vertiii{\cal G X}^2 = \vertiii{\pt{\cal G X X^\top }\cal G^\dg} \leq\vertiii{\cal G n K}  \cdot \vertiii{\cal G} \leq n \pt{1+ \eta {|z|}}\eta \leq  {2n\eta^2|z|} $. \end{proof}

In the following proposition $\Im(M)$ denotes the imaginary part of a complex matrix $M$.

\begin{prop}\label{ImG} 
 $\Im\pt{\cal G} =  \Im(z) \cal G \cal G^\dg$ and $\Im\pt{z \cal G} =  \Im(z) K \cal G \cal G^\dg$. In particular $\Im\pt{\cal G}$ and  $\Im\pt{z \cal G}$ are positive semi-definite.
\end{prop}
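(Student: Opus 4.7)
The plan is to exploit the resolvent identity to compute $\mathcal{G}-\mathcal{G}^\dagger$ explicitly. Since $K$ is real symmetric, we have $\mathcal{G}^{-1} = K-zI_p$ and $(\mathcal{G}^\dagger)^{-1} = K - \bar z I_p$, so
\[
(\mathcal{G}^\dagger)^{-1} - \mathcal{G}^{-1} = (z-\bar z) I_p = 2i\, \Im(z)\, I_p.
\]
Applying the resolvent identity $\mathcal{G} - \mathcal{G}^\dagger = \mathcal{G}\bigl((\mathcal{G}^\dagger)^{-1} - \mathcal{G}^{-1}\bigr)\mathcal{G}^\dagger$ yields $\mathcal{G} - \mathcal{G}^\dagger = 2i\,\Im(z)\, \mathcal{G}\mathcal{G}^\dagger$, which divided by $2i$ gives the first identity $\Im(\mathcal{G}) = \Im(z)\, \mathcal{G}\mathcal{G}^\dagger$.

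For the second identity, I would first derive the algebraic relation $z\mathcal{G} = \mathcal{G}K - I_p$ by multiplying $\mathcal{G}(K - zI_p) = I_p$ on both sides (using that $K$ and $\mathcal{G}$ commute). Taking the Hermitian transpose, $\bar z \mathcal{G}^\dagger = \mathcal{G}^\dagger K - I_p$. Subtracting the two lines the identity matrices cancel and $K$ factors on the right:
\[
z\mathcal{G} - \bar z \mathcal{G}^\dagger = (\mathcal{G} - \mathcal{G}^\dagger)\, K.
\]
Dividing by $2i$ and inserting the first identity gives $\Im(z\mathcal{G}) = \Im(\mathcal{G})\, K = \Im(z)\, \mathcal{G}\mathcal{G}^\dagger K$. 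Finally, since $K$, $\mathcal{G}$, $\mathcal{G}^\dagger$ all commute (they are functions of $K$), the factor $K$ can be moved to the left, yielding $\Im(z\mathcal{G}) = \Im(z)\, K\mathcal{G}\mathcal{G}^\dagger$.

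Positive semi-definiteness is then immediate. For the first expression, $\mathcal{G}\mathcal{G}^\dagger$ is Hermitian positive semi-definite by construction and $\Im(z) \geq 0$. For the second, using that $K$ commutes with both $\mathcal{G}$ and $\mathcal{G}^\dagger$, one can write $K\mathcal{G}\mathcal{G}^\dagger = (K^{1/2}\mathcal{G})(K^{1/2}\mathcal{G})^\dagger$, which is again Hermitian positive semi-definite. The case $z \in \R^{*-}$ is trivial since $\Im(z)=0$ makes both expressions vanish.

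No step is really an obstacle here — the whole argument is a short algebraic manipulation relying on the resolvent identity and the commutation of $K$ with its functions. The only subtlety worth flagging is that the definition of $\Im(M)$ used is the Hermitian one, $\Im(M) = (M - M^\dagger)/(2i)$, which is what makes the output automatically Hermitian and fits the intended positive semi-definiteness conclusion.
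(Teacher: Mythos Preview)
Your proof is correct and follows essentially the same route as the paper: apply the resolvent identity to compute $\cal G - \cal G^\dg$, then use $z\cal G = K\cal G - I_p$ to handle $\Im(z\cal G)$, and conclude positivity from commutation. One minor remark: the paper defines $\Im(M)$ as the entrywise imaginary part $(M-\ov M)/(2i)$ rather than the Hermitian one; since $K$ is real symmetric, $\cal G$ is complex symmetric and hence $\ov{\cal G} = \cal G^\dg$, so the two notions coincide here and your argument goes through unchanged.
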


\begin{proof} Using the resolvent identity, $2i \Im(\cal G) = \cal G - \ov{\cal G} =  \cal G (- \ov z I_p + z I_p ) \ov{\cal G} = 2i \Im(z) \cal G \cal G^\top$. Given that $z \cal G = K \cal G - I_p$ we also get $ \Im(z \cal G)  = K \Im(\cal G) = \Im(z) K \cal G \cal G^\top$. The matrices $\cal G \cal G^\top$ and $K$ commute and are both positive semi-definite, which achieves the proof.
\end{proof}

The resolvent map $M \mapsto (M-zI_p)^{-1}$ is in general not Lipschitz with respect to the Frobenius norm when $\vertiii{M}$ is not bounded. In the specific case of the sample covariance resolvent however, we can always retrieve a Lipschitz property that will greatly simplify all concentration analysis.

\begin{prop} \label{LipG} The map $X \mapsto \cal G(X) = \pt{  \frac 1 n  X X^\top - z I_p}^{-1}$ is Lipschitz with respect to the Frobenius norm with parameter $n^{-\half} \pt{ {2^{\frac 3 2} \eta^2 |z|^\half}{ } }$.
\end{prop}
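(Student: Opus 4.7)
The natural strategy is to use the resolvent identity applied to the two sample covariance matrices $K = \frac{1}{n} XX^\top$ and $K' = \frac{1}{n} X'X'^\top$, and then crucially exploit the bound on $\vertiii{\cal G X}$ from Proposition \ref{BoundGX} rather than trying to bound $\vertiii{X}$ itself (which is what makes the usual resolvent map non-Lipschitz, since $X$ is precisely the variable along which we want to differentiate).

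Concretely, first I would write
\[
\cal G(X) - \cal G(X') \;=\; \cal G(X)\bigl(K' - K\bigr)\cal G(X') \;=\; \tfrac{1}{n}\cal G(X)\bigl(X'X'^\top - XX^\top\bigr)\cal G(X'),
\]
and then perform the standard telescoping
\[
X'X'^\top - XX^\top \;=\; X'(X'-X)^\top + (X'-X)X^\top,
\]
so that
\[
\cal G(X) - \cal G(X') \;=\; \tfrac{1}{n}\bigl(\cal G(X) X'\bigr)(X'-X)^\top \cal G(X') \,+\, \tfrac{1}{n}\cal G(X)(X'-X)\bigl(X^\top\cal G(X')\bigr).
\]

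Taking Frobenius norms and applying the inequality $\vertif{ABC} \leq \vertiii{A}\,\vertif{B}\,\vertiii{C}$ to each of the two terms, I can bound
\[
\vertif{\cal G(X) - \cal G(X')} \;\leq\; \tfrac{1}{n}\Bigl(\vertiii{\cal G(X)X'}\,\vertiii{\cal G(X')} + \vertiii{\cal G(X)}\,\vertiii{X^\top\cal G(X')}\Bigr)\vertif{X-X'}.
\]
Then Proposition \ref{BoundG} provides $\vertiii{\cal G(X)}, \vertiii{\cal G(X')} \leq \eta$, and Proposition \ref{BoundGX} applied to both $X$ and $X'$ gives $\vertiii{\cal G(X)X'}$ and $\vertiii{X^\top\cal G(X')}$ each bounded by $n^{1/2}\,2^{1/2}\eta|z|^{1/2}$ (the second one follows from the first by transposition, noting that $\vertiii{X^\top\cal G(X')} = \vertiii{\cal G(X')^\dagger X} \leq \vertiii{\cal G(X')X}$ up to reinterpretation, or equally well by repeating the argument of Proposition \ref{BoundGX} with $X$ and $X'$ swapped in the appropriate places). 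Combining these bounds yields
\[
\vertif{\cal G(X) - \cal G(X')} \;\leq\; \tfrac{2}{n}\cdot n^{1/2}\cdot 2^{1/2}\eta^2|z|^{1/2}\,\vertif{X-X'} \;=\; n^{-1/2}\bigl(2^{3/2}\eta^2|z|^{1/2}\bigr)\vertif{X-X'},
\]
which is the claimed Lipschitz constant.

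There is no real obstacle here; the only subtle point is noticing that the Frobenius norm must be split so that each factor with an unbounded $X$ or $X'$ is absorbed into a $\cal G X$ factor, which has the uniform spectral bound of order $n^{1/2}$. This is precisely the observation that makes this specific resolvent Lipschitz even though $X$ ranges over an unbounded set.
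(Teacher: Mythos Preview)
There is a genuine gap in your telescoping step. Proposition~\ref{BoundGX} bounds $\vertiii{\cal G(Y)\,Y}$ only when the matrix inside the resolvent \emph{matches} the matrix it is multiplied against; it says nothing about $\vertiii{\cal G(X)\,X'}$ or $\vertiii{X^\top \cal G(X')}$ when $X\neq X'$. Indeed these mixed quantities are not uniformly bounded: take $X=0$, so that $\cal G(X)=-z^{-1}I_p$, and then $\vertiii{\cal G(X)X'}=|z|^{-1}\vertiii{X'}$ is unbounded as $X'$ varies. Your parenthetical justification (``by transposition'' or ``repeating the argument with $X$ and $X'$ swapped'') does not resolve this, because the identity $\cal G K = I_p + z\cal G$ used in Proposition~\ref{BoundGX} requires $K=\tfrac1n XX^\top$ to be built from the \emph{same} $X$ that appears in the product.

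The fix is immediate: use the other telescoping
\[
X'X'^\top - XX^\top \;=\; X(X'-X)^\top + (X'-X)X'^\top,
\]
which pairs $X$ with $\cal G(X)$ on the left and $X'^\top$ with $\cal G(X')$ on the right. This is exactly what the paper does (writing $X'=X+H$), and then Proposition~\ref{BoundGX} applies directly to $\vertiii{\cal G(X)X}$ and $\vertiii{X'^\top\cal G(X')}=\vertiii{\cal G(X')X'}$, yielding the stated Lipschitz constant with no further change to your argument.
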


\begin{proof} Let us consider $X$ and $H \in \R^{p \times n}$. We have:
\begin{align*}
    \cal G(X) - \cal G(X+H) &=  \cal G(X) \frac 1 n \pt{(X+H)(X+H)^\top - X X^\top} \cal G(X+H) \\
    &= \frac 1 n  \cal G(X)  \pt{XH^\top +H(X+H)^\top } \cal G(X+H) .
\end{align*}
We bound this expression in the following way: 
\begin{align*}
   & \vertif{ \cal G(X) - \cal G(X+H)} \\
    &\leq  
 \frac 1 n\pt{  \vertiii{\cal G(X)X} \vertif{H^\top} \vertiii{ \cal G(X+H)} + \vertiii{\cal G(X)} \vertif{H} \vertiii{(X+H)^\top \cal G(X+H)}} \\
    &\leq \frac 1 n \pt{n^\half \pt{  2^\half \eta |z|^\half } \vertif{H} \eta + \eta\vertif{H}n^\half \pt{  2^\half \eta |z|^\half }} = n^{-\half} \pt{ {2^{\frac 3 2} \eta^2 |z|^\half}{ } }  \vertif{H}.
\end{align*}
\end{proof}

As an immediate consequence, we obtain by Proposition \ref{LipschitzTransformsConcentration} the following key result for the concentration of sample covariance resolvents:

\begin{cor} \label{ConcentrationG} If $X \propto \cal E_2(1 )$, then $\cal G \propto \cal E_2  \pt{ n^{-\half} \eta^2  |z|^\half{  }} $.
\end{cor}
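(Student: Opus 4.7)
The claim is an immediate consequence of the two preceding results, so my plan is essentially to combine them and bookkeep the constants. Concretely, Proposition \ref{LipG} tells us that the map $X \mapsto \cal G(X) = (n^{-1} X X^\top - z I_p)^{-1}$, seen as a function from $(\R^{p\times n}, \vertif{\cdot})$ to $(\R^{p\times p}, \vertif{\cdot})$, is Lipschitz with constant $k = n^{-1/2}\, 2^{3/2} \eta^2 |z|^{1/2}$. Proposition \ref{LipschitzTransformsConcentration} then transfers the concentration hypothesis $X \propto \cal E_2(1)$ through this Lipschitz map, yielding
\[ \cal G \propto \cal E_2(k) = \cal E_2\pt{n^{-1/2}\, 2^{3/2} \eta^2 |z|^{1/2} }, \]
and the absolute multiplicative factor $2^{3/2}$ is absorbed into the universal constant $C$ appearing in Definition \ref{DefConcentration}, which leaves exactly the announced scale $n^{-1/2}\eta^2 |z|^{1/2}$ for the observable diameter.

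The only subtlety worth checking is that the concentration framework of Definition \ref{DefConcentration}, stated for vectors with scalar $1$-Lipschitz test functions, applies verbatim to the matrix $\cal G$ equipped with the Frobenius norm. This is handled by viewing $\R^{p \times p}$ as a Euclidean space via the Frobenius inner product: any $1$-Lipschitz scalar test function on $(\R^{p\times p}, \vertif{\cdot})$, when precomposed with $\cal G(\cdot)$, becomes a $k$-Lipschitz scalar function on $(\R^{p\times n}, \vertif{\cdot})$, which is precisely the regime covered by Proposition \ref{LipschitzTransformsConcentration}.

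There is no real obstacle here, as all the technical work was done in establishing the Lipschitz bound of Proposition \ref{LipG}; the corollary itself is just a one-line application. What this statement provides, however, is the key black box that will be used repeatedly in the subsequent sections to propagate concentration from the entries of $X$ to functionals of $\cal G$, so the only care needed is to record the exact scaling in $n$, $\eta$ and $|z|$, since the quantitative bounds of Theorem \ref{MainThm} depend sharply on these exponents.
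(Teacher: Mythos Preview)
Your proof is correct and matches the paper's own approach exactly: the paper simply states that the corollary is an immediate consequence of Proposition~\ref{LipschitzTransformsConcentration} applied with the Lipschitz constant from Proposition~\ref{LipG}, which is precisely what you do. The only cosmetic point is that when $z\in\C^+$ the resolvent is complex-valued, so the target space is $(\C^{p\times p},\vertif{\cdot})$ rather than $(\R^{p\times p},\vertif{\cdot})$, but this changes nothing in the argument.
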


\subsection{LOO resolvent and co-resolvent}

Let $x_1$, $x_2, \dots, x_n \in \R^p$ be the columns of $X$. For a better readability we denote the first column $x_1$ simply $x$.

We define the Leave One Out (LOO) matrix $X_- = \pt{\bf 0_p, x_2, \dots , x_n}$, the  LOO sample covariance $K_- = \frac 1 n X_- {X_-}^\top$ and the LOO resolvent $\cal G_- = \cal G_{K_-}(z)$. We have the following identities :
\begin{align*}
    K_- &= \frac 1 n \sum_{j \geq 2} x_j x_j^\top = K - \frac 1 n x x^\top,\\
 \cal G_- &=  \pt{K_- - z I_p}^{-1} = \pt{ \frac 1 { n} X_- {X_-}^\top - z I_p}^{-1}.
\end{align*} 
We also define the co-sample covariance matrix by swapping $X$ and $X^\top$: $\check K = \frac 1 n X^\top X$, and the co-resolvent: $\check{\cal G }= \cal G_{\check K}(z)$. In other terms:
\[ \check{\cal G } = \pt{\check K- z I_n}^{-1} = \pt{ \frac 1 { n}  X^\top X - z I_n}^{-1}. \] 
The LOO resolvent and the  co-resolvent naturally inherits all bounds and concentration properties of the sample covariance resolvent: $\vertiii{\cal G_-}$ and $\vertiii{\check{\cal G}} \leq \eta$, and if $X \propto \cal E_2(1)$, then $\cal G_-$ and $\check{\cal G} \propto \cal E_2\pt{ n^{-\half} \eta^2  |z|^\half{  }}  $.
When $X$ has an additional structure of independence between the columns, the first column $x$ is independent from the LOO resolvent $\cal G_-$, leading to further interesting properties.

\begin{prop}\label{VarianceQuadraticForms} Assume that $X \in \R^{p \times n}$ has i.i.d. columns, that  $X$ is $\propto~\cal E_2(1)$ concentrated, and that  $\vertii{\E[x]}$ and $\vertiii{\E\br{x x^\top}}$ are bounded. Let $\sigma^2 = \eta^2\pt{1 + n^{-1} \eta^2 |z|}$.
Then uniformly in any deterministic matrix $B \in \C^{p \times p}$ with $\vertif{B} \leq 1$:
\begin{align}        \label{VarianceQuadraticForms3}  \Var\br{x^\top \cal G_- x}  &\leq O\pt{p \sigma^2 }\\
\label{VarianceQuadraticForms1}
    \Var\br{x^\top B \cal G_- x}  &\leq O\pt{ \sigma^2 } \\
      \label{VarianceQuadraticForms2}  \Var\br{x^\top \cal G_- B \cal G_- x}  &\leq O\pt{\eta^2 \sigma^2 } 
\end{align}
\end{prop}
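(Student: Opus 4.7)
The plan is to exploit the independence of $x$ and $\cal G_-$ (which follows from the i.i.d.\ column assumption) through the law of total variance. For any random matrix $A$ measurable with respect to $\cal G_-$,
\[ \Var\br{x^\top A x} = \E\br{\Var\br{x^\top A x \mid \cal G_-}} + \Var\br{\E\br{x^\top A x \mid \cal G_-}}. \]
Conditionally on $\cal G_-$, the vector $x$ retains its $\cal E_2(1)$ concentration and bounded mean, so Proposition \ref{HansonWright} followed by Proposition \ref{VarianceConcentration}, applied uniformly in the realization of $\cal G_-$, yields $\Var[x^\top A x \mid \cal G_-] \leq O(\vertif{A}^2 + \vertiii{A}^2) \leq O(\vertif{A}^2)$. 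Since $\Sigma = \E[xx^\top]$, the conditional mean $\E[x^\top A x \mid \cal G_-] = \Tr(A\Sigma)$ is a deterministic function of $\cal G_-$ alone.

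For the conditional-variance contribution, I bound $\vertif{A}$ in each case using $\vertiii{\cal G_-}\leq \eta$, $\vertif{\cal G_-}\leq p^\half \eta$ and $\vertif{MN}\leq \vertiii{M}\vertif{N}$: this yields $\vertif{A}^2 \leq p\eta^2$, $\eta^2$ and $\eta^4$ for \eqref{VarianceQuadraticForms3}, \eqref{VarianceQuadraticForms1} and \eqref{VarianceQuadraticForms2} respectively. For the variance of the conditional mean I compose the Lipschitz map $X \mapsto \cal G_-(X)$, which has Frobenius Lipschitz constant $O(n^{-\half}\eta^2|z|^\half)$ by Proposition \ref{LipG} (composed with the $1$-Lipschitz projection $X \mapsto X_-$ that zeroes out the first column), with the outer map $M \mapsto \Tr(A(M)\Sigma)$. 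In cases \eqref{VarianceQuadraticForms3} and \eqref{VarianceQuadraticForms1} this outer map is linear with Frobenius Lipschitz constants $\vertif{\Sigma}\leq O(p^\half)$ and $\vertif{\Sigma B}\leq \vertiii{\Sigma}\vertif{B}\leq O(1)$, so Proposition \ref{VarianceConcentration} applied to $X\propto \cal E_2(1)$ gives variances $O(p\eta^4|z|/n)$ and $O(\eta^4|z|/n)$.

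The only delicate case is \eqref{VarianceQuadraticForms2}, where the outer map $M \mapsto \Tr(MBM\Sigma)$ is quadratic in $M$ and hence only locally Lipschitz. The identity $MBM-NBN = MB(M-N) + (M-N)BN$ together with the uniform bound $\vertiii{\cal G_-}\leq \eta$ and $|\Tr(PQ)|\leq \vertif{P}\vertif{Q}$ give, for any $M, N$ in the image of $\cal G_-$,
\[ |\Tr(MBM\Sigma) - \Tr(NBN\Sigma)| \leq (\vertif{\Sigma MB} + \vertif{BN\Sigma})\vertif{M-N} \leq O(\eta)\vertif{M-N}, \]
since $\vertif{\Sigma MB} \leq \vertiii{\Sigma}\vertiii{M}\vertif{B}\leq O(\eta)$ and likewise for $\vertif{BN\Sigma}$. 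Composition then yields a global Lipschitz constant $O(n^{-\half}\eta^3|z|^\half)$ in $X$, and Proposition \ref{VarianceConcentration} produces a variance $O(\eta^6|z|/n)$. Summing the two contributions in each case and using $\sigma^2 = \eta^2(1+n^{-1}\eta^2|z|)$ gives exactly $O(p\sigma^2)$, $O(\sigma^2)$ and $O(\eta^2\sigma^2)$, as required. The main subtlety is that the quadratic outer map in case \eqref{VarianceQuadraticForms2} demands the uniform spectral-norm bound on $\cal G_-$ to turn its local Lipschitz estimate into a global one.
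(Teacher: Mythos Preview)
Your proof is correct and follows essentially the same approach as the paper: conditioning on $\cal G_-$ via independence, applying Hanson--Wright for the conditional variance, and using the Lipschitz concentration of $\cal G_-$ for the variance of $\Tr(A\Sigma)$, including the same local-to-global Lipschitz argument for the quadratic map in~\eqref{VarianceQuadraticForms2}. The only cosmetic differences are that you invoke the exact law of total variance where the paper uses a $2+2$ triangle-inequality decomposition, and you treat~\eqref{VarianceQuadraticForms3} directly whereas the paper deduces it from~\eqref{VarianceQuadraticForms1} with $B=p^{-1/2}I_p$.
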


\begin{rem}
These variance bounds correspond exactly to the ones we would obtain by considering that $\cal G_-$ is deterministic and applying the concentration of quadratic forms given by Proposition \ref{HansonWright}. The full concentration property of these  quantities remains however unclear.
\end{rem}

\begin{proof} We denote $\Sigma = \E\br{x x^\top}$. Remark that $\Sigma = \frac 1 n \E\br{ \sum_{i=1}^n  x_i x_i^\top} = \frac 1 n \E\br{ X X^\top} =  \E[K]$. Without loss of generality we can assume that $\vertii{\E[x]}\leq 1$ and  $\vertiii{\Sigma} \leq 1$. 
Let us first prove in full details the second inequality.

For any deterministic matrix $M \in \R^{p \times p}$ with $\vertif{M} \leq \eta $, from Proposition \ref{HansonWright} we have $x^\top M x \propto \cal E_2\pt{\eta } + \cal E_1\pt{\eta}$, thus $\Var\br{x^\top M x}  \leq O\pt{\eta^2 } $. If $B$ is deterministic with $\vertif{B} \leq 1$, then $\vertif{B\cal G_-} \leq \eta$ and using independence:
\begin{align*}
    \E\br{\verti{x^\top B \cal G_- x - \Tr(B \cal G_- \Sigma)}^2 } &= \E\br{\E\br{\left.\verti{x^\top B \cal G_- x - \Tr(B \cal G_- \Sigma)}^2 \,\right|\,  \cal G_- \, }} \\
   &= \E_{\cal G_-} \br{\E_x\br{\verti{x^\top M x - \Tr(M \Sigma)}^2}(M = B \cal G_- )} \\
    &= \E_{\cal G_-} \br{\Var_x\br{x^\top M x }(M = B \cal G_- )} \leq O\pt{\eta^2 }.
\end{align*}

On the other hand  $M \mapsto \Tr(B M \Sigma)$ is $1$-Lipschitz with respect to the Frobenius norm, therefore by Proposition \ref{LipschitzTransformsConcentration} $\Tr\pt{B \cal G_- \Sigma}\propto \cal E_2\pt{ n^{-\half}  \eta^2  |z|^\half{  }} $, and by Proposition \ref{VarianceConcentration} $\Var[\Tr(B \cal G_- \Sigma)] \leq O\pt{ n^{-1}  \eta^4 |z|}$. We combine both estimates to obtain the inequality (\ref{VarianceQuadraticForms1}):
\begin{align*}
    \Var\br{x^\top B \cal G_- x} 
    &= \E\br{\verti{x^\top B \cal G_- x - \Tr(B \E[ \cal G_-] \Sigma)}^2} \\
    &\leq 2 \E\br{\verti{x^\top B \cal G_- x - \Tr\pt{ B \cal G_- \Sigma}}^2} + 2 \E\br{\verti{\Tr\pt{ B \cal G_- \Sigma} -  \Tr( B\E[ \cal G_-] \Sigma)}^2} \\
    &\leq O\pt{\eta^2 } + O\pt{ n^{-1}  \eta^4 |z|} = O(\sigma^2).
\end{align*}
At no point did the $O(\cdot)$ notations depend on $B$ in the estimates, as explained in the concentration section.

The first inequality (\ref{VarianceQuadraticForms3}) can be seen as a consequence of the  second inequality applied to $B = p^{-\half} I_p$ which satisfies $\vertif{B} \leq 1$.
The proof of the third inequality (\ref{VarianceQuadraticForms2}) is quite similar to the first one. For this reason we will point out only the key differences. Using the deterministic bound $\vertif{\cal G_- B \cal G_-} \leq \eta^2$ we get:
\[\E\br{\verti{x^\top \cal G_- B \cal G_- x - \Tr(\cal G_- B \cal G_- \Sigma)}^2} \leq O\pt{\eta^4 }. \]
On the other hand the map $M \mapsto \Tr(M B M \Sigma) = \Tr( B M \Sigma M)$ is $ {\eta  {} }$- Lipschitz  on the set $\left \{ M \in \C^{p\times p} \rm{~with~} \vertiii{M} \leq \eta \right\}$. Indeed:
\begin{align*}
 \vertif{ M \Sigma M -  M' \Sigma M' } &\leq
  \vertiii{M} \vertiii{\Sigma}\vertif{M- M'} +\vertif{M-M'} \vertiii{\Sigma}\vertiii{M'}\\
  &\leq \eta \vertif{M- M'}.
\end{align*}
This implies that $\Tr(\cal G_- B \cal G_- \Sigma) \propto  \cal E_2\pt{ n^{-\half}  \eta^3  |z|^\half{  }} $ and $\Var[\Tr(\cal G_- B \cal G_- \Sigma)]  \leq O\pt{ n^{-1}  \eta^6 |z|}$. Combining the previous two estimates proves the inequality (\ref{VarianceQuadraticForms2}). 
\end{proof}

We wrap up this section by mentioning some of the links that exist between the resolvent, the LOO resolvent and the co-resolvent. We need the following linear algebra results:

\begin{lem}[Sherman-Morrison] \label{ShermanMorrison} Given an invertible matrix $M \in \C^{ p \times p} $ and two vectors $u,v \in \C^p$, $M + u v^\top$ is invertible if and only if $1 + v^\top M^{-1} u \neq 0 $, in which case the following identities hold true:
\begin{align*}
    \pt{M + u v^\top}^{-1} &= M^{-1} - \frac{M^{-1} u v^\top M^{-1}}{1 + v^\top M^{-1} u }, \\
    \pt{M + u v^\top}^{-1} u  &= \frac{M^{-1} u }{1 + v^\top M^{-1} u }.
\end{align*}
\end{lem}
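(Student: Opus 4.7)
The plan is to verify the Sherman-Morrison lemma by a direct algebraic computation, once the invertibility criterion is established. The two parts of the statement are so tightly linked that the criterion and the formula can essentially be proved simultaneously.

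First, I would handle the invertibility assertion. Writing $M + uv^\top = M(I_p + M^{-1} u v^\top)$ and using that $\det(M) \neq 0$, the invertibility of $M + uv^\top$ is equivalent to that of $I_p + M^{-1} u v^\top$. The matrix $I_p + wv^\top$, with $w := M^{-1} u$, is a rank-one perturbation of the identity whose non-trivial action lives on the line $\C w$; one sees directly that any vector of the form $x - \frac{v^\top x}{1 + v^\top w} w$ is sent to $x$ when $1 + v^\top w \neq 0$, while conversely if $1 + v^\top w = 0$ the vector $w$ lies in the kernel of $I_p + wv^\top$. This gives the equivalence $M+uv^\top$ invertible $\Leftrightarrow 1 + v^\top M^{-1} u \neq 0$, and also suggests the right candidate for the inverse.

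Next, assuming $\alpha := v^\top M^{-1} u \neq -1$, I would check the first identity by a direct multiplication: setting $N := M^{-1} - (1+\alpha)^{-1} M^{-1} u v^\top M^{-1}$, the product $(M + uv^\top) N$ expands into four terms, and collecting the coefficient of $uv^\top M^{-1}$ gives $1 - (1+\alpha)^{-1} - \alpha(1+\alpha)^{-1} = 0$, so $(M + uv^\top) N = I_p$. Since $M + uv^\top$ is invertible this forces $N = (M + uv^\top)^{-1}$.

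Finally, the second identity follows from the first by applying it to the vector $u$: on the right hand side one gets $M^{-1} u - (1+\alpha)^{-1} \alpha M^{-1} u = (1+\alpha)^{-1} M^{-1} u$, which is the claimed formula. There is no real obstacle here; the statement is a standard linear-algebra identity and the only mildly delicate point is keeping the invertibility dichotomy visible so that both the ``only if'' and the explicit formula are justified.
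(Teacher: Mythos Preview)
Your proof is correct. The paper states Sherman--Morrison as a standard linear-algebra lemma without proof, so there is nothing to compare against; your direct verification of the invertibility criterion and of the two identities is the usual argument and is fully sound.
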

\begin{lem}
 $\frac 1 n X^\top \cal G X = I_n + z \check{\cal G}$.
\end{lem}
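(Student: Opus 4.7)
The plan is to exploit the standard ``push-through'' identity between $\cal G$ and $\check{\cal G}$, namely that $X^\top \cal G = \check{\cal G} X^\top$. Once this is established, the result follows from the defining equation of $\check{\cal G}$ in one line.

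\medskip

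First, I would observe the elementary algebraic identity
\[ X^\top K = X^\top \pt{\frac 1 n X X^\top} = \pt{\frac 1 n X^\top X} X^\top = \check K X^\top. \]
Subtracting $z X^\top$ from both sides yields
\[ X^\top (K - z I_p) = (\check K - z I_n) X^\top. \]
Since $z \in \R^{*-} \cup \C^+$, both $K - z I_p$ and $\check K - z I_n$ are invertible (their eigenvalues lie in $\R^+$, which avoids $z$), so multiplying on the right by $\cal G = (K - z I_p)^{-1}$ and on the left by $\check{\cal G} = (\check K - z I_n)^{-1}$ gives
\[ \check{\cal G}\, X^\top = X^\top\, \cal G. \]

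\medskip

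Starting from this identity and using the resolvent definition $(\check K - z I_n)\check{\cal G} = I_n$, we then compute
\[ \frac 1 n X^\top \cal G X = \frac 1 n \check{\cal G}\, X^\top X = \check{\cal G}\, \check K = \check{\cal G}(\check K - z I_n) + z \check{\cal G} = I_n + z \check{\cal G}, \]
which is the desired equality. There is no real obstacle here: the only point that requires a moment's care is the invertibility of $K - z I_p$ and $\check K - z I_n$ for the chosen range of $z$, which is already contained in Proposition \ref{BoundG} and the identical argument applied to the co-resolvent.
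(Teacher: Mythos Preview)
Your proof is correct and follows essentially the same route as the paper: both derive the push-through identity $X^\top \cal G = \check{\cal G}\, X^\top$ from $X^\top(K - zI_p) = (\check K - zI_n)X^\top$, and then conclude via $\check{\cal G}\,\check K = I_n + z\check{\cal G}$. The only difference is presentational---you spell out the intermediate step and the invertibility justification more explicitly.
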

\begin{proof}
Since $X^\top \left( \frac 1 n X X^\top - z I_p\right)  =  \left( \frac 1 n X^\top X  - z I_n\right) X^\top $, we have $ \frac 1 n X^\top \cal G X =    \frac 1 n \left( \frac 1 n X^\top X  - z I_n\right) ^{-1} X^\top  X =   I_n + z \check{\cal G}$.
\end{proof}

As an immediate application of these lemmas, we get the following identities, which we will refer to as LOO identities. These will be extremely useful to exploit any column independence structure in a random matrix, and also to prove bounds and concentration properties that  otherwise would have been far from obvious.

\begin{prop}[LOO identities]\label{LOOId} $1 + \frac 1 { n} x^\top \cal G_- x \neq 0$, and:
\begin{align*}\frac 1 n x^\top \cal G x    
 &= 1 - \frac 1 {1 + \frac 1 { n} x^\top \cal G_- x} =    1 + z \check{\cal G}_{11}, \\
    \cal G &= \cal G_- - \frac 1 {n} \frac{\cal G_- x x^\top \cal G_-}{1 + \frac 1 { n} x^\top \cal G_- x}  = \cal G_- + \frac z n \check{\cal G}_{11} \cal G_- x x^\top \cal G_-,\\
    \cal G x &=  \frac{\cal G_- x}{1 + \frac 1 {n} x^\top \cal G_- x} = - z \check{\cal G}_{11} \cal G_- x. \end{align*}
\end{prop}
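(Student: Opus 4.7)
The plan is to deduce everything from Sherman--Morrison (Lemma 4.5) applied to the rank-one perturbation relating $K$ and $K_-$, combined with the co-resolvent identity $\frac{1}{n} X^\top \cal G X = I_n + z \check{\cal G}$ from the lemma immediately preceding the proposition, read at its $(1,1)$ entry.

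First I would write $K - z I_p = (K_- - z I_p) + \frac{1}{n} x x^\top$, which is a rank-one update of the invertible matrix $K_- - z I_p$. Applying Sherman--Morrison with $M = K_- - z I_p$, $u = \frac{x}{n}$ and $v = x$, I note that the left-hand side $K - z I_p$ is also invertible (the spectral parameter $z$ lies in $\R^{*-} \cup \C^+$, hence is not an eigenvalue of the positive semi-definite matrix $K$). The equivalence in Lemma 4.5 then \emph{forces} $1 + v^\top M^{-1} u = 1 + \frac{1}{n} x^\top \cal G_- x \neq 0$. The two identities provided by that lemma give directly the first equalities
$$\cal G = \cal G_- - \frac{1}{n}\, \frac{\cal G_- x\, x^\top \cal G_-}{1 + \frac{1}{n} x^\top \cal G_- x}, \qquad \cal G x = \frac{\cal G_- x}{1 + \frac{1}{n} x^\top \cal G_- x}.$$

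Next, I contract the second identity with $\frac{1}{n} x^\top$ on the left. This yields
$$\frac{1}{n} x^\top \cal G x = \frac{\frac{1}{n} x^\top \cal G_- x}{1 + \frac{1}{n} x^\top \cal G_- x} = 1 - \frac{1}{1 + \frac{1}{n} x^\top \cal G_- x},$$
by adding and subtracting $1$ in the numerator. This proves the first equality of the scalar identity.

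Finally, the co-resolvent identity $\frac{1}{n} X^\top \cal G X = I_n + z \check{\cal G}$ read at the diagonal entry $(1,1)$ gives $\frac{1}{n} x^\top \cal G x = 1 + z \check{\cal G}_{11}$, which is the remaining equality of the scalar identity. Comparing the two expressions obtained for $\frac{1}{n} x^\top \cal G x$ identifies the Sherman--Morrison denominator: $-z\, \check{\cal G}_{11} = (1 + \frac{1}{n} x^\top \cal G_- x)^{-1}$. Substituting this into the two Sherman--Morrison formulas displayed above produces the second equalities stated for $\cal G$ and $\cal G x$. The whole argument is a bookkeeping exercise once Sherman--Morrison and the co-resolvent identity are in hand; the only point requiring (minimal) vigilance is the nonvanishing of the scalar denominator, which is automatic from the invertibility of $K - z I_p$.
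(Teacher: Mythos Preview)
Your proof is correct and follows exactly the route the paper intends: the paper presents the proposition as ``an immediate application of these lemmas'' (Sherman--Morrison and the co-resolvent identity), and your write-up simply spells out that application. The only content the paper leaves implicit---the nonvanishing of the denominator via the invertibility of $K - zI_p$, and the substitution $-z\check{\cal G}_{11} = (1+\tfrac{1}{n}x^\top\cal G_- x)^{-1}$---you have made explicit and correctly.
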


\newpage

\section{First Deterministic Equivalent}

\subsection{General properties of the deterministic equivalents}

We place ourselves under the Assumptions \ref{Ass} for the matrix $X$, and we set the sample covariance matrix $K = n^{-1} X X^\top$. For a better readability we will note the first column of $X$ as $ x_1 = x$. The other columns are i.i.d. sampled with the same law as $x$, hence $\Sigma = \E[K] = \E[x x^\top] $. Without loss of generality, we can assume that $\vertii{\E[x]} \leq 1$ and $\vertiii{\Sigma} \leq 1$.

We introduce respectively the classical resolvent $\cal G = \pt{\frac {X X^\top} n  - z I_p}^{-1}$, the Leave-One-Out (LOO) resolvent $\cal G_- =  \pt{\frac {{X X^\top - x x^\top}} n  - z I_p}^{-1}$  and the co-resolvent $\check{\cal G} =  \pt{\frac {X^\top X} n   - z I_n}^{-1}$ associated to $X$. An in-depth analysis of these resolvent matrices can be found in Section 4.

The general form of our deterministic equivalents is given by: \[\bf G^{\frak l}=  \pt{\frac z {\frak l} \Sigma - z I_p}^{-1} \] for some sequence of deterministic parameters $\frak l \in \C$ that may vary with $z$. 

 We will first give a sufficient condition on $\frak l$ so that $\bf G^{\frak l}$ is well defined and bounded. To this end we define $\Omega=(- \infty, z]$ if $z \in \R^{*-}$, and $\Omega = \{ \frak l \in \C$ such that $\Im(\frak l) \geq \Im(z)$ and  $\Im(z^{-1} {\frak l})\geq 0\}$ if $z \in \C^+$. We also recall the definition of $\eta = |z|^{-1}$ if $z \in \R^{*-}$ and $\eta = { \Im(z)}^{-1}$ if $z \in \C^+$.

\begin{lem}\label{AutomaticBoundDetEq} If $\frak l \in  \Omega$, then $\bf G^{ \frak l}$  is well defined, and $\vertiii{\bf G^{ \frak l}}$ and $\vertiii{\frac z {\frak l} \bf G^{ \frak l} } \leq \eta$.\end{lem}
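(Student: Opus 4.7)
The plan is to diagonalise and reduce both matrix inequalities to scalar estimates. Since $\Sigma$ is real symmetric positive semi-definite, it admits an orthonormal eigenbasis with non-negative eigenvalues $\lambda$. In this basis the matrix $\frac{z}{\frak l}\Sigma - zI_p$ is diagonal with entries $\frac{z}{\frak l}\lambda - z$, and (being a rational function of $\Sigma$ in a real orthogonal basis) $\bf G^{\frak l}$ is diagonalised by the same basis, so its spectral norm equals $\max_\lambda \left|\frac{z}{\frak l}\lambda - z\right|^{-1}$ while that of $\frac{z}{\frak l}\bf G^{\frak l}$ equals $\max_\lambda \left|\frac{z/\frak l}{(z/\frak l)\lambda - z}\right|$. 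The two claims of the lemma thus reduce to the scalar lower bounds
$$\left|\frac{z}{\frak l}\lambda - z\right| \geq \eta^{-1} \quad\text{and}\quad |\lambda - \frak l| \geq \eta^{-1} \quad \text{for every } \lambda \geq 0,$$
where the second is obtained via the factorisation $\frac{z}{\frak l}\lambda - z = \frac{z}{\frak l}(\lambda - \frak l)$, which collapses the quotient $(z/\frak l)/((z/\frak l)\lambda - z)$ to $1/(\lambda - \frak l)$. In particular invertibility of $\frac{z}{\frak l}\Sigma - zI_p$ comes for free once either bound is established.

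In the real case $z \in \R^{*-}$ with $\frak l \leq z < 0$, all quantities are real and $z/\frak l \in (0,1]$: from $\lambda \geq 0$ one immediately gets $(z/\frak l)\lambda - z \geq -z = |z| = \eta^{-1}$, while $\lambda - \frak l \geq -\frak l \geq |z| = \eta^{-1}$. In the complex case $z \in \C^+$ with $\frak l \in \Omega$, the identity $\Im(1/w) = -\Im(w)/|w|^2$ applied to $w = z^{-1}\frak l$ translates the hypothesis $\Im(z^{-1}\frak l) \geq 0$ into $\Im(z/\frak l) \leq 0$. Hence $\Im((z/\frak l)\lambda - z) = \lambda\Im(z/\frak l) - \Im(z) \leq -\Im(z)$, giving $|(z/\frak l)\lambda - z| \geq \Im(z) = \eta^{-1}$; and the direct hypothesis $\Im(\frak l) \geq \Im(z)$ yields $|\lambda - \frak l| \geq \Im(z) = \eta^{-1}$.

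The lemma has no serious obstacle: it is essentially a verification that the set $\Omega$ has been tailored so that precisely these two scalar lower bounds hold simultaneously. The only minor computation requiring care is the equivalence of $\Im(z^{-1}\frak l) \geq 0$ and $\Im(z/\frak l) \leq 0$ via $\Im(1/w) = -\Im(w)/|w|^2$, after which everything follows from elementary estimates on half-planes.
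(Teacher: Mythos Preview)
Your proof is correct and follows essentially the same approach as the paper: both reduce to eigenvalue estimates via the diagonalisation of $\Sigma$, treat the real and complex cases separately, and use the identical scalar lower bounds $\left|\frac{z}{\frak l}\lambda - z\right| \geq \eta^{-1}$ and $|\lambda - \frak l| \geq \eta^{-1}$. You are slightly more explicit in justifying why the spectral norm equals the maximum modulus of the diagonal entries and in deriving $\Im(z/\frak l)\leq 0$ from $\Im(z^{-1}\frak l)\geq 0$, but the argument is the same.
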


\begin{proof} The eigenvalues of $\frac z {\frak l} \Sigma - z I_p$ are $\frac z {\frak l} \lambda - z $ for $\lambda \in \Sp \Sigma \subset \R^+$.
If $z \in \R^{*-}$, then $\frac z {\frak l} \geq 0$ and $\frac z {\frak l} \lambda - z \geq |z|$. If $z \in \C^+$, then $\Im\pt{ \frac z {\frak l} } \leq 0$ and ${\Im\pt{\frac z {\frak l} \lambda - z }} \leq - \Im(z)$.
In both cases, all the eigenvalues of $\frac z {\frak l} \Sigma - z I_p$ are greater or equal than $\eta^{-1}$  in modulus, hence $\bf G^{ \frak l} = \pt{\frac z {\frak l} \Sigma - z I_p}^{-1}$  is well defined and $\vertiii{\bf G^{ \frak l}} \leq \eta$.

The argument for $\frac z {\frak l} \bf G^{\frak l}$ is similar: the eigenvalues of $ \Sigma - \frak l I_p$ are $ \lambda - \frak l $ for $\lambda \in \Sp \Sigma $.
If $z \in \R^{*-}$ then $\frak l \leq z$ and $\lambda - \frak l \geq |z|$. If $z \in \C^+$ then $\Im(\frak l) \geq \Im(z)$ and ${\Im\pt{\lambda - \frak l}}\leq - \Im(z)$.
In both cases, all the eigenvalues of $ \Sigma - \frak l I_p$ are greater or equal than $\eta^{-1}$  in modulus, hence  $\vertiii{\frac z {\frak l} \bf G^{ \frak l} } =\vertiii{ \pt{\Sigma - \frak l I_p}^{-1}} \leq \eta$.
\end{proof}

\subsection{Introduction of parameters $\frak a$ and $\frak b$}  Let us consider $\frak a =  z + \frac z n x^\top \cal G_- x$ and $\frak b = \E[\frak a] = z + \frac z n \Tr\pt{\Sigma \E\br{\cal G_-}}$. In the rest of this section we  will prove that $\bf G^{\frak b}$ is close to $\E\br{\cal G}$.  The precise statement constitutes Theorem \ref{FirstDetEq}.

We can rewrite the LOO identities of Proposition \ref{LOOId} in the following fashion:  \begin{align*} \frac 1 n  x^\top \cal G x &= 1 - \frac {z }{\frak a} = 1 + z{\check{\cal G}_{11}},  \\ 
    \cal G &= \cal G_- - \frac z {\frak a}\frac 1 {n} {\cal G_- x x^\top \cal G_-}, \\ \label{aId3}
    \cal G x &=   \frac z {\frak a} \cal G_- x .
\end{align*} 
In particular $\frak a= -  {\check{\cal G}}_{11}^{-1}$. We also deduce the following properties:
\begin{prop}\label{PropertiesAandB}$\frak a$ and $\frak b \in \Omega$. $ \bf G^{ \frak b}$ is well defined, and $\verti{ {\frak a}^{-1}}$ and $\vertiii{\frac z {\frak b}  \bf G^{ \frak b}}$ are bounded from above by  $\eta$.
\end{prop}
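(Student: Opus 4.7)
The plan is to deduce everything from two inputs already at hand: the identity $\frak a = -\check{\cal G}_{11}^{-1}$ stated just above the proposition, and the positivity relations $\Im(\cal G_-) = \Im(z)\,\cal G_-\cal G_-^\dg$ and $\Im(z\cal G_-) = \Im(z)\,K_-\cal G_-\cal G_-^\dg$ provided by Proposition \ref{ImG}. Once $\frak a$ is shown to lie in $\Omega$ almost surely, membership of $\frak b$ will follow by convexity of $\Omega$, and the claim on $\vertiii{\frac z {\frak b} \bf G^{\frak b}}$ will be an immediate application of Lemma \ref{AutomaticBoundDetEq} to the choice $\frak l = \frak b$.

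First I would dispose of the bound on $|\frak a^{-1}|$: since $\frak a^{-1} = -\check{\cal G}_{11}$ and $|\check{\cal G}_{11}| \leq \vertiii{\check{\cal G}} \leq \eta$ (the deterministic bound on the co-resolvent recalled at the end of Section 4.2), this requires no probabilistic input. To show $\frak a \in \Omega$, I would split the two cases. For $z \in \R^{*-}$, the matrix $\cal G_- = (K_- - zI_p)^{-1}$ is real symmetric positive definite (because $K_- \succeq 0$ and $-z > 0$), so $x^\top \cal G_- x \geq 0$ and therefore $\frac z n x^\top \cal G_- x \leq 0$, giving $\frak a \leq z$. For $z \in \C^+$, Proposition \ref{ImG} yields $x^\top \Im(z\cal G_-) x \geq 0$ and $x^\top \Im(\cal G_-) x \geq 0$, which translate directly into $\Im(\frak a) - \Im(z) = \frac 1 n x^\top \Im(z\cal G_-) x \geq 0$ and $\Im(z^{-1}\frak a) = \frac 1 n x^\top \Im(\cal G_-) x \geq 0$, precisely the two inequalities defining $\Omega$.

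Finally I would pass from $\frak a$ to $\frak b$ by convexity: $\Omega$ is convex in both cases (a half-line of $\R$, or the intersection of two closed half-planes of $\C$), and $\frak a \in \Omega$ almost surely with $\frak a$ integrable (since $|\frac z n x^\top \cal G_- x| \leq \frac{|z|\eta}{n}\|x\|^2$ and $\E[\|x\|^2] = \Tr(\Sigma) \leq p$), so taking expectations of the defining inequalities gives $\frak b = \E[\frak a] \in \Omega$. Lemma \ref{AutomaticBoundDetEq} applied to $\frak l = \frak b$ then delivers at once the well-definedness of $\bf G^{\frak b}$ and the bound $\vertiii{\frac z {\frak b} \bf G^{\frak b}} \leq \eta$. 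There is no genuine obstacle in this proposition; the only point worth double-checking is the complex-case imaginary-part computation, which is nothing more than Proposition \ref{ImG} specialized to the quadratic forms $x^\top (\cdot) x$.
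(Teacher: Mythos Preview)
Your proof is correct and follows essentially the same approach as the paper: use Proposition \ref{ImG} (respectively positive-definiteness of $\cal G_-$ in the real case) to place $\frak a$ in $\Omega$, pass to $\frak b$ by convexity/stability under expectation, and invoke Lemma \ref{AutomaticBoundDetEq} for the bounds on $\bf G^{\frak b}$. The only cosmetic difference is that you obtain $|\frak a^{-1}|\le\eta$ from the co-resolvent identity $\frak a^{-1}=-\check{\cal G}_{11}$, whereas the paper reads it off directly from the definition of $\Omega$ (since $\frak a\in\Omega$ forces $|\frak a|\ge\eta^{-1}$); both arguments are one-liners.
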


\begin{proof} If $z \in \R^{*-}$, $\cal G_-$ is a real positive semi-definite matrix and $\frac {\frak a} z = 1 + \frac 1 n x^\top \cal G_- x \geq 1$, hence $\frak a \leq z$. If $z \in \C^+$,  $\Im\pt{\cal G_-}$ and $\Im\pt{z \cal G_-}$ are positive semi-definite matrices using Proposition \ref{ImG}, thus $\Im(\frak a) = \Im(z) + \frac 1 n x^\top \Im\pt{z \cal G_-} x \geq \Im(z)$, and $\Im\pt{\frac {\frak a} z} = \frac 1 n x^\top \Im\pt{\cal G_-} x \geq 0$. In any case,  $\frak a \in \Omega$, and $\Omega$ is stable through expectation so $\frak b \in  \Omega$ as well. The last assertions are immediate consequences of Lemma \ref{AutomaticBoundDetEq} and the definition of $\Omega$.
\end{proof}

\subsection{Proof of the first deterministic equivalent } Let us recall the definition of $\eta = |z|^{-1}$ if $z \in \R^{*-}$ and $\eta = { \Im(z)}^{-1}$ if $z \in \C^+$. We can now prove the following result:

\begin{thm}[First equivalent] \label{FirstDetEq} Let $z(n)$ be a sequence of spectral arguments, either in $\R^{*-}$ or in $\C^+$. If $\eta^{10} |z|^3 \leq O(n)$ and if $|z|$ or $\Im(z)$ is bounded, then:  \[ \vertif{\E[\cal G] - \bf G^{{\frak b}} } \leq O\pt{ \frac {\eta^5 |z|^{\frac 3 2}}{n^\half} } \] 
\end{thm}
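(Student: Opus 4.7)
The plan is to exploit the resolvent identity, the Leave--One--Out (LOO) identities from Proposition \ref{LOOId}, and the variance bounds for quadratic forms from Proposition \ref{VarianceQuadraticForms}.

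The first step is to apply the resolvent identity
\[\bf G^{\frak b} - \cal G = \bf G^{\frak b}(K - \tfrac z{\frak b}\Sigma)\cal G\]
and take expectations. The exchangeability of the columns of $X$ gives $\E[K\cal G] = \E[xx^\top\cal G]$ for $x = x_1$, and the LOO identity $x^\top\cal G = \frac z{\frak a}x^\top\cal G_-$ converts this into $\E[\frac z{\frak a}xx^\top\cal G_-]$. Writing $\frac z{\frak a} = \frac z{\frak b} + (\frac z{\frak a} - \frac z{\frak b})$ and using the independence $x \perp \cal G_-$, which yields $\E[xx^\top\cal G_-] = \Sigma\,\E[\cal G_-]$, leads to the decomposition
\[\bf G^{\frak b} - \E[\cal G] = \tfrac z{\frak b}\bf G^{\frak b}\Sigma\bigl(\E[\cal G_-] - \E[\cal G]\bigr) + \bf G^{\frak b}\,\E\bigl[(\tfrac z{\frak a} - \tfrac z{\frak b})xx^\top\cal G_-\bigr].\]

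The first summand is handled via the other LOO identity $\cal G - \cal G_- = -\frac z{n\frak a}\cal G_- xx^\top\cal G_-$ in the same spirit: split the scalar $\frac z{\frak a}$ once more into $\frac z{\frak b}$ plus a fluctuation, so that the deterministic piece becomes $-\frac z{n\frak b}\E[\cal G_-\Sigma\cal G_-]$. This is bounded in Frobenius norm by $\vertif{\cal G_-\Sigma\cal G_-} \leq \vertiii{\cal G_-}\vertiii{\Sigma}\vertif{\cal G_-} \leq \sqrt p\,\eta^2$, and multiplying by the operator-norm bound $\vertiii{\tfrac z{\frak b}\bf G^{\frak b}\Sigma}\leq \eta$ (which combines Lemma \ref{AutomaticBoundDetEq} and Proposition \ref{PropertiesAandB} with $\vertiii{\Sigma}\leq 1$) gives a contribution of the expected order in $n$, $\eta$, $|z|$. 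The remaining fluctuation piece is of the same type as the main remainder below.

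The main obstacle is to bound the remainder $R := \E\bigl[(\tfrac z{\frak a} - \tfrac z{\frak b})xx^\top\cal G_-\bigr]$ sharply in Frobenius norm, since a direct Cauchy--Schwarz bound $\vertif{\E[\alpha M]}\leq \sqrt{\E|\alpha|^2\,\E\vertif{M}^2}$ is fatal because $\vertif{xx^\top} = \vertii{x}^2$ scales like $p$. My plan is to use the dual formulation
\[\vertif{R} = \sup_{A\,:\,\vertif{A}\leq 1}\bigl|\E\bigl[(\tfrac z{\frak a} - \tfrac z{\frak b})\,x^\top A^\top \cal G_- x\bigr]\bigr|\]
and analyze each inner expectation with Proposition \ref{VarianceProduct}. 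The scalar prefactor $\frac z{\frak a} - \frac z{\frak b} = -\frac{z(\frak a - \frak b)}{\frak a \frak b}$ is a.s.\ bounded by $2|z|\eta^2$ (Proposition \ref{PropertiesAandB}); its deviation from its mean has variance controlled by $\Var[\frak a] \leq \frac{|z|^2}{n^2}\Var[x^\top\cal G_- x]$ via (\ref{VarianceQuadraticForms3}); and the quadratic form $x^\top A^\top\cal G_- x$ has variance $O(\sigma^2)$ uniformly in $A$ by (\ref{VarianceQuadraticForms1}) with mean $\Tr(A^\top\E[\cal G_-]\Sigma)$. The nonzero bias $\E[\tfrac z{\frak a} - \tfrac z{\frak b}]$ (because expectation does not commute with inversion) is treated via the second-order Taylor identity $\frac z{\frak a} - \frac z{\frak b} = -\frac{z(\frak a - \frak b)}{\frak b^2} + \frac{z(\frak a - \frak b)^2}{\frak a\frak b^2}$, turning the residual into a variance-sized quantity. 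Combining everything with $\sigma^2 \leq O(\eta^2)$ (which follows from $\eta^{10}|z|^3 \leq O(n)$) and multiplying by $\vertiii{\bf G^{\frak b}} \leq \eta$ produces the claimed Frobenius bound; the precise bookkeeping of powers of $\eta$, $|z|$ and $n$ in this last step is the technical heart of the argument.
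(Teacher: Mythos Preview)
Your decomposition is essentially the paper's (with $\bf G^{\frak b}$ on the left rather than the right, which is cosmetic), and the use of duality plus Proposition~\ref{VarianceQuadraticForms} is exactly the right toolkit. There is, however, one inefficiency in your treatment of the main remainder that would prevent you from reaching the stated exponent.

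You propose to bound $\E[s\xi]$, with $s=\tfrac z{\frak a}-\tfrac z{\frak b}$ and $\xi=x^\top\cal G_- A x$, by writing $\E[s\xi]=\E[s]\E[\xi]+\Cov[s,\xi]$ and controlling the bias $\E[s]$ via your Taylor identity. Carrying this out yields $|\E[s]|\le |z|\eta^3\Var[\frak a]=O(n^{-1}|z|^3\eta^5)$, while $|\E[\xi]|=|\Tr(A\,\E[\cal G_-]\Sigma)|$ can be as large as $O(n^{1/2}\eta)$; their product is $O(n^{-1/2}|z|^3\eta^6)$, and after the outer factor $\vertiii{\bf G^{\frak b}}\le\eta$ one gets $O(n^{-1/2}|z|^3\eta^7)$. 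This exceeds the target $O(n^{-1/2}|z|^{3/2}\eta^5)$ whenever $|z|^{3/2}\eta^2$ is unbounded --- for instance in the real case with $|z|\to 0$, which the hypotheses allow.

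The paper avoids this loss by \emph{not} isolating $\E[s]$. Since $s=-\tfrac z{\frak b}\cdot\tfrac{\frak a-\frak b}{\frak a}$ and $\frak a-\frak b$ is exactly centered, one applies Cauchy--Schwarz once:
\[
\bigl|\E\bigl[(\frak a-\frak b)\,\tfrac{\xi}{\frak a}\bigr]\bigr|^2 \le \Var[\frak a]\cdot\Var\bigl[\tfrac{\xi}{\frak a}\bigr],
\]
and only then invokes Proposition~\ref{VarianceProduct} to bound $\Var[\xi/\frak a]\le 2\eta^2\Var[\xi]+2|\E\xi|^2\Var[\frak a^{-1}]=O(\eta^6|z|)$. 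The large mean $\E[\xi]$ is thus paired with the small $\Var[\frak a^{-1}]=O(n^{-1}\eta^4|z|)$ rather than with $|\E[s]|$, and the bookkeeping lands exactly on $O(n^{-1/2}\eta^5|z|^{3/2})$. In short: apply Proposition~\ref{VarianceProduct} to the pair $(\frak a^{-1},\xi)$, not to $(s,\xi)$. (Minor point: the a.s.\ bound on $s$ is $2|z|\eta$, not $2|z|\eta^2$.)
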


\begin{rem} In the case where $z$ stays bounded away from $\R^+$, the theorem boils down to a $O(n^{-\half})$ estimate like as implied by \cite{LC21}, Theorem 4.

The assumption $\eta^{10} |z|^3 = O(n)$ is not strictly necessary to obtain an explicit bound. Indeed we can always obtain a $O\pt{n^{-\half} \kappa }$ bound where $\sigma^2 = \eta^2\pt{1+n^{-1} {\eta^2|z|} }$ and  $\kappa ={\sigma^2 \eta^2 |z| +  \sigma \eta^4 |z|^{\frac 3 2} }$.

However $\kappa \geq  \eta^5 |z|^{\frac 3 2}$ so the above bound will not converge to $0$ without the additional assumption that $\eta^{10} |z|^3 = O(n)$. The other assumption can be rewritten as  $1 \leq O(\eta)$, which simplifies the results and is not restrictive for practical applications. Under these two hypothesis, $\kappa = O( \eta^5 |z|^{\frac 3 2})$ which gives the same estimate.
\end{rem}

The proof of Theorem \ref{FirstDetEq} can be organized in three steps:
\subsubsection{First step:}We introduce $\cal G_-$ using the LOO Identities:
 \begin{align*}
   \cal G\left( {\frac {z}{\frak b}} \Sigma -  x x^\top  \right) &=  \cal G {\frac {z}{\frak b}} \Sigma - {\frac {z}{{\frak a}}}  \cal G_- x x^\top  \\
     &=  {\frac {z}{\frak b}} \pt{  \pt{\cal G-\cal G_-} \Sigma   +   \cal G_- \pt{\Sigma - x x^\top}  + \pt{ {1  - {\frac {\frak b}{\frak a}}}} \cal G_- x x^\top }.\end{align*}
  $X X^\top$ is the sum of $n$ $\iid$ copies of $x x^\top$, and $ \E\br{\cal G_- \pt{\Sigma - x x^\top}} = 0 $ by independence, leading to: 
\begin{align*} \label{DecompositionFirstEq}
     \E\br{\cal G} - \bf G^{\frak b} &= 
     \E\left[\cal G\left( {\frac {z}{\frak b}} \Sigma - \frac 1 n X X^\top \right)  \bf G^{\frak b}\right] \nonumber\\
     &=      \E\left[\cal G\left( {\frac {z}{\frak b}} \Sigma - \frac 1 n \sum_{i=1}^n x_i x_i^\top \right)  \bf G^{\frak b}\right]\\
     &= \E\left[\cal G\left( {\frac {z}{\frak b}} \Sigma -  x x^\top  \right) \right] \bf G^{\frak b} \nonumber\\
     &={\frac {z}{\frak b}} \pt{  \E\left[\cal G - \cal G_-\right] \Sigma  +  \E\left[ \frac{\frak a-\frak b}{\frak a}\cal G_- x x^\top \right] } \bf G^{\frak b}.
\end{align*}

\subsubsection{Second step:} We estimate the above quantities using concentration and the properties of LOO quadratic forms. 

First note that the term $\sigma$ appearing in Proposition \ref{VarianceQuadraticForms} boils down to $\sigma = O(\eta)$ under our Assumptions. Indeed $\eta^2 |z| \leq \eta^{\frac 5 2} |z|^{\frac 3 2} \leq O(\eta ^5 |z|^{\frac 3 2} ) \leq O(n^\half) $, so $\sigma^2 = \eta^2\pt{1+n^{-1} {\eta^2|z|} } = O(\eta^2)$. 

We have estimates on the variance of both $\frak a$ and $\frak a^{-1}$: indeed $\Var[\frak a]  = {|z|^2}{n^{-2}}\Var\br{x^\top \cal G_- x}\leq O( n^{-1} \eta^2 |z|^2 )$ from Proposition \ref{VarianceQuadraticForms}, and $ {\frak a}^{-1} = \check{\cal G}_{11} \propto \cal E_2  \pt{ n^{-\half} \eta^2  |z|^\half{  }} $ so $\Var\br{ {\frak a}^{-1}} \leq O\pt{n^{-1} \eta^4 |z|}$.

 Now let us consider a matrix $B \in \C^{ p \times p}$ with $\vertif{B} \leq 1$. We have $ \Tr\left( B  \E[\cal G - \cal G_-] \right) =  \E\br{ \frac z {\frak a}  \frac 1 n \Tr\pt{B \cal G_- x x^\top \cal G_-}} = \frac  z n \E\br{\frac \zeta {\frak a} } $ with  $\zeta = x^\top \cal G_- B \cal G_- x$.

First note that $\verti{\E[\zeta]}=\verti{\Tr\pt{\E\br{ \cal G_- B \cal G_- } \Sigma}} \leq \eta^2 p^\half  \leq O ( n^\half \eta^2)$, and from Proposition \ref{VarianceQuadraticForms} $\Var\br{\zeta} \leq O\pt{\eta^4}$ uniformly in $B$. Using the deterministic bound $\verti{\frak a^{-1}} \leq \eta$, we have:
\begin{align*}   
 \verti{\E\br{\frak a^{-1}  \zeta  } } &\leq  \verti{\E\br{\frak a^{-1} \pt{\zeta -  \E[\zeta]}  } } + \verti{ \E[\frak a^{-1}]\E[\zeta] } \\
 &\leq \eta \Var[\zeta]^\half + \eta \verti{\E[\zeta]} \leq O\pt{ n^\half \eta^3}
\end{align*}We deduce that:
 $\vertif{\E[\cal G - \cal G_-]} = \sup_{\vertif{B}=1}\verti{ \Tr\left( B  \E[\cal G - \cal G_-] \right) } \leq O\pt{n^{- \frac 1 2} 
|z| \eta^3  }$.

For the second term, $\Tr\pt{\E\br { B  \frac{\frak a-\frak b}{\frak a}\cal G_- x x^\top}} = \E\br{(\frak a - \E[\frak a]) \frac \xi {\frak a}}$ with $\xi = x^\top B \cal G_- x$. From Proposition \ref{VarianceQuadraticForms} $\Var[\xi] \leq O\pt{\eta^2}$ uniformly in $B$, and $\verti{\E[\xi]}=\E\br{\Tr\pt{B \cal G_- \Sigma}} \leq p^\half \eta \leq O ( n^\half \eta)$. We bound the variance of $\frac \xi {\frak a}$ using Propositions \ref{PropertiesAandB} and \ref{VarianceProduct}: 
\begin{align*}
\Var\br{\frac \xi {\frak a}}  &\leq 2 \, \vertii{{\frak a}^{-1} }_\infty^2 \Var[\xi]  + 2 \Var\br{{\frak a}^{-1} } \verti{ \E[\xi]}^2 \\
&\leq O\pt{\eta^4+ \eta^6 |z|} \leq O(\eta^6 |z|)
\end{align*}
where we used that $1 \leq O(\eta) \leq O(\eta^2 |z|)$.
Cauchy-Schwartz inequality again implies that:
\[ \verti{\E\br{(\frak a - \E[\frak a]) \frac \xi {\frak a}}}^2 \leq \Var\br{\frak a} \Var\br{\frac \xi {\frak a}}   \leq O\pt{n^{-1} \eta^8 |z|^3 }\] We deduce that:
\begin{align*}
    \vertif{ \E\left[ \frac{\frak a-\frak b}{\frak a}\cal G_- x x^\top \right]} = \sup_{\vertif{B}=1}\verti{\Tr\pt{\E\br { B  \frac{\frak a-\frak b}{\frak a}\cal G_- x x^\top}} }
    \leq O\pt{n^{-\half}\eta^4 |z|^{\frac 3 2}}
\end{align*}
\subsubsection{Third step:} We wrap up the proof by combining the previous estimates:
\begin{align*}
\vertif{\E\br{\cal G} - \bf G^{\frak b}} &\leq  \vertiii{{\frac {z}{\frak b}}  \bf G^{\frak b}}\cdot \pt{  \vertif{\E\left[\cal G - \cal G_-\right] }\vertiii{\Sigma } + \vertif{ \E\left[ \frac{\frak a-\frak b}{\frak a}\cal G_- x x^\top \right] }} \\
&\leq \eta  \cdot \pt{ {O\pt{n^{- \frac 1 2} \eta^3  |z|} + O\pt{n^{-\half} \eta^4 |z|^{\frac 3 2}} }  } \\
&\leq  n^{-\half} O\pt{\eta^5 |z|^{\frac 3 2} }.
\end{align*}
\newpage

\section{Second deterministic equivalent }

We continue our analysis under the same assumptions as in Theorem \ref{FirstDetEq}.

\subsection{Reformulation as a fixed point problem}

Let us recall the definitions of $\nu = \mmp\pt{\gamma_n } \boxtimes \mu_{\Sigma}$, $\check \nu = (1-\gamma_n ) \delta_0 + \gamma_n  \nu$ and $\bf G^{\frak l}=  \pt{\frac z {\frak l} \Sigma - z I_p}^{-1} $. We let ${\frak c} = -  { g_{\check \nu}}^{-1}$. In the rest of this section we  will prove that $\bf G^{\frak c}$ is close to $\E\br{\cal G}$. The precise statement constitutes Theorem \ref{SecDetEq}.

We first establish some properties of $\nu$ and $\check \nu$.

\begin{lem}\label{tildenutrueproba}
$\check \nu$ is a probability distribution, and $g_{\check \nu} = \frac{\gamma_n - 1} z  +  {\gamma_n}  g_{\nu}$.
\end{lem}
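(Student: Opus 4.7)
The Stieltjes transform identity is immediate from the definition $\check\nu = (1-\gamma_n)\delta_0 + \gamma_n\nu$ and linearity of the Stieltjes transform, using $g_{\delta_0}(z) = -1/z$:
\[ g_{\check\nu}(z) = (1-\gamma_n)\cdot\frac{-1}{z} + \gamma_n g_\nu(z) = \frac{\gamma_n - 1}{z} + \gamma_n g_\nu(z). \]

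For the fact that $\check\nu$ is a probability distribution, note first that its total mass is $(1-\gamma_n) + \gamma_n = 1$, and that on $(0,\infty)$ it agrees with the non-negative measure $\gamma_n\,\nu|_{(0,\infty)}$. Positivity therefore reduces to non-negativity of the atom at the origin, $(1-\gamma_n) + \gamma_n\nu(\{0\})$, which is automatic when $\gamma_n\le 1$. When $\gamma_n>1$, I would prove the required bound $\nu(\{0\})\ge 1-1/\gamma_n$ directly from the defining self-consistent equation for $g_\nu$: setting $m(z):=-zg_\nu(z)\in[0,1]$ and $w(z):=1-\gamma_n+\gamma_n m(z)$ for $z\in\R^{*-}$, the equation rewrites as
\[ m(z) = \int_{\R^+} \frac{-z}{w(z)\,t - z}\,\mu_\Sigma(dt), \]
and passing to $z\to 0^-$ forces either $w(0^-)=0$ (giving $m(0^-)=1-1/\gamma_n$) or $\mu_\Sigma(\{0\})>1-1/\gamma_n$ (giving $m(0^-)=\mu_\Sigma(\{0\})$); in both cases $\nu(\{0\}) = m(0^-)\ge 1-1/\gamma_n$. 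This mirrors the deterministic rank deficiency of a $p\times n$ matrix with $p>n$, where $K = n^{-1}XX^\top$ has at least $p-n$ forced zero eigenvalues.

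The main technical delicacy is this limit analysis, since both numerator and denominator of the integrand vanish simultaneously as $z,w(z)\to 0$, and the correct branch of the fixed point must be selected. A cleaner alternative would be to rearrange the self-consistent equation into the companion Silverstein equation for $\check g := g_{\check\nu}$, namely
\[ \check g(z) = -\bigl(z - \gamma_n \textstyle\int t(1+t\check g(z))^{-1}\mu_\Sigma(dt)\bigr)^{-1}, \]
and invoke the classical result that this admits a unique analytic solution on $\C^+$ which is the Stieltjes transform of some probability measure on $\R^+$; combined with the Stieltjes identity proved above, uniqueness identifies that measure as $\check\nu$.
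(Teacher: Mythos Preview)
Your treatment of the Stieltjes identity and the reduction to checking non-negativity of the atom at $0$ when $\gamma_n>1$ are exactly what the paper does. The divergence is in how the inequality $\nu(\{0\})\ge 1-1/\gamma_n$ is obtained. The paper simply invokes \cite[Theorem~4.1]{belinschi2003atoms} on atoms of free multiplicative convolutions, which gives this bound in one line. You instead propose to extract it from the self-consistent equation for $g_\nu$, either by a direct limit analysis at $z\to 0^-$ or by passing to the companion Silverstein equation for $\check g$.

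Both of your routes are viable in principle, and each has a trade-off against the paper's citation. The direct limit argument is more elementary but, as you yourself note, is delicate: one has to justify that $m(z)=-zg_\nu(z)$ is monotone on $\R^{*-}$ with limit $\nu(\{0\})$, rule out the possibility $w(0^-)<0$ (which a priori is allowed by the range $w(z)\in(1-\gamma_n,1]$), and handle the simultaneous vanishing of numerator and denominator carefully; your dichotomy ``either $w(0^-)=0$ or $\mu_\Sigma(\{0\})>1-1/\gamma_n$'' is correct but needs these pieces spelled out to be a proof. The Silverstein route is cleaner once set up, but you still owe the algebraic verification that $\check g:=(\gamma_n-1)/z+\gamma_n g_\nu$ actually satisfies the companion equation you wrote (this follows from the Mar\v{c}enko--Pastur equation for $g_\nu$ after noting $-z\check g=1-\gamma_n-\gamma_n z g_\nu$, but you should display it). After that, Stieltjes inversion for signed measures identifies $\check\nu$ with the probability measure produced by the classical existence/uniqueness result, and you are done. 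Either way your argument is more self-contained than the paper's, at the cost of a paragraph of extra work.
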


\begin{proof} $\check \nu$ has total mass $1$ and is a positive measure excepted maybe in $\{ 0 \} $ if $\gamma_n > 1$. In this case, from \cite[Theorem 4.1]{belinschi2003atoms} we have $\nu(\{0\})\geq 1 -  {\gamma_n}^{-1}$, thus $\check \nu(\{0\})\geq0$ and $\check \nu$ is a probability measure.

The identity between the Stieltjes transforms is easily verified:
\begin{align*}
g_{\check \nu}(z) &= \int_\R \frac{1}{t-z} \check \nu(dt) \\
    &= \pt{1-\gamma_n} \int _\R\frac{1}{t-z} \delta_0(dt) + \gamma_n \int _\R\frac{1}{t-z}  \nu(dt) \\
    &= \frac{1 - \gamma_n} {-z} + \gamma_n g_\nu(z).
\end{align*}
\end{proof}

Let us recall the definition of $\Omega=(- \infty, z]$ if $z \in \R^{*-}$, and $\Omega = \{ \frak l \in \C$ such that $\Im(\frak l) \geq \Im(z)$ and  $\Im(z^{-1} {\frak l})\geq 0$\} if $z \in \C^+$. We introduce the following functional on $\Omega$:
 \[ \cal F(\frak l) =   z + \frac z n \Tr\pt{\bf G^{\frak l} \Sigma} \]
 
 \begin{prop}\label{cfixedpoint} ${\frak c}$ is a fixed point of $\cal F$ and $\frak c \in \Omega$. Moreover $\frac 1 p \Tr\pt{\bf G^{\frak c}} = g_\nu$.
 \end{prop}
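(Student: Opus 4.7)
The plan is to address the three claims in the order $\frak c \in \Omega$, then $\frac 1 p \Tr(\bf G^{\frak c}) = g_\nu$, then the fixed point equation. The first item is needed up front so that $\bf G^{\frak c}$ is well defined via Lemma \ref{AutomaticBoundDetEq}; the third item then reduces to a straightforward algebraic manipulation once the second is in hand.

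To prove $\frak c \in \Omega$, the key ingredient is that, by Lemma \ref{tildenutrueproba}, $\check\nu$ is a probability measure on $\R^+$, so $g_{\check\nu}$ has the classical properties of Stieltjes transforms. For $z \in \R^{*-}$, the obvious bound $0 < g_{\check\nu}(z) \leq |z|^{-1}$ gives $\frak c = -g_{\check\nu}(z)^{-1} \leq z$ immediately. For $z \in \C^+$, I would write $\Im(\frak c) = \Im(g_{\check\nu})/|g_{\check\nu}|^2$ and $\Im(g_{\check\nu}(z)) = \Im(z)\int |t-z|^{-2}\check\nu(dt)$, and then apply Cauchy--Schwarz in the probability space $(\R^+, \check\nu)$ with the functions $(t-z)^{-1}$ and $1$ to get $|g_{\check\nu}(z)|^2 \leq \int |t-z|^{-2}\check\nu(dt)$; this produces $\Im(\frak c) \geq \Im(z)$. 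For the second Omega condition, a direct computation using $\Supp\check\nu \subset \R^+$ yields $\Im(zg_{\check\nu}(z)) = \Im(z)\int t|t-z|^{-2}\check\nu(dt) \geq 0$, which translates to $\Im(z^{-1}\frak c) = \Im(zg_{\check\nu})/|zg_{\check\nu}|^2 \geq 0$.

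For the trace identity, I would diagonalize $\Sigma$ so that
\[\frac 1 p \Tr\pt{\bf G^{\frak c}} = \int_\R \frac{\mu_\Sigma(dt)}{(z/\frak c)t - z}.\]
Substituting $\frak c^{-1} = -g_{\check\nu}$ and applying Lemma \ref{tildenutrueproba} in the form $-zg_{\check\nu}(z) = 1 - \gamma_n - \gamma_n z g_\nu(z)$, this integrand becomes exactly $\br{(1-\gamma_n-\gamma_n z g_\nu(z))t - z}^{-1}$. The resulting integral is the right-hand side of the Marčenko--Pastur self-consistent equation recalled in the Notations subsection that characterizes $g_\nu = g_{\mmp(\gamma_n)\boxtimes\mu_\Sigma}$, so the integral equals $g_\nu(z)$.

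Finally, for $\cal F(\frak c) = \frak c$, I would use the algebraic identity $(\Sigma - \frak l I_p)^{-1}\Sigma = I_p + \frak l(\Sigma - \frak l I_p)^{-1}$ together with the factorization $\bf G^{\frak l} = (\frak l/z)(\Sigma - \frak l I_p)^{-1}$ to obtain $\bf G^{\frak l}\Sigma = (\frak l/z) I_p + \frak l \bf G^{\frak l}$. Taking traces and plugging in the previous identity with $\frak l = \frak c$ gives $\cal F(\frak c) = z + \gamma_n\frak c(1 + z g_\nu)$. Rearranging the defining relation $\frak c g_{\check\nu} = -1$ via Lemma \ref{tildenutrueproba} produces $\gamma_n\frak c(1 + z g_\nu) = \frak c - z$, which yields $\cal F(\frak c) = \frak c$. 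No serious obstacle is anticipated; the only mild creative step is recognizing the Cauchy--Schwarz argument that converts $\check\nu$ being a probability measure into the $\Omega$-membership of $\frak c$.
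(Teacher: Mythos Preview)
Your proposal is correct and follows essentially the same route as the paper: both use Lemma~\ref{tildenutrueproba} to rewrite $z/\frak c = 1-\gamma_n-\gamma_n z g_\nu$, match the Mar\v{c}enko--Pastur self-consistent equation to get $\frac 1 p \Tr(\bf G^{\frak c}) = g_\nu$, and then reduce $\cal F(\frak c)=\frak c$ to the identity $\gamma_n\frak c(1+zg_\nu)=\frak c-z$. The only difference is cosmetic: the paper computes $\frac z n \Tr(\bf G^{\frak c}\Sigma)$ via the spectral integral $\gamma_n\int \frac{t}{t/\frak c-1}\mu_\Sigma(dt)$ and a partial-fraction step, whereas you use the equivalent matrix identity $\bf G^{\frak l}\Sigma=(\frak l/z)I_p+\frak l\bf G^{\frak l}$; and you spell out the $\frak c\in\Omega$ argument via Cauchy--Schwarz, which the paper simply attributes to ``classical properties of the Stieltjes transforms.''
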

 
 \begin{proof} The fact that $\frak c \in \Omega$  is a consequence of the classical properties of the Stieltjes transforms. Using the properties of the multiplicative free convolution  we have  $g_\nu(z) = \int_\R \frac 1 {\pt{1 - \gamma_n - \gamma_n z g_\nu(z)} t - z}\mu_\Sigma(dt) $. Moreover $\frac z {\frak c} = 1 - \gamma_n - \gamma_n z g_\nu(z)$ using Lemma \ref{tildenutrueproba}. We deduce that $\frac 1 p \Tr\pt{\bf G^{\frak c}} = \int_\R \frac 1 {\frac z {\frak c} t - z} \mu_\Sigma(dt) = g_\nu(z) $.  On the other hand:
\begin{align*}
 \frac z n  \Tr\pt{\bf G^{\frak c} \Sigma} &=  \frac {\gamma_n  } p \Tr \pt{\pt{ \frac \Sigma {\frak c} - I_p}^{-1}\Sigma} \\
 &= \gamma_n \int_\R \frac{ t }{  \frac t {\frak c}-1}\mu_\Sigma(d t)  \\
 &= \gamma_n  \int_\R \frak c \pt{1 + \frac 1 {\frac t {\frak c }-1}} \mu_\Sigma(dt) \\
 &= \gamma_n \frak c \pt{ 1 +  z g_\nu(z)} = \frak c - z \end{align*}
 This proves that $\frak c$ is indeed a fixed point of $\cal F$.\end{proof}

 \begin{rem} We will prove in Corollaries \ref{UniquenessCRealCase} and \ref{UniquenessCComplexCase} that  $\frak c$ is the unique fixed point of $\cal F$.  
 The two characterizations we have for $\frak c$ correspond to the two characterizations we mentioned for $\frak a$ and $\frak b$. 
 Indeed by considering that $\E\br{\cal G_-} \approx \E\br{\cal G} \approx \bf G^{\frak b}$ in the definition of $\frak b$, we can see that $\frak b \approx z + \frac z n \Tr\pt{ \bf G^{\frak b} \Sigma }$. In other terms $\frak b$ is an approximate fixed point of $\cal F$, and we can hope that it will get close to $\frak c$ which is the true fixed point of $\cal F$. 
 
 The definition of $\frak c$ using Stieltjes transforms also admits a heuristic interpretation using the co-resolvent $\check {\cal G}$.  
 We will later understand that $\nu = \mmp\pt{\gamma_n } \boxtimes \mu_{\Sigma}$ is a good approximation of $\mu_K$, the spectral distribution of $K$, in the sense that their Stieltjes transforms are close. On the other hand, the spectra of $K = n^{-1} X X^\top$ and $\check K = n^{-1} X^\top X$ differ by $|n-d|$ zeroes, equivalently $\mu_{\check K} = (1-\gamma_n) \delta_0 + \gamma_n \mu_K$. The measure $ \check \nu = (1-\gamma_n ) \delta_0 + \gamma_n  \nu$ is therefore a good approximation of $\mu_{\check K}$. With the help of concentration arguments, it makes sense that $\frak b^{-1} = \E\br{\frak a}^{-1} =\E\br{- \check {\cal G}_{11} ^{-1}}^{-1} \approx  \E\br{- g_{\check K}}^{-1} \approx - g_{\check \nu}^{-1}$, which enlightens the choice of $\frak c =- g_{\check \nu}^{-1}$.
 \end{rem}

 Our strategy for the rest of this section is as follows: we quantify the error made when saying that $\frak b$ is an approximate fixed point of $\cal F$ in the next proposition. We then study the stability of the fixed point equation to control the gap between $\frak b$ and $\frak c$. This analysis will rely on the fact that $\cal F$ is a contraction mapping, with slightly different arguments in the real and in the complex case. We finally deduce that $\bf G^{\frak c}$ is close to $\bf G^{\frak b}$, itself close to $\E\br{\cal G}$, which proves Theorem \ref{SecDetEq}.

 \begin{prop} \label{b-Fb}$\verti{\frak b - \cal F(\frak b) }\leq  O\pt{n^{-1} \eta^5 |z|^{\frac 5 2} }$.
 \end{prop}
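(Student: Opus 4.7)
The plan is to observe that the difference $\frak b - \cal F(\frak b)$ can be expressed directly in terms of the matrix error $\E[\cal G_-] - \bf G^{\frak b}$, for which good estimates are already available from Theorem~\ref{FirstDetEq} and its proof. Specifically, from the definitions $\frak b = z + \frac z n \Tr(\Sigma \E[\cal G_-])$ and $\cal F(\frak b) = z + \frac z n \Tr(\bf G^{\frak b} \Sigma)$, a one-line computation gives
\[
\frak b - \cal F(\frak b) \;=\; \frac{z}{n}\,\Tr\!\bigl(\Sigma\,(\E[\cal G_-] - \bf G^{\frak b})\bigr).
\]

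I would then bound $\vertif{\E[\cal G_-] - \bf G^{\frak b}}$ by triangle inequality, splitting it as $\vertif{\E[\cal G - \cal G_-]} + \vertif{\E[\cal G] - \bf G^{\frak b}}$. The second piece is exactly the content of Theorem~\ref{FirstDetEq}, which gives $O(n^{-1/2}\eta^5 |z|^{3/2})$; the first piece was established as an intermediate step in the second part of the proof of that theorem, where the bound $\vertif{\E[\cal G - \cal G_-]} \leq O(n^{-1/2}|z|\eta^3)$ was shown via the LOO identity and the concentration estimates on the quadratic form $x^\top \cal G_- B \cal G_- x$. Under the standing assumptions $\eta |z| \geq 1$ and $|z|$ bounded, the first term dominates, so altogether $\vertif{\E[\cal G_-] - \bf G^{\frak b}} \leq O(n^{-1/2}\eta^5 |z|^{3/2})$.

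Finally, to convert this Frobenius bound into a scalar bound on the trace, I would use the Cauchy--Schwarz-type inequality $|\Tr(\Sigma M)| \leq \vertif{\Sigma}\vertif{M}$ together with the observation that $\Sigma$ is positive semi-definite with $\vertiii{\Sigma} \leq 1$, so $\vertif{\Sigma} \leq p^{1/2}\vertiii{\Sigma} \leq p^{1/2}$. Using $p = \gamma_n n = O(n)$ from Assumption~\ref{Ass}.(3), this yields
\[
|\frak b - \cal F(\frak b)| \;\leq\; \frac{|z|}{n}\, p^{1/2}\, \vertif{\E[\cal G_-] - \bf G^{\frak b}} \;\leq\; \frac{|z|}{n} \cdot O(n^{1/2}) \cdot O(n^{-1/2}\eta^5 |z|^{3/2}) \;=\; O\!\bigl(n^{-1}\eta^5 |z|^{5/2}\bigr),
\]
which is the claimed bound. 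There is no real obstacle here: the argument is purely a repackaging of estimates proved in the previous section, the main point being the algebraic identity in the first display that links $\frak b - \cal F(\frak b)$ to the already-controlled matrix error.
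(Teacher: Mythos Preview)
Your proof is correct and follows essentially the same route as the paper: both start from the identity $\frak b - \cal F(\frak b) = \frac z n \Tr\bigl(\Sigma(\E[\cal G_-] - \bf G^{\frak b})\bigr)$, split $\E[\cal G_-] - \bf G^{\frak b}$ via $\E[\cal G]$ into the two pieces already controlled in the proof of Theorem~\ref{FirstDetEq}, and then absorb the trace with $\Sigma$ at the cost of a factor $p^{1/2} = O(n^{1/2})$. One small wording slip: it is the term $O(n^{-1/2}\eta^5|z|^{3/2})$ (the Theorem~\ref{FirstDetEq} bound) that dominates $O(n^{-1/2}|z|\eta^3)$ under the standing assumption $1 \leq O(\eta)$, not the other way around; your final inequality is unaffected.
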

 
 \begin{proof}Let us remember the estimates obtained in the proof of Theorem \ref{FirstDetEq}: $\vertif{\E[\cal G - \cal G_-]}\leq  O\pt{n^{- \frac 1 2} 
|z| \eta^3  }$ and $\vertif{\E[\cal G] - \bf G^{{\frak b} }} \leq   O\pt{n^{-\half} \eta^5 |z|^{\frac 3 2} }$. We deduce that:  \begin{align*}
\verti{\frak b - \cal F( {\frak b}) } 
 &= \verti{ \frac {{z}} n  \Tr\pt{\pt{\E[\cal G_-]- \bf G^{{\frak b}}} \Sigma } }\\
 &\leq \frac{|z|}{ n } p^{\half}  \pt{  \vertif{ \E[\cal G_- - \cal G]} + \vertif{\E[\cal G] - \bf G^{ {\frak b} }}}  \\
  &\leq O\pt{n^{-1} \eta^5 |z|^{\frac 5 2} }.
\end{align*}\end{proof}

\subsection{Stability of $\cal F$ in the real case}

Let us working with $z$ belonging to $\R^{*-}$. In this context we recall that $\Omega = (- \infty, z]$. Our first objective in this section is to prove that $\cal F$ is a contraction mapping around $\frak c$ for the usual distance on $\R$.

\begin{lem} For any $\frak l \in \Omega$, $z - \gamma_n \leq \cal F(\frak l) \leq z \leq 0 $. In particular $1 \leq \frac {\frak c} z \leq 1 + \frac{\gamma_n} {|z|}$.
\end{lem}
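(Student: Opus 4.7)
The plan is to exploit the fact that in the real case both $z$ and $\frak{l}$ are negative with $\frak{l} \leq z$, which makes $\bf{G}^{\frak{l}}$ and $\Sigma$ simultaneously diagonalizable with explicit eigenvalues, and the trace in $\cal{F}(\frak{l})$ computable as a sum with definite sign.

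First I would rewrite $\bf{G}^{\frak{l}} \Sigma$ in an eigenbasis of $\Sigma$. Writing the eigenvalues of $\Sigma$ as $\lambda_1, \dots, \lambda_p \in [0, \vertiii{\Sigma}] \subset [0,1]$ (using $\vertiii{\Sigma} \leq 1$), the matrix $\bf{G}^{\frak{l}} \Sigma$ is diagonal with eigenvalues
\[\frac{\lambda_i}{\tfrac{z}{\frak{l}} \lambda_i - z}.\]
Since $z < 0$ and $\frak{l} \leq z < 0$, we have $\tfrac{z}{\frak{l}} \in (0,1]$, so each denominator satisfies $\tfrac{z}{\frak{l}} \lambda_i - z \geq -z = |z| > 0$. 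In particular every such eigenvalue is non-negative, hence $\Tr(\bf{G}^{\frak{l}} \Sigma) \geq 0$.

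For the upper bound $\cal{F}(\frak{l}) \leq z$, I would just combine $z < 0$ with $\Tr(\bf{G}^{\frak{l}} \Sigma) \geq 0$ in $\cal{F}(\frak{l}) = z + \tfrac{z}{n} \Tr(\bf{G}^{\frak{l}} \Sigma)$. For the lower bound, the key observation is that the denominator bound above gives
\[\frac{\lambda_i}{\tfrac{z}{\frak{l}} \lambda_i - z} \leq \frac{\lambda_i}{|z|},\]
so $\Tr(\bf{G}^{\frak{l}} \Sigma) \leq |z|^{-1} \Tr(\Sigma) \leq |z|^{-1} p \vertiii{\Sigma} \leq |z|^{-1} p$. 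Multiplying by $z/n$ gives $\tfrac{z}{n} \Tr(\bf{G}^{\frak{l}} \Sigma) \geq -\gamma_n$, hence $\cal{F}(\frak{l}) \geq z - \gamma_n$. Finally $z < 0$ by assumption.

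For the consequence on $\frak{c}$, I would invoke Proposition \ref{cfixedpoint}: $\frak{c} \in \Omega$ and $\cal{F}(\frak{c}) = \frak{c}$. Applying the bounds just proved with $\frak{l} = \frak{c}$ yields $z - \gamma_n \leq \frak{c} \leq z < 0$. Dividing by the negative number $z$ reverses the inequalities and gives $1 \leq \tfrac{\frak{c}}{z} \leq 1 + \tfrac{\gamma_n}{|z|}$, as required.

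There is no real obstacle here; the argument is essentially a diagonalization plus sign-tracking. The only subtlety is making sure to use the normalization $\vertiii{\Sigma} \leq 1$ (introduced at the start of Section 5) to turn $\Tr(\Sigma)$ into $p$, and to remember that dividing an inequality by the negative number $z$ flips it.
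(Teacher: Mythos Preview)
Your proof is correct and follows essentially the same approach as the paper: both establish $0 \leq \Tr(\bf G^{\frak l}\Sigma) \leq p|z|^{-1}$ and deduce the bounds on $\cal F(\frak l)$ and then on $\frak c/z$. The only cosmetic difference is that the paper argues abstractly (noting that $\bf G^{\frak l}$ and $\Sigma$ commute and are both positive semi-definite, with $\vertiii{\bf G^{\frak l}}\leq |z|^{-1}$), whereas you write out the eigenvalues explicitly.
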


\begin{proof}

The matrices $\bf G^{\frak l}$ and $\Sigma$ commute and are both positive semi-definite, hence the  product $\bf G^{\frak l} \Sigma$ is  positive semi-definite. We deduce that $0 \leq \Tr\pt{ \bf G^{\frak l} \Sigma} \leq  p |z|^{-1}$, and 
$- \gamma_n \leq \cal F(\frak l) - z = \frac z n \Tr\pt{ \bf G^{\frak l} \Sigma}  \leq 0$.

Applying these bounds to $\frak c = \cal F(\frak c)$ proves the second assertion.
\end{proof}
\begin{lem} \label{SpecialBoundReal}For any $\frak l \in \Omega$,  $\vertiii{\frac z {\frak l}\bf G^{\frak l} \Sigma} \leq  \frac 1 {1+|z|}$.
\end{lem}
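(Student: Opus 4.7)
The plan is to use the fact that $\bf{G}^{\frak{l}}$ is a function of $\Sigma$, so it is simultaneously diagonalizable with $\Sigma$, and the quantity $\frac z {\frak l} \bf{G}^{\frak l} \Sigma$ reduces to a scalar function of the eigenvalues of $\Sigma$ that we can bound by elementary means. Since $z < 0$ and $\frak{l} \leq z < 0$ we have $\frac{z}{\frak{l}} \in (0,1]$, and for any eigenvalue $\lambda \in \Sp \Sigma \subset [0, \vertiii{\Sigma}] \subset [0,1]$, the corresponding eigenvalue of $\frac z {\frak l} \bf{G}^{\frak l} \Sigma$ is
\[ \frac{(z/\frak{l}) \lambda}{(z/\frak{l})\lambda - z} \;=\; \frac{\lambda}{\lambda - \frak{l}}. \]

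Next I would observe that this quantity is nonnegative since $\lambda \geq 0$ and $\lambda - \frak{l} \geq \lambda + |z| > 0$, so the spectral norm equals the maximum of $\lambda \mapsto \frac{\lambda}{\lambda + |\frak{l}|}$ over $\lambda \in \Sp\Sigma$. As this function is increasing in $\lambda$, and $\lambda \leq \vertiii{\Sigma} \leq 1$, the maximum is bounded by $\frac{1}{1 + |\frak{l}|}$. Finally, $|\frak{l}| \geq |z|$ (since $\frak{l} \leq z < 0$) yields $\frac{1}{1+|\frak{l}|} \leq \frac{1}{1+|z|}$.

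There is no real obstacle here: the only subtlety is to keep track of signs and absolute values carefully, writing $-\frak{l} = |\frak{l}|$ and $-z = |z|$, and to use the normalization $\vertiii{\Sigma} \leq 1$ (which is available without loss of generality from the start of Section 5). The whole argument is a single eigenvalue computation combined with monotonicity in $\lambda$ and in $|\frak{l}|$.
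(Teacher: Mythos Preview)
Your proof is correct and essentially identical to the paper's own argument: both compute the eigenvalues of $\frac z {\frak l}\bf G^{\frak l} \Sigma$ as $\frac{\lambda}{\lambda - \frak l}$, then use that $\lambda \in [0,1]$ and $\frak l \leq z < 0$ together with monotonicity to bound them by $\frac{1}{1+|z|}$. The only cosmetic difference is the order in which the two monotonicity steps are applied.
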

\begin{proof}The eigenvalues of $\frac z {\frak l}\bf G^{\frak l} \Sigma= \pt{ \Sigma - \frak l I_p}^{-1} \Sigma $ are given by $\frac \lambda{\lambda - l}$ where $\lambda$ are the eigenvalues of $\Sigma$. All $\lambda$ belong to $[0,1]$ and $l \leq z < 0$, so we have the following inequalities:\[ 0 \leq  \frac \lambda {\lambda - \frak l} = \frac \lambda {\lambda +|\frak l|}\leq \frac \lambda {\lambda +| z|} \leq \frac 1 {1+|z|} .\] We deduce that all the eigenvalues of $\frac z {\frak l}\bf G^{\frak l} \Sigma$ belong to $\left[0, \frac 1 {1+|z|} \right]$, which proves the bound in spectral norm.
\end{proof}

\begin{prop} $\cal F$ is a contraction mapping around $\frak c$. More precisely, $\cal F$ is $k_{\cal F}$-Lipschitz around $\frak c$ with $k_{\cal F} = \frac{\gamma_n}{(1+|z|)(\gamma_n+|z|)} < 1 $.
\end{prop}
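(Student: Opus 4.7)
The plan is to exploit the fact that $\bf G^{\frak l}$ and $\Sigma$ commute and are simultaneously diagonalizable, reducing all traces to scalar sums over the eigenvalues $\lambda_1,\dots,\lambda_p$ of $\Sigma$ (which lie in $[0,1]$ by the normalization $\vertiii{\Sigma}\leq 1$). I will prove the one-sided Lipschitz bound $\verti{\cal F(\frak l) - \cal F(\frak c)} \leq k_{\cal F}\verti{\frak l - \frak c}$ for every $\frak l \in \Omega$, which is exactly what is needed later to compare $\frak b$ and $\frak c$ via the usual fixed-point trick $\verti{\frak b - \frak c} \leq \verti{\frak b - \cal F(\frak b)}/(1-k_{\cal F})$.

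First I would apply the resolvent identity to $A = \frac{z}{\frak l}\Sigma - z I_p$ and $B = \frac{z}{\frak c}\Sigma - z I_p$, whose difference is $z\frac{\frak l - \frak c}{\frak l\,\frak c}\Sigma$. Multiplying by $\Sigma$, tracing, and using the common eigenbasis, the $z^2$ and $\frak l\,\frak c$ prefactors telescope cleanly to yield
\[ \cal F(\frak l) - \cal F(\frak c) \;=\; \frac{\frak l - \frak c}{n}\sum_{i=1}^p\frac{\lambda_i^2}{(\lambda_i - \frak l)(\lambda_i - \frak c)}. \]
Since $\frak l, \frak c \in \Omega = (-\infty, z]$, we have $\lambda_i - \frak l = \lambda_i + \verti{\frak l} > 0$ and similarly for $\frak c$, so all summands are non-negative.

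Next I would bound the $\frak l$-factor crudely using $\verti{\frak l} \geq \verti{z}$ and $\lambda_i \leq 1$, which gives $\frac{\lambda_i}{\lambda_i + \verti{\frak l}} \leq \frac{\lambda_i}{\lambda_i + \verti{z}} \leq \frac{1}{1+\verti{z}}$. The $\frak c$-factor, however, must be handled with care. The key observation is that the fixed-point equation $\frak c = z + \frac{z}{n}\Tr(\bf G^{\frak c}\Sigma)$, after rewriting $\Tr(\bf G^{\frak c}\Sigma)=\frac{\verti{\frak c}}{\verti{z}}\sum_i\frac{\lambda_i}{\lambda_i + \verti{\frak c}}$ and simplifying signs, is equivalent to the closed-form identity
\[ \frac{1}{n}\sum_{i=1}^p \frac{\lambda_i}{\lambda_i + \verti{\frak c}} \;=\; 1 - \frac{\verti{z}}{\verti{\frak c}}. \]
Combining with the previous lemma's envelope $\verti{\frak c} \leq \verti{z} + \gamma_n$ (equivalent to $1 \leq \frak c/z \leq 1 + \gamma_n/\verti{z}$) yields $1 - \verti{z}/\verti{\frak c} \leq \gamma_n/(\verti{z}+\gamma_n)$, and the two bounds multiply to deliver exactly $k_{\cal F} = \frac{\gamma_n}{(1+\verti{z})(\gamma_n + \verti{z})}$. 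Strictness $k_{\cal F} < 1$ is immediate from $(1+\verti{z})(\gamma_n+\verti{z}) - \gamma_n = \verti{z}(1+\gamma_n+\verti{z}) > 0$ whenever $\verti{z}>0$.

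The main obstacle, and the only non-routine step, is resisting a brute-force bound on the trace sum: the slick point is that after peeling off one factor of $1/(1+\verti{z})$, the residual sum is not just \emph{bounded} but \emph{identified} by the very fixed-point equation that defines $\frak c$. Without this self-referential identity one would only obtain the weaker constant $\gamma_n/(1+\verti{z})^2$, which fails to be $<1$ in the regime $\gamma_n > 1$, $\verti{z}$ small; the sharper denominator $\gamma_n+\verti{z}$ in the statement is precisely what this identity buys.
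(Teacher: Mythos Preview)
Your proof is correct and follows essentially the same route as the paper: after the resolvent identity, you peel off one factor bounded by $1/(1+|z|)$ and identify the remaining sum via the fixed-point equation for $\frak c$, then use the envelope $|\frak c|\leq |z|+\gamma_n$. The only cosmetic difference is that you diagonalize and work with scalar eigenvalue sums, whereas the paper keeps everything in matrix/trace form; the key self-referential step you highlight is exactly the one the paper singles out.
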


\begin{proof}
For any $\frak l $, using a resolvent identity:
\begin{align*}
   \cal F(\frak c) - \cal F(\frak l) &=  \frac z n \Tr\pt{\pt{\bf G^{\frak c}-\bf G^{\frak l}} \Sigma} \\&=  \frac z n \Tr\pt{\bf G^{\frak c}\pt{ \frac z {\frak l} -\frac z {\frak c}}\Sigma \bf G^{\frak l} \Sigma} \\
   &=  \frac{\frak c -\frak l  } n  \Tr\pt{\frac z {\frak c} \bf G^{\frak c}\Sigma \frac z {\frak l} \bf G^{\frak l} \Sigma}.
   \end{align*}
A straightforward application of the last lemma would only yield a Lipschitz factor of $ {\gamma_n } {(1+|z|)^{-2}}$ which may not be small enough. To go beyond, we exploit the fact that  $\frak c$ is a fixed point of $\cal F$, in particular $1 - \frac z  {\frak c}  = \frac 1 n \Tr\pt{\frac z {\frak c}\bf G^{\frak c}\Sigma}$. All the matrices commute in the above expression, and we can use the bounds of the preceding lemmas to obtain:
  \begin{align*}
      \verti{\cal F(\frak c) - \cal F(\frak l)}&\leq \verti{\frak c -\frak l } \cdot \verti{\frac 1 n \Tr\pt{\frac z {\frak c}\bf G^{\frak c}\Sigma}} \cdot \vertiii{\frac z {\frak l}\bf G^{\frak l} \Sigma} \\
      &\leq \verti{\frak c -\frak l } \cdot \pt{1 - \frac z {\frak c}} \cdot \frac 1 {1+|z|} \\
      &\leq  \verti{\frak c -\frak l } \cdot \pt{1 - \frac 1 {1 + \frac {\gamma_n} {|z|}}}\cdot \frac 1 {1+|z|} \\
      &=\verti{\frak c -\frak l } \cdot \frac{\gamma_n}{(1+|z|)(\gamma_n+|z|)}.
  \end{align*}
   \end{proof}

  \begin{cor} \label{UniquenessCRealCase}
In the real case, ${\frak c}$ is the unique fixed point of $\cal F$.
  \end{cor}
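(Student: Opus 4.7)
The plan is to derive the uniqueness directly from the contraction estimate proved in the preceding proposition. Indeed, the Lipschitz bound established there is not merely a local statement at $\frak c$: inspection of its proof shows that for \emph{any} $\frak l \in \Omega$ one has
\[ |\cal F(\frak c) - \cal F(\frak l)| \leq k_{\cal F}\, |\frak c - \frak l|, \qquad k_{\cal F} = \frac{\gamma_n}{(1+|z|)(\gamma_n+|z|)} < 1. \]
Since $\frak c$ is itself a fixed point of $\cal F$ (Proposition \ref{cfixedpoint}), this reads $|\frak c - \cal F(\frak l)| \leq k_{\cal F}\,|\frak c - \frak l|$ for all $\frak l \in \Omega$.

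Now suppose $\frak l' \in \Omega$ is any fixed point of $\cal F$, i.e.\ $\cal F(\frak l') = \frak l'$. Applying the estimate with $\frak l = \frak l'$ gives
\[ |\frak c - \frak l'| \;=\; |\cal F(\frak c) - \cal F(\frak l')| \;\leq\; k_{\cal F}\, |\frak c - \frak l'|. \]
Because $k_{\cal F} < 1$, this inequality can only hold if $|\frak c - \frak l'| = 0$, hence $\frak l' = \frak c$. This proves uniqueness.

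The only thing one should double-check is that the inequality from the preceding proposition was really established uniformly over $\frak l \in \Omega$, not merely in a neighborhood of $\frak c$; the computation given there (using the resolvent identity together with Lemma \ref{SpecialBoundReal} and the fixed-point identity $1 - z/\frak c = \frac 1 n \Tr(\frac z {\frak c}\bf G^{\frak c}\Sigma)$) makes no assumption on $\frak l$ beyond membership in $\Omega$, so the argument applies verbatim. No further difficulty is expected.
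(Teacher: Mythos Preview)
Your proof is correct and essentially identical to the paper's: both apply the contraction estimate $|\cal F(\frak c)-\cal F(\frak l)|\le k_{\cal F}|\frak c-\frak l|$ with $k_{\cal F}<1$ to an arbitrary fixed point $\frak l$ and conclude $\frak l=\frak c$. Your additional remark that the preceding proposition was proved for all $\frak l\in\Omega$ (not just locally) is a useful clarification.
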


  \begin{proof}
If $\frak l$ is a fixed point of $\cal F$, then $\verti{\frak l - \frak c}=\verti{\cal F(\frak l) - \cal F( \frak c)} \leq k_{\cal F} \verti{\frak l - \frak c}$. Given that $ k_{\cal F} < 1$, necessarily $\verti{\frak l - \frak c} = 0$ and $\frak l$ must be equal to $\frak c$. \end{proof}

 \begin{prop}\label{GapBCRealCase}
 $\verti{\frak b - \frak c}\leq O\pt{n^{-1}  |z|^{- \frac 7 2}}$.
 \end{prop}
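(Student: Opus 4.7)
The plan is to combine the near-fixed-point estimate of Proposition \ref{b-Fb} with the contraction property just established, in the classical Banach fixed-point style. Since $\frak b \in \Omega$ by Proposition \ref{PropertiesAandB} and $\frak c$ is the fixed point of $\cal F$ on $\Omega$, the triangle inequality gives
\[ \verti{\frak b - \frak c} \;\leq\; \verti{\frak b - \cal F(\frak b)} + \verti{\cal F(\frak b) - \cal F(\frak c)} \;\leq\; \verti{\frak b - \cal F(\frak b)} + k_{\cal F}\verti{\frak b - \frak c}, \]
so after rearrangement
\[ \verti{\frak b - \frak c} \;\leq\; \frac{\verti{\frak b - \cal F(\frak b)}}{1 - k_{\cal F}}. \]

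Next I would compute $1 - k_{\cal F}$ explicitly. Expanding
\[ 1 - k_{\cal F} \;=\; \frac{(1+|z|)(\gamma_n+|z|) - \gamma_n}{(1+|z|)(\gamma_n+|z|)} \;=\; \frac{|z|(1+\gamma_n+|z|)}{(1+|z|)(\gamma_n+|z|)}, \]
so that
\[ \frac{1}{1 - k_{\cal F}} \;=\; \frac{(1+|z|)(\gamma_n+|z|)}{|z|(1+\gamma_n+|z|)} \;\leq\; O\!\pt{|z|^{-1}}, \]
using the standing assumptions that $|z|$ is bounded above and that $C^{-1} \leq \gamma_n \leq C$ (all remaining factors are $O(1)$).

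Finally, plugging in Proposition \ref{b-Fb}, which in the real case (where $\eta = |z|^{-1}$) reads $\verti{\frak b - \cal F(\frak b)} \leq O(n^{-1} |z|^{-5/2})$, yields
\[ \verti{\frak b - \frak c} \;\leq\; O\!\pt{|z|^{-1}} \cdot O\!\pt{n^{-1} |z|^{-5/2}} \;=\; O\!\pt{n^{-1}|z|^{-7/2}}, \]
which is the claim. The only conceptual point is that the factor $1/(1-k_{\cal F})$ degrades like $|z|^{-1}$ as $|z| \to 0$; this is the source of the extra $|z|^{-1}$ compared with the ``near fixed point'' bound. There is no real obstacle here, since everything needed (contraction of $\cal F$, bound on $\verti{\frak b - \cal F(\frak b)}$, membership $\frak b \in \Omega$) has already been proved just above.
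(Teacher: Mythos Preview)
Your proposal is correct and follows essentially the same approach as the paper: the same triangle-inequality rearrangement $\verti{\frak b - \frak c} \leq \verti{\frak b - \cal F(\frak b)}/(1-k_{\cal F})$, the same use of Proposition~\ref{b-Fb}, and the same conclusion that $(1-k_{\cal F})^{-1} \leq O(|z|^{-1})$. The only cosmetic difference is that you expand $1-k_{\cal F}$ exactly, whereas the paper bounds $k_{\cal F} \leq (1+|z|)^{-1}$ first; both routes give the same $O(|z|^{-1})$ factor.
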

 \begin{proof}With $k_{\cal F}= \frac{\gamma_n}{(1+|z|)(\gamma_n+|z|)} \leq \frac 1{1 + |z|}< 1$, by Proposition  
\begin{align*}
   \verti{ \frak b - \frak c} \leq \verti{\frak b - \cal F(\frak b)} + \verti{\cal F(\frak b)  - \cal F(\frak c)} 
   \leq \verti{\frak b - \cal F(\frak b)} + k_{\cal F} \verti{ \frak b - \frak c}.
\end{align*}
Given that $\verti{\frak b - \cal F(\frak b)}\leq {O\pt{n^{-1} |z|^{- \frac 5 2} } }$ by Proposition \ref{b-Fb} and $\frac 1 {1 - k_{\cal F}} \leq \frac{1+|z|}{|z|} \leq O(|z|^{-1})$, we deduce that $\verti{ \frak b - \frak c} \leq \frac { \verti{\frak b - \cal F(\frak b)}}{1 - k_{\cal F}}\leq {O\pt{n^{-1} |z|^{- \frac 7 2} } } $.
\end{proof}

\subsection{Stability of $\cal F$ in the complex case}

We are now working with $z$ belonging to $\C^+$. In this context we recall that $\Omega = \{ \frak l \in \C$ such that $\Im(\frak l) \geq \Im(z)$ and  $\Im(z^{-1} {\frak l})\geq 0\}$. As in the last section, we would like $\cal F$ to be a contraction mapping. To this end we need  to ditch the usual metric on $\C^+$, and work instead with the following semi-metric:
\[ d(\omega_1, \omega_2) = \frac{\verti{\omega_1 - \omega_2}}{\Im(\omega_1)^\half \Im(\omega_2)^\half}.\]
\begin{lem}\label{ImFl} For any $\frak l \in \Omega$, $\Im(z) \leq \Im\pt{\cal F(\frak l)} \leq \Im(z) + \gamma_n |z| \eta$, and:
\begin{align*}
  \Im(\cal F(l)) - \Im  (z) &=  \frac  {|z|^2}  n \frac{\Im ({\frak l})}{|\frak l|^2} \vertif{\Sigma \bf G^{\frak l}}^2 .
\end{align*}
\end{lem}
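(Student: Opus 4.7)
The plan is to derive the explicit identity first and then extract the two inequalities from it (or, for the upper bound, from a simpler direct estimate). The starting observation is that since $\frac{z}{\frak l}\Sigma - zI_p = \frac{z}{\frak l}(\Sigma - \frak l I_p)$, one has the factorization $\bf G^{\frak l} = \frac{\frak l}{z}(\Sigma - \frak l I_p)^{-1}$. This gives $\cal F(\frak l) - z = \frac{z}{n}\Tr(\bf G^{\frak l}\Sigma) = \frac{\frak l}{n}\Tr(\Sigma(\Sigma - \frak l I_p)^{-1})$, reducing the computation of the imaginary part of $\cal F(\frak l) - z$ to that of a matrix Stieltjes-type quantity involving only $\Sigma$.

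Call that quantity $w$. I would first rewrite $\frak l(\Sigma - \frak l I_p)^{-1} = \Sigma(\Sigma - \frak l I_p)^{-1} - I_p$, which ensures that the constant term $\Tr\Sigma$ will cancel when we form $w - \bar w$ (note that, since $\Sigma$ is real symmetric, $\bar w$ is simply obtained by replacing $\frak l$ by $\bar{\frak l}$). This leaves $w - \bar w = \frac{1}{n}\Tr\bigl(\Sigma^{2}\bigl[(\Sigma - \frak l I_p)^{-1} - (\Sigma - \bar{\frak l} I_p)^{-1}\bigr]\bigr)$, and the standard resolvent identity yields $(\Sigma - \frak l I_p)^{-1} - (\Sigma - \bar{\frak l} I_p)^{-1} = 2i\Im(\frak l)(\Sigma - \frak l I_p)^{-1}(\Sigma - \bar{\frak l} I_p)^{-1}$. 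Dividing by $2i$ and using that $\Sigma$ is self-adjoint and commutes with its own resolvents, so that $\Sigma(\Sigma - \bar{\frak l} I_p)^{-1}$ is the Hermitian adjoint of $\Sigma(\Sigma - \frak l I_p)^{-1}$, the trace collapses into a squared Frobenius norm: $\Im w = \frac{\Im(\frak l)}{n}\vertif{\Sigma(\Sigma - \frak l I_p)^{-1}}^{2} = \frac{|z|^{2}\Im(\frak l)}{n|\frak l|^{2}}\vertif{\Sigma\bf G^{\frak l}}^{2}$, which is exactly the stated identity.

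Once the identity is established, the lower bound $\Im(\cal F(\frak l)) \geq \Im(z)$ is immediate since every factor on the right-hand side is non-negative for $\frak l \in \Omega$, in particular $\Im(\frak l) \geq \Im(z) > 0$. For the upper bound I would bypass the identity and apply the crude estimate $|\Im(\cal F(\frak l)) - \Im(z)| \leq \verti{\tfrac{z}{n}\Tr(\bf G^{\frak l}\Sigma)} \leq \tfrac{|z|}{n}\vertif{\bf G^{\frak l}}\vertif{\Sigma} \leq \gamma_n|z|\eta$, using the Cauchy--Schwarz inequality for the Frobenius inner product, the spectral bound $\vertiii{\bf G^{\frak l}} \leq \eta$ from Lemma \ref{AutomaticBoundDetEq} (which gives $\vertif{\bf G^{\frak l}} \leq p^{1/2}\eta$), and the normalization $\vertiii{\Sigma} \leq 1$ granted at the start of Section 5. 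The only genuinely non-mechanical step is the algebraic reshuffling that turns the difference of resolvents into a squared Frobenius norm; everything else is either routine matrix arithmetic or a direct application of Lemma \ref{AutomaticBoundDetEq}.
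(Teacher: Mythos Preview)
Your proof is correct and follows essentially the same route as the paper: both arguments reduce to applying the resolvent identity to the $\Sigma$-resolvent and recognizing the resulting trace as a squared Frobenius norm, and both obtain the upper bound by a crude trace estimate together with $\vertiii{\bf G^{\frak l}}\leq\eta$ and $\vertiii{\Sigma}\leq 1$. The only cosmetic difference is that the paper works directly with $z\bf G^{\frak l}=(\Sigma/\frak l - I_p)^{-1}$ and computes $\Im(z\bf G^{\frak l})$ as a matrix identity before tracing, whereas you pass to $(\Sigma-\frak l I_p)^{-1}$ and insert the rewriting $\frak l(\Sigma-\frak l I_p)^{-1}=\Sigma(\Sigma-\frak l I_p)^{-1}-I_p$ to cancel the real constant $\Tr\Sigma$; this extra step is harmless but not needed in the paper's parametrization.
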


\begin{proof} Applying the resolvent identity to $z \bf G^{\frak l} = \pt{\frac \Sigma {\frak l}  - I_p}^{-1}$ and $\ov{z \bf G^{\frak l}} =  \pt{\frac \Sigma {\ov{\frak l}}  - I_p}^{-1}$ leads to:
\[\Im ( z \bf G^{\frak l} ) =   \frac {1} {2i}  z \bf G^{\frak l} \left( \frac \Sigma {\ov{\frak l}}  - \frac \Sigma {\frak l} \frac \Sigma {\ov{\frak l}} \right)  \ov{ z\bf G^{\frak l}} = |z|^2 \frac{\Im ({\frak l})}{|\frak l|^2} \bf G^{\frak l} \Sigma \ov{\bf G^{\frak l}} .\]
From there we deduce that:
\begin{align*}
\Im(\cal F(l)) &= \Im(z) + \frac 1 n \Tr\pt{\Sigma  \Im\pt{  z \bf G^{\frak l}}} \\ &= \Im(z) + \frac {|z|^2} n\frac{\Im ({\frak l})}{|\frak l|^2} \Tr\pt{ \Sigma \bf G^{\frak l} \Sigma \ov{\bf G^{\frak l}} } \\
&= \Im(z) + \frac  {|z|^2}  n \frac{\Im ({\frak l})}{|\frak l|^2} \vertif{\Sigma \bf G^{\frak l}}^2.\end{align*}
This identity proves the lower bound on $\Im\pt{\cal F(\frak l)}$. The upper bound is a consequence of: $\verti{\frac 1 n \Tr\pt{\Sigma  \Im\pt{  z \bf G^{\frak l}}} } \leq \gamma_n \cdot \vertiii{\Sigma} \cdot \vertiii{  \Im\pt{  z \bf G^{\frak l}}}   \leq \gamma_n |z| \eta$.
\end{proof}

\begin{prop} \label{FcontractionMap}$\cal F$ is a contraction mapping with respect to $d$. More precisely, $\cal F$ is $k_{\cal F}$-Lipschitz with $k_{\cal F} = \frac{\gamma_n |z| \eta^2}{1+\gamma_n |z| \eta^2} < 1$.
\end{prop}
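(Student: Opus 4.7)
The plan is to mimic the algebraic manipulation from the real case, but replace the naive bound in spectral norm by a Cauchy--Schwarz estimate on the Frobenius norm, so that Lemma \ref{ImFl} plugs in naturally to produce the imaginary parts that appear in the definition of the semi-metric $d$.

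First I would use a resolvent identity on $\bf G^{\frak l_1} - \bf G^{\frak l_2} = \bf G^{\frak l_1}\bigl(\frac{z}{\frak l_2} - \frac{z}{\frak l_1}\bigr)\Sigma\, \bf G^{\frak l_2}$ and factor out $\frak l_1 - \frak l_2$ to obtain the exact identity
\begin{align*}
\cal F(\frak l_1) - \cal F(\frak l_2) = \frac{\frak l_1 - \frak l_2}{n}\Tr\!\left(\tfrac{z}{\frak l_1}\bf G^{\frak l_1}\Sigma \,\cdot\, \tfrac{z}{\frak l_2}\bf G^{\frak l_2}\Sigma\right),
\end{align*}
exactly as in the real case. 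The crucial difference is that in the complex case $\vertiii{\frac{z}{\frak l}\bf G^{\frak l}\Sigma}$ need not be small, so a spectral-norm bound is not enough. Instead I would apply Cauchy--Schwarz for the trace inner product:
\begin{align*}
\left|\Tr\!\left(\tfrac{z}{\frak l_1}\bf G^{\frak l_1}\Sigma \cdot \tfrac{z}{\frak l_2}\bf G^{\frak l_2}\Sigma\right)\right| \leq \vertif{\tfrac{z}{\frak l_1}\bf G^{\frak l_1}\Sigma}\,\vertif{\tfrac{z}{\frak l_2}\bf G^{\frak l_2}\Sigma}.
\end{align*}

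Next I would invoke the key identity of Lemma \ref{ImFl}, rewritten as
\begin{align*}
\vertif{\tfrac{z}{\frak l}\bf G^{\frak l}\Sigma}^2 = \frac{|z|^2}{|\frak l|^2}\vertif{\Sigma\bf G^{\frak l}}^2 = \frac{n\,(\Im\cal F(\frak l) - \Im z)}{\Im(\frak l)}.
\end{align*}
Plugging this into the Cauchy--Schwarz bound and dividing by $\Im\cal F(\frak l_1)^{1/2}\Im\cal F(\frak l_2)^{1/2}$ yields
\begin{align*}
d(\cal F(\frak l_1),\cal F(\frak l_2)) \leq d(\frak l_1, \frak l_2)\,\prod_{i=1,2}\left(1 - \frac{\Im z}{\Im \cal F(\frak l_i)}\right)^{\!1/2}.
\end{align*}

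Finally I would use the upper bound $\Im\cal F(\frak l) \leq \Im z + \gamma_n |z|\eta$ from Lemma \ref{ImFl} to dominate each factor by $\bigl(\frac{\gamma_n|z|\eta^2}{1+\gamma_n|z|\eta^2}\bigr)^{1/2}$, using that $\eta = \Im(z)^{-1}$. Multiplying the two factors gives the announced Lipschitz constant $k_{\cal F} = \frac{\gamma_n|z|\eta^2}{1+\gamma_n|z|\eta^2} < 1$. The main obstacle is precisely the step where one recognises that the Frobenius norms produced by Cauchy--Schwarz are exactly what Lemma \ref{ImFl} computes; once that is seen, the contraction drops out of the algebra without needing any further estimate.
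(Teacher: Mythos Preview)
The proposal is correct and follows essentially the same approach as the paper: resolvent identity, Cauchy--Schwarz on the trace to produce Frobenius norms, Lemma \ref{ImFl} to convert those into $\Im\cal F(\frak l)-\Im z$, and finally the upper bound on $\Im\cal F(\frak l)$ to extract the Lipschitz constant. The only cosmetic difference is that you absorb the scalar $\tfrac{z}{\frak l_i}$ into the matrices before taking Frobenius norms, whereas the paper keeps $\tfrac{z^2}{\frak l\frak l'}$ outside; the computations are identical.
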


\begin{proof}
For any $\frak l,\frak l' \in \Omega$, using a resolvent identity:
\begin{align*}
   \cal F(\frak l) - \cal F(\frak l') &=  \frac z n \Tr\pt{\pt{\bf G^{\frak l}-\bf G^{\frak l'}} \Sigma} \\&=  \frac z n \Tr\pt{\bf G^{\frak l}\pt{ \frac z {\frak l'} - \frac z {\frak l} }\Sigma \bf G^{\frak l'} \Sigma} \\
   &= \frac{z^2}n \frac{\frak l-\frak l' }{\frak l  \frak l'} \Tr\pt{\bf G^{\frak l}\Sigma \bf G^{\frak l'} \Sigma}.
   \end{align*}
     We can recognize $d(\frak l,\frak l')= \frac{\verti{\frak l-\frak l'}}{\Im(\frak l)^{\frac 1 2}\Im(\frak l')^{\frac 1 2}}$ and  the expression  for $\Im\pt{\cal F(\frak l)}-\Im(z)$ from the preceding lemma:
     \begin{align*}
   \verti{ \cal F(\frak l) - \cal F(\frak l') }  &\leq  \frac {|z|^2} n \frac{\verti{ \frak l-\frak l'}}{\verti{\frak l}  \verti{\frak l'}} \vertif{\bf G^{\frak l}\Sigma} \vertif{\bf G^{\frak l'}\Sigma}\\
  &= \frac{\verti{ \frak l-\frak l'}}{\Im ({\frak l})^\half\Im ({\frak l'})^\half}\pt{ \frac {|z|^2} n \frac{\Im ({\frak l})}{|\frak l|^2} \vertif{\Sigma \bf G^{\frak l}}^2  }^\half   \pt{ \frac  {|z|^2} n\frac{\Im ({\frak l'})}{|\frak l'|^2} \vertif{\Sigma \bf G^{\frak l'}}^2  }^\half\\ 
  &=d(\frak l, \frak l') \pt{\Im\pt{\cal F(\frak l)}-\Im(z)}^\half\pt{\Im\pt{\cal F(\frak l')}-\Im(z)}^\half.
\end{align*}
 We thus have:
 \begin{align*}
     d(\cal F(\frak l),\cal F(\frak l')) &= \frac{ \verti{\cal F(\frak l) - \cal F(\frak l')}}{{\Im(\cal F(\frak l))}^\half {\Im(\cal F(\frak l'))}^\half} \\
     &\leq d(\frak l,\frak l') 
     \pt{1 - \frac{\Im(z)}{\Im(\cal F(\frak l))}}^{\frac 1 2} 
     \pt{1 - \frac{ \Im(z)}{\Im(\cal F(\frak l'))}}^{\frac 1 2}.\end{align*}
Given that $\Im(z) \leq \Im\pt{\cal F(\frak l)} \leq \Im(z)+ \gamma_n |z| \eta $, we have  $0 \leq {1 - \frac{\Im(z)}{\Im(\cal F(\frak l))}} \leq 1 - \pt{1+ \gamma_n |z| \eta^2}^{-1} = \frac{\gamma_n |z| \eta^2}{1+\gamma_n |z| \eta^2}$, hence $ d(\cal F(\frak l),\cal F(\frak l')) \leq  \frac{\gamma_n |z| \eta^2}{1+\gamma_n |z| \eta^2} d(\frak l,\frak l') $.\end{proof}

\begin{cor} \label{UniquenessCComplexCase}
In the complex case, ${\frak c}$ is the unique fixed point of $\cal F$.
  \end{cor}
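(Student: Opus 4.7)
The plan is to mirror the short argument used in Corollary \ref{UniquenessCRealCase} for the real case, with the usual absolute value replaced by the semi-metric $d$ for which $\cal F$ is now known to be contractive. The ingredients are all in place: Proposition \ref{cfixedpoint} gives us that $\frak c \in \Omega$ and that $\cal F(\frak c) = \frak c$, and Proposition \ref{FcontractionMap} gives the Lipschitz estimate $d(\cal F(\omega_1), \cal F(\omega_2)) \leq k_{\cal F}\, d(\omega_1, \omega_2)$ with $k_{\cal F} = \frac{\gamma_n |z| \eta^2}{1 + \gamma_n |z| \eta^2} < 1$.

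Concretely, suppose $\frak l \in \Omega$ is another fixed point of $\cal F$. By definition of $\Omega$, both $\Im(\frak l) \geq \Im(z) > 0$ and $\Im(\frak c) \geq \Im(z) > 0$, so the quantity
\[ d(\frak l, \frak c) = \frac{\verti{\frak l - \frak c}}{\Im(\frak l)^\half \Im(\frak c)^\half} \]
is a well-defined non-negative real number (and it is finite because both $\Im(\frak l)^\half$ and $\Im(\frak c)^\half$ are bounded below by $\Im(z)^\half > 0$). Then applying Proposition \ref{FcontractionMap} with $\frak l' = \frak c$ yields
\[ d(\frak l, \frak c) = d(\cal F(\frak l), \cal F(\frak c)) \leq k_{\cal F}\, d(\frak l, \frak c). \]

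Since $k_{\cal F} < 1$, this inequality forces $d(\frak l, \frak c) = 0$. As the denominator in the definition of $d(\frak l, \frak c)$ is strictly positive, we must have $\verti{\frak l - \frak c} = 0$, i.e.\ $\frak l = \frak c$. There is no real obstacle here: the only thing to verify beyond a verbatim copy of the real-case argument is that $d$ separates points of $\Omega$, and this is immediate from the strict positivity of the imaginary parts on $\Omega$; the fact that $d$ is only a semi-metric (and need not satisfy the triangle inequality) plays no role in the uniqueness argument.
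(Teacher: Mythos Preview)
Your argument is correct. It is the natural parallel of Corollary \ref{UniquenessCRealCase}: apply the Lipschitz estimate of Proposition \ref{FcontractionMap} to a pair of fixed points and conclude. The only delicate point is that $d$ is merely a semi-metric, but as you note, you only use that $d$ separates points of $\Omega$, which follows from $\Im(\frak l),\Im(\frak c)\geq\Im(z)>0$.

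The paper chooses a slightly different route. Instead of invoking the contraction estimate from Proposition \ref{FcontractionMap} as a black box, it goes back to the algebraic identity
\[
\frak l - \frak c \;=\; \cal F(\frak l)-\cal F(\frak c) \;=\; \frac{z^{2}}{n}\,\frac{\frak l-\frak c}{\frak l\,\frak c}\,\Tr\bigl(\bf G^{\frak l}\Sigma\,\bf G^{\frak c}\Sigma\bigr),
\]
divides through by $\frak l-\frak c$ (assumed nonzero), and bounds the trace by Cauchy--Schwarz; the resulting quantity is then compared to the expression for $\Im(\cal F(\frak l))-\Im(z)$ from Lemma \ref{ImFl} to reach a strict inequality $1<1$. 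This is essentially the same computation that underlies Proposition \ref{FcontractionMap}, unpacked and applied with the specific information that both points are fixed. Your approach is more economical because it reuses the packaged contraction result; the paper's approach is more self-contained and avoids any reference to the semi-metric $d$, at the cost of repeating part of an earlier proof.
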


  \begin{proof}
In order to obtain a contradiction, let $\frak l \in \Omega$ be a fixed point of $\cal F$ such that $\bf l \neq \bf c$ . Using lemma \ref{ImFl}, $\Im(\frak l) = \Im(\cal F(\frak l))=\Im(z) + \frac  {|z|^2}  n \frac{\Im ({\frak l})}{|\frak l|^2} \vertif{\Sigma \bf G^{\frak l}}^2$. Since $\Im(z)>0$, we have $1 > \frac  {|z|^2}  n \frac{1}{|\frak l|^2} \vertif{\Sigma \bf G^{\frak l}}^2  $. Similarly $1 > \frac  {|z|^2}  n \frac{1}{|\frak c|^2} \vertif{\Sigma \bf G^{\frak c}}^2  $.

As seen in the proof of Proposition \ref{FcontractionMap}, we have $ \frak l - \frak c = \cal F(\frak l) - \cal F(\frak c) = \frac{z^2}n \frac{ \frak l - \frak c}{\frak l  \frak c} \Tr\pt{\bf G^{\frak l}\Sigma \bf G^{\frak c} \Sigma}$. We obtain a contradiction after taking a Cauchy-Schwarz inequality:
\[ 1 = \frac{|z|^2}{n}\frac{1}{|\frak l|| \frak c| } \Tr\pt{\bf G^{\frak l}\Sigma \bf G^{\frak c} \Sigma} \leq \frac{|z|^2}{n}\frac{1}{|\frak l| } \vertif{\bf G^{\frak l}\Sigma} \frac 1 {| \frak c|}\vertif{ \bf G^{\frak c} \Sigma} > 1. \] \end{proof}

Our second objective in this section is to use the contraction property of $\cal F$ to quantify how $\frak b$ and $\frak c$ are close. Since $d$ is not a true metric (it lacks the triangular inequality), we need a few additional lemmas:

\begin{lem}
Given $\omega_1$ and $\omega_2 \in \C^+$, $\verti{\frac{1}{{\Im(\omega_1)}^\half }} \leq \verti{\frac{1}{{\Im(\omega_2)}^\half } }\pt{1 + d(\omega_1,\omega_2)  }$. 
\end{lem}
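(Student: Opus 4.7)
The plan is to reduce the claimed inequality to the elementary bound $\Im(\omega_2)^{1/2} - \Im(\omega_1)^{1/2} \leq |\omega_1 - \omega_2|/\Im(\omega_2)^{1/2}$, and then derive that bound by a conjugate-radical trick together with the $1$-Lipschitz property of $\Im(\cdot)$.

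Concretely, I would first multiply the inequality through by $\Im(\omega_2)^{1/2}$, which turns the claim into
\[
\Im(\omega_2)^{1/2} \leq \Im(\omega_1)^{1/2} + \frac{|\omega_1-\omega_2|}{\Im(\omega_2)^{1/2}}.
\]
If $\Im(\omega_2) \leq \Im(\omega_1)$, then the inequality is immediate because $|\omega_1-\omega_2|/\Im(\omega_2)^{1/2} \geq 0$ and $\Im(\omega_1)^{1/2} \geq \Im(\omega_2)^{1/2}$. Otherwise $\Im(\omega_2) > \Im(\omega_1) > 0$ and the task reduces to showing $\Im(\omega_2)^{1/2} - \Im(\omega_1)^{1/2} \leq |\omega_1-\omega_2|/\Im(\omega_2)^{1/2}$.

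For that, I would use the standard identity
\[
\Im(\omega_2)^{1/2} - \Im(\omega_1)^{1/2} = \frac{\Im(\omega_2)-\Im(\omega_1)}{\Im(\omega_2)^{1/2}+\Im(\omega_1)^{1/2}} \leq \frac{\Im(\omega_2)-\Im(\omega_1)}{\Im(\omega_2)^{1/2}},
\]
and then invoke $|\Im(\omega_2)-\Im(\omega_1)| = |\Im(\omega_2-\omega_1)| \leq |\omega_2-\omega_1|$ to conclude. There is essentially no obstacle here; the only mild subtlety is keeping track of signs to handle the two cases, but splitting on $\Im(\omega_1) \lessgtr \Im(\omega_2)$ handles this cleanly.
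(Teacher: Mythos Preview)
Your proof is correct and uses essentially the same ingredients as the paper: the conjugate-radical identity $\Im(\omega_2)^{1/2}-\Im(\omega_1)^{1/2}=\frac{\Im(\omega_2)-\Im(\omega_1)}{\Im(\omega_2)^{1/2}+\Im(\omega_1)^{1/2}}$ together with $|\Im(\omega_2)-\Im(\omega_1)|\leq|\omega_1-\omega_2|$. The paper avoids your case split by writing $\frac{1}{\Im(\omega_1)^{1/2}}=\frac{1}{\Im(\omega_2)^{1/2}}\bigl(1+\frac{\Im(\omega_2)-\Im(\omega_1)}{\Im(\omega_1)^{1/2}(\Im(\omega_2)^{1/2}+\Im(\omega_1)^{1/2})}\bigr)$ and taking absolute values directly, but this is purely cosmetic; note also that to reach your displayed inequality you actually multiply through by $\Im(\omega_1)^{1/2}\Im(\omega_2)^{1/2}$, not just $\Im(\omega_2)^{1/2}$.
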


\begin{proof} It is simply a matter of computing:\begin{align*}
\frac{1}{{\Im(\omega_1)}^\half} &= \frac{1}{{\Im(\omega_2)}^\half}  \pt{ 1  + \frac{{\Im(\omega_2)}^\half-{\Im(\omega_1)}^\half}{{\Im(\omega_1)}^\half}} \\ &= \frac{1}{{\Im(\omega_2)}^\half}  \pt{ 1  + \frac {\Im(\omega_2) -  \Im(\omega_1)} {{\Im(\omega_1)}^\half\pt{{\Im(\omega_2)}^\half + {\Im(\omega_1)}^\half}  }},  \\
\verti{\frac{1}{{\Im(\omega_1)}^\half}} &\leq  \verti{\frac{1}{{\Im(\omega_2)}^\half} } \pt{ 1  +  \frac {\verti{\Im(\omega_2) -  \Im(\omega_1)}} {{\Im(\omega_1)}^\half\pt{{\Im(\omega_2)}^\half + {\Im(\omega_1)}^\half}  }}
\\ 
&\leq  \verti{\frac{1}{{\Im(\omega_2)}^\half} }\pt{1 + d(\omega_1,\omega_2)  }.
\end{align*}
\end{proof}

\begin{lem}\label{StabilityContract} Let $f: \C^+ \to \C^+ $ be $k_f$-Lipshitz with respect to $d$, let $c$ be a fixed point of $f$ and let $b \in \C^+$ be another point.
We let $\Delta = d(b,f(b))$.
Provided ${k_f}(1+\Delta) < 1$, then 
the following inequality holds true: \[\verti{c-b} \leq \frac {\verti{f(b) - b}} { 1 - {k_f}(1+\Delta)} . \]
\end{lem}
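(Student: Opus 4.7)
My plan is to apply the triangle inequality in the form $\verti{c-b} \leq \verti{c - f(b)} + \verti{f(b) - b}$, bound the first term by exploiting the contraction of $f$ with respect to $d$ together with the identity $c = f(c)$, and finally absorb the resulting $\verti{c-b}$ term on the right-hand side using the hypothesis $k_f(1+\Delta) < 1$.

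The first step is to rewrite
\[ \verti{c - f(b)} = \verti{f(c) - f(b)} = \Im(f(c))^{\frac 1 2} \Im(f(b))^{\frac 1 2} \cdot d(f(c), f(b)), \]
and use the $k_f$-contraction of $f$ with respect to $d$ to obtain $d(f(c), f(b)) \leq k_f \, d(c,b)$. Since $c$ is a fixed point, $\Im(f(c)) = \Im(c)$, and unfolding the definition of $d(c,b)$ yields
\[ \verti{c - f(b)} \leq k_f \cdot \frac{\Im(f(b))^{\frac 1 2}}{\Im(b)^{\frac 1 2}} \cdot \verti{c-b}. \]

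The next step is to control the ratio $\Im(f(b))^{\frac 1 2}/\Im(b)^{\frac 1 2}$ using the preceding lemma applied with $\omega_1 = b$ and $\omega_2 = f(b)$, which gives $\Im(f(b))^{\frac 1 2}/\Im(b)^{\frac 1 2} \leq 1 + d(b, f(b)) = 1 + \Delta$. Combining this with the triangle inequality produces
\[ \verti{c-b} \leq k_f (1 + \Delta) \verti{c - b} + \verti{f(b) - b}, \]
and solving for $\verti{c-b}$ under the assumption $k_f(1+\Delta) < 1$ gives the claimed bound.

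This argument is short and I do not foresee any real obstacle: the only subtlety is that $d$ is only a semi-metric weighted by $\Im(\cdot)^{\frac 1 2}$, but this is precisely what the preceding lemma handles. The strategy also clarifies why the hypothesis involves the factor $(1+\Delta)$ rather than just $k_f < 1$ as in a standard Banach fixed-point argument, namely because passing from $d$ back to the Euclidean modulus loses a factor accounting for how much the imaginary parts of $b$ and $f(b)$ differ.
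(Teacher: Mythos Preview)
Your proof is correct and follows essentially the same route as the paper: both apply the triangle inequality through $f(b)$, use the $d$-Lipschitz property together with $c=f(c)$, and invoke the preceding lemma with $\omega_1=b$, $\omega_2=f(b)$ to control the imaginary-part ratio before absorbing the $\verti{c-b}$ term. The only cosmetic difference is that the paper phrases the intermediate inequality in terms of $d(c,b)$ and then multiplies through by $\Im(c)^{1/2}\Im(b)^{1/2}$, whereas you work directly with $\verti{c-b}$ throughout.
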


\begin{proof} Using the above lemma with $\omega_1 = b$ and $\omega_2 = f(b)$, we have:
\begin{align*}
    \frac{|c-f(b)|}{ {\Im(c)}^\half  {\Im(b)}^\half } \leq \frac{|f(c)-f(b)|}{{\Im(c)}^\half  {\Im(f(b))}^\half }\pt{1 + d(b,f(b))} 
    \leq {k_f}\, d(c,b) (1 + \Delta).
\end{align*}
Therefore:
\begin{align*}
 d(c,b) &= \frac{\verti{c - b}}{{\Im (c)}^\half {\Im (b)}^\half } \\
 &\leq \frac{\verti{c - f(b)}}{{\Im (c)}^\half {\Im (b)}^\half }
 +
 \frac{\verti{f(b) - b}}{{\Im (c)}^\half {\Im (b)}^\half } \\
 &\leq  {k_f} d(c,b) (1 + \Delta) + \frac{\verti{f(b) - b}}{{\Im (c)}^\half {\Im (b)}^\half } .\end{align*}
Provided ${k_f}(1+\Delta) < 1$, we obtain:
 \begin{align*}
d(c,b) &\leq \frac 1 { 1 - {k_f}(1+\Delta)}  \frac{\verti{f(b) - b}}{{\Im (c)}^\half {\Im (b)}^\half }  \\
\verti{c-b} &\leq \frac {\verti{f(b) - b}} { 1 - {k_f}(1+\Delta)}
.\end{align*}
 \end{proof}

We will now prove the equivalent of Proposition  \ref{GapBCRealCase} in the complex case. 

 \begin{prop}\label{GapBCComplexCase}
 If ${\eta^8 |z|^{\frac 7 2}} =o(n)$, then $\verti{\frak b - \frak c}\leq O\pt{n^{-1}\eta^7 |z|^{\frac 7 2}}$.
 \end{prop}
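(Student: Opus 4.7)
The plan is to apply Lemma \ref{StabilityContract} with $f = \cal F$, $c = \frak c$ (the fixed point provided by Proposition \ref{cfixedpoint} and shown unique in Corollary \ref{UniquenessCComplexCase}), and $b = \frak b$. The needed inputs are already in place: Proposition \ref{b-Fb} gives $\verti{\frak b - \cal F(\frak b)} \leq O\pt{n^{-1}\eta^5 |z|^{5/2}}$, and Proposition \ref{FcontractionMap} gives $\cal F$ a contraction factor $k_{\cal F} = \frac{\gamma_n |z|\eta^2}{1 + \gamma_n |z|\eta^2}$ with respect to the semi-metric $d$. So the task reduces to (a) estimating $\Delta := d(\frak b, \cal F(\frak b))$, (b) verifying the quantitative hypothesis $k_{\cal F}(1+\Delta) < 1$ of the lemma, and (c) converting the resulting bound on $\verti{\frak c - \frak b}$ into the stated order of magnitude.

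For step (a), both $\frak b$ and $\cal F(\frak b)$ lie in $\Omega$ (by Proposition \ref{PropertiesAandB} and Lemma \ref{ImFl}), so their imaginary parts are bounded below by $\Im(z) = \eta^{-1}$. Therefore
\[
\Delta = \frac{\verti{\frak b - \cal F(\frak b)}}{\Im(\frak b)^{\half}\,\Im(\cal F(\frak b))^{\half}} \leq \eta\,\verti{\frak b - \cal F(\frak b)} \leq O\pt{n^{-1}\eta^6 |z|^{5/2}}.
\]

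For step (b), using $1 - k_{\cal F} = (1 + \gamma_n |z| \eta^2)^{-1}$, the condition $k_{\cal F}(1+\Delta) < 1$ rewrites as $\Delta \cdot \gamma_n |z| \eta^2 < 1$. Plugging in the estimate from (a), this amounts to $n^{-1}\eta^8|z|^{7/2} = o(1)$, which is precisely the standing assumption ${\eta^8 |z|^{7/2}} = o(n)$. For $n$ large enough we actually get $k_{\cal F}\Delta \leq \tfrac{1}{2}(1 - k_{\cal F})$, so that
\[
\frac{1}{1 - k_{\cal F}(1+\Delta)} \leq \frac{2}{1 - k_{\cal F}} = 2(1 + \gamma_n|z|\eta^2) \leq O(|z|\eta^2),
\]
where the last bound uses $|z|\eta^2 \geq \eta \geq \Omega(1)$ since $\Im(z)$ is bounded.

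For step (c), Lemma \ref{StabilityContract} then delivers
\[
\verti{\frak b - \frak c} \leq \frac{\verti{\frak b - \cal F(\frak b)}}{1 - k_{\cal F}(1+\Delta)} \leq O(|z|\eta^2) \cdot O\pt{n^{-1}\eta^5 |z|^{5/2}} = O\pt{n^{-1}\eta^7|z|^{7/2}},
\]
which is the claim. The only real subtlety is checking that the hypothesis $\eta^8 |z|^{7/2} = o(n)$ is exactly what is needed to keep $\cal F$ a bona-fide contraction on the relevant pair $(\frak b, \cal F(\frak b))$; the rest is just careful bookkeeping of the $\eta$ and $|z|$ exponents.
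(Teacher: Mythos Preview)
Your proof is correct and follows essentially the same approach as the paper: apply Lemma \ref{StabilityContract} with the contraction constant $k_{\cal F}$ from Proposition \ref{FcontractionMap}, bound $\Delta$ via the lower bound $\Im(\frak b),\Im(\cal F(\frak b))\geq \eta^{-1}$ and Proposition \ref{b-Fb}, check that $\gamma_n|z|\eta^2\Delta = o(1)$ under the hypothesis $\eta^8|z|^{7/2}=o(n)$ so that $(1-k_{\cal F}(1+\Delta))^{-1}\leq 2(1+\gamma_n|z|\eta^2)\leq O(\eta^2|z|)$, and multiply out. The bookkeeping of exponents matches the paper's exactly.
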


\begin{proof}
We want to apply Lemma \ref{StabilityContract} to the function $\cal F$, the true fixed point ${\frak c}$ and the approximate fixed point $\frak b $. We have:
\begin{align*}
\Delta= d\pt{\frak b, \cal F(\frak b)} &=  \frac{\verti{\frak b - \cal F( {\frak b}) }} {\Im(\frak b)^\half \Im\pt{\cal F(\frak b}^\half} \leq O\pt{n^{-1} \eta^6 |z|^{\frac 5 2} },
\end{align*}
since $\Im( \frak b )$ and  $\Im( \cal F( {\frak b} ))  \geq \Im(z) = \eta^{-1}$. On the other hand $\cal F$ is $k_{\cal F}$-Lipschitz with $k_{\cal F} = \frac{{\gamma_n \eta^2 |z|}}{1 + {\gamma_n \eta^2 |z|}}$. If ${\eta^8 |z|^{\frac 7 2}} =o(n)$, then $ {\gamma_n \eta^2 |z|} \Delta \to 0$ and  for $n$ large enough: \[ k_{\cal F}(1+\Delta) = \frac{{\gamma_n \eta^2 |z|} + {\gamma_n \eta^2 |z|} \Delta}{1+{\gamma_n \eta^2 |z|}}\leq \frac{{\gamma_n \eta^2 |z|} + 0.5}{1 + {\gamma_n \eta^2 |z|}} =  1- \frac 1{2(1+{\gamma_n \eta^2 |z|})} < 1. \]
From Lemma \ref{StabilityContract} we have: $\frac 1 {1 - k_{\cal F}(1+\Delta)} \leq 2\pt{1 + \gamma_n \eta^2 |z|} \leq O(\eta^2 |z|)$. We deduce that:
\begin{align*}
   \verti{{\frak c} - {\frak b}} &\leq \frac {\verti{\cal F( {\frak b}) -  {\frak b}}}{1 - k_{\cal F}(1+\Delta)} \leq O\pt{n^{-1}\eta^7 |z|^{\frac 7 2}}.
\end{align*}
\end{proof}

\subsection{Proof of the second deterministic equivalent }

Let us recall the definitions of $\nu = \mmp\pt{\gamma_n } \boxtimes \mu_{\Sigma}$, $\check \nu = (1-\gamma_n ) \delta_0 + \gamma_n  \nu$, ${\frak c} = -  { g_{\check \nu}}^{-1}$, and $\bf G^{\frak c}=  \pt{\frac z {\frak c} \Sigma - z I_p}^{-1} $. 

\begin{thm}[Second equivalent]\label{SecDetEq} Let $z(n)$ be a sequence of spectral arguments in $\R^{*-}$. If $O(n^{-1}) \leq |z|^7 \leq O(1)$, then:  \[ \vertif{\E[\cal G] - \bf G^{{\frak c}} } \leq O\pt{\frac 1{n^\half |z|^{\frac{11} 2}} } \] 

If $z(n)$ is a sequence of spectral arguments in $\C^+$ such that $\Im(z)$ is bounded and ${\Im(z)^{-16} |z|^{ 7 }} =o(n)$, then:   \[ \vertif{\E[\cal G] - \bf G^{{\frak c}} } \leq O\pt{\frac{ |z|^{\frac 5 2}}{n^{\half }\Im(z)^9}}  \] 
\end{thm}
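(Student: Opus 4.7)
The plan is to write $\E[\cal G] - \bf G^{\frak c} = \pt{\E[\cal G] - \bf G^{\frak b}} + \pt{\bf G^{\frak b} - \bf G^{\frak c}}$ and bound each piece by the triangle inequality. The first piece is already controlled by Theorem \ref{FirstDetEq}, which under the hypotheses of the present theorem yields $\vertif{\E[\cal G]-\bf G^{\frak b}} \leq O(n^{-\half}\eta^5|z|^{3/2})$. The second piece compares two deterministic resolvents indexed by scalars $\frak b, \frak c \in \Omega$ whose gap has already been estimated, namely $|\frak b - \frak c|\leq O(n^{-1}|z|^{-7/2})$ in the real case by Proposition \ref{GapBCRealCase} and $|\frak b - \frak c|\leq O(n^{-1}\eta^7|z|^{7/2})$ in the complex case by Proposition \ref{GapBCComplexCase}.

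To convert the scalar gap into a Frobenius-norm estimate on the matrices, I would use the resolvent identity
\begin{equation*}
\bf G^{\frak b} - \bf G^{\frak c} \;=\; \bf G^{\frak b}\pt{\frac{z}{\frak c} - \frac{z}{\frak b}}\Sigma\, \bf G^{\frak c} \;=\; \frac{\frak b - \frak c}{z}\pt{\frac{z}{\frak b}\bf G^{\frak b}}\Sigma\pt{\frac{z}{\frak c}\bf G^{\frak c}},
\end{equation*}
take the Frobenius norm with $\vertif{AB} \leq \vertiii{A}\vertif{B}$, and combine with $\vertif{\Sigma} \leq p^\half \vertiii{\Sigma} = O(n^\half)$. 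In the complex case, Lemma \ref{AutomaticBoundDetEq} gives $\vertiii{\frac{z}{\frak b}\bf G^{\frak b}}$ and $\vertiii{\frac{z}{\frak c}\bf G^{\frak c}}\leq \eta$, so
\begin{equation*}
\vertif{\bf G^{\frak b} - \bf G^{\frak c}} \;\leq\; \frac{|\frak b - \frak c|}{|z|}\cdot n^\half\eta^2 \;\leq\; O\!\pt{n^{-\half}\eta^9|z|^{5/2}},
\end{equation*}
which dominates the first piece and gives the announced bound.

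In the real case, the same naive estimate only yields $n^{-\half}|z|^{-13/2}$, which is worse than the stated $n^{-\half}|z|^{-11/2}$. The main obstacle is to save one factor of $|z|$, and this is where Lemma \ref{SpecialBoundReal} comes in: rather than separating $\Sigma$ from both resolvent factors, I would group it with the second one and use the sharper bound $\vertiii{\frac{z}{\frak c}\bf G^{\frak c}\Sigma}\leq (1+|z|)^{-1}\leq 1$, while keeping the weaker $\vertiii{\frac{z}{\frak b}\bf G^{\frak b}}\leq \eta = |z|^{-1}$ for the other factor. This yields
\begin{equation*}
\vertif{\bf G^{\frak b} - \bf G^{\frak c}} \;\leq\; \frac{|\frak b - \frak c|}{|z|}\cdot n^\half\cdot \vertiii{\tfrac{z}{\frak b}\bf G^{\frak b}} \cdot \vertiii{\tfrac{z}{\frak c}\bf G^{\frak c}\Sigma}\;\leq\; O\!\pt{n^{-\half}|z|^{-11/2}},
\end{equation*}
which dominates the first-equivalent contribution $O(n^{-\half}|z|^{-7/2})$ under $|z|\leq O(1)$. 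A final triangle inequality wraps up both cases, and the conditions $|z|^{-7}\leq O(n)$ (resp.\ $\Im(z)^{-16}|z|^7 = o(n)$) are exactly what is needed to invoke Theorem \ref{FirstDetEq} and Proposition \ref{GapBCRealCase} (resp.\ Proposition \ref{GapBCComplexCase}) with uniform constants.
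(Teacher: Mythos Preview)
Your proposal is correct and follows essentially the same route as the paper: decompose $\E[\cal G]-\bf G^{\frak c}$ through $\bf G^{\frak b}$, control the first piece by Theorem \ref{FirstDetEq}, and control $\bf G^{\frak b}-\bf G^{\frak c}$ via the resolvent identity together with the scalar gap estimates of Propositions \ref{GapBCRealCase} and \ref{GapBCComplexCase}, using Lemma \ref{SpecialBoundReal} in the real case to save one power of $|z|$. The only cosmetic difference is that the paper applies Lemma \ref{SpecialBoundReal} to $\frac{z}{\frak b}\bf G^{\frak b}\Sigma$ rather than to $\frac{z}{\frak c}\bf G^{\frak c}\Sigma$, which is immaterial since both $\frak b$ and $\frak c$ lie in $\Omega$; note also that in your real-case display the factor $n^{1/2}$ arises from passing from the Frobenius to the spectral norm via $\vertif{\cdot}\leq p^{1/2}\vertiii{\cdot}$ rather than from $\vertif{\Sigma}$ as in the complex case.
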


\begin{rem} In the real case, the assumptions on $z$ are not strictly necessary to obtain an explicit bound. Indeed we can always find a $O\pt{n^{-\half}|z|^{-2}\kappa}$ estimate with the same parameters as in the remark following Theorem \ref{FirstDetEq}: $\sigma^2 = |z|^{-2}\pt{1 + n^{-1} |z|^{-1}}$ and $\kappa = \sigma^2 |z|^{-1} + \sigma |z|^{- \frac 5 2}$.

The situation in the complex case  is however different this time, because we really need ${\Im(z)^{-16} |z|^{ 7 }} =o(n)$ to prove the stability of the fixed point problem. This is a stronger hypothesis than the hypothesis $\Im(z)^{-10}|z|^3 \leq O(n)$ we used for mere convenience in the first deterministic equivalent.

Also note that the exponent in the real case is slightly better than the one we would obtain by replacing $\Im(z)$ in the complex setting  by $|z|$ in the real setting  ($11/2$ instead of $13/2$). Surprisingly enough, this lack of symmetry does not come from the different methods we use to analyze the stability of the fixed point equation, but rather from a difference in the Lipschitz parameter of the application $\frak l \in \Omega \mapsto \bf G^{\frak l}$ as we will see in the upcoming proof.
\end{rem}

\begin{proof} We have quantified how close $\frak b$  and $\frak c$ are in Propositions \ref{GapBCRealCase} and \ref{GapBCComplexCase}, and we can derive a Lipschitz property for $\frak l \mapsto \bf G^{\frak l}$ using a resolvent identity:
\begin{align*}
     \vertif{\bf G^{{\frak l}} - \bf G^{{\frak l'}}} = \vertif{z\verti{\frak l - \frak l'}
     \frac{\bf G^{{\frak l'}}}{{\frak l'}}
     \Sigma
     \frac{\bf G^{{\frak l}}}{{\frak l}} } \leq p^\half \frac{\verti{\frak l-\frak l'}}{|z|} \vertiii{\frac z {\frak l'} \bf G^{ {\frak l'}}\Sigma}\cdot \vertiii{\frac z {\frak l} \bf G^{ {\frak l}}}.
\end{align*}
This is where we can get better exponents in the real case using Lemma \ref{SpecialBoundReal}. $\vertiii{\frac z {\frak b} \bf G^{ {\frak b}}\Sigma } \leq \frac 1 {1+|z|} \leq 1$, thus:
\begin{align*}
     \vertif{\bf G^{{\frak c}} - \bf G^{{\frak b}}}      &\leq p^\half |z|^{-1}O\pt{n^{-1}|z|^{- \frac  7 2 }}|z|^{-1} \leq O\pt{n^{-\half} |z|^{- \frac {11} 2 }}.
\end{align*}
In the complex case, we only have $\vertiii{\frac z {\frak b} \bf G^{ {\frak b}}\Sigma } \leq \eta$ by Proposition \ref{PropertiesAandB}, thus:
\begin{align*}
     \vertif{\bf G^{{\frak c}} - \bf G^{{\frak b}}} 
     &\leq p^\half |z|^{-1}O\pt{n^{-1}\eta^7 |z|^{\frac 7 2}}\eta^2  \leq O\pt{n^{-\half} \eta^9 z^{\frac 5 2}}
\end{align*}
In both cases, the first deterministic equivalent in Theorem \ref{FirstDetEq} gives a term of lesser magnitude for $\vertif{   \E[\cal G] - \bf G^{ {\frak b}}}$, which completes the proof since:
 \begin{align*}
 \vertif{ \E[\cal G] - \bf G^{{\frak c}}}\leq  \vertif{   \E[\cal G] - \bf G^{ {\frak b}}} +  \vertif{  \bf G^{\frak c} - \bf G^{ {\frak b}}}\end{align*}. \end{proof}

\section{Proof of the main results}

Most of the technical work is already achieved in the sections 4-6. For instance, the Proposition \ref{ConcentrationGK} stating the concentration of $\cal G_K(z)$ is a direct consequence of Corollary \ref{ConcentrationG}. Indeed with $\eta = |z|^{-1}$ if $z \in \R^{*-}$, we have  $  n^{-\half} \eta^2 |z|^\half{  } = n^{- \half} |z|^{- \frac 3 2} $. With $\eta = \Im(z)^{-1} $ if $z \in \C^+$  we have $  n^{-\half} \eta^2 |z|^\half{  } = n^{-\half} \Im(z)^{-2} |z|^\half$.

The conclusions of our main theorem \ref{MainThm} are exactly the same as in Theorem \ref{SecDetEq} after carefully comparing the definitions of the deterministic equivalent.
We need however to add a quick argument for the Proposition \ref{DetEq} and the Corollary \ref{CorDetEq}.

\begin{proof}[Proof of Proposition \ref{DetEq}]
From \ref{MainThm}, uniformly in any $A \in \R^{p \times p}$ we have:\begin{align*}
 \verti{\Tr\pt{\E[\cal G_K(z)] A - \bf G(z) A}} &\leq \vertif{\E[\cal G_K(z)]  - \bf G(z)} \cdot \vertif{A} \\
 &\leq \vertif{A} \, O\pt{\frac {\kappa} {n^\half} } ,\end{align*}where $\kappa = \frac 1 {|z|^{\frac {11} 2}}$ in the real case, or $\kappa = \frac{ |z|^{\frac 5 2}}{\Im(z)^9}$ in the complex case. On the other hand, from Propositions \ref{ConcentrationGK}, \ref{LipschitzTransformsConcentration} and \ref{VarianceConcentration}, uniformly in any $A \in \R^{p \times p}$ we have:\[  \verti{\Tr\pt{\cal G_K(z) A - \E[\cal G_K(z)] A}} \leq \vertif{A} \, O\pt{\frac {\tau (\log n)^\half} {n^\half} } \rm{\quad a.s. }  \]
where $\tau = \frac 1 { |z|^{\frac 3 2}} $ in the real case, or $\tau = \frac {|z|^\half} {\Im(z)^2 }$ in the complex case.

In both scenarios, $\tau \leq O(\kappa)$. Indeed in the real case, $\tau \kappa^{-1} = |z|^4 \leq O(1)$. In the complex case, $\tau \kappa^{-1} = \Im(z)^7 |z|^{-2} \leq \Im(z)^5 \leq O(1)$. We deduce that uniformly in any $A \in \R^{p \times p}$:
\begin{align*}
  \verti{\Tr\pt{\cal G_K(z) A - \bf G(z) A}}   &\leq \verti{\Tr\pt{\cal G_K(z) A - \E[\cal G_K(z)] A}} + \verti{\Tr\pt{\E[\cal G_K(z)] A - \bf G(z) A}} \\
    &\leq \vertif{A} \, O\pt{\frac {\kappa(\log n)^\half} {n^\half} } \rm{\quad a.s. } 
\end{align*}

\end{proof}

\begin{proof}[Proof or Corollary \ref{CorDetEq}] Considering $A = p^{-1} I_p$ which satisfies $\vertif{A} = p^{-\half} = O(n^{-\half}) $, and using the identity $p^{-1}  \Tr \bf G(z) = g_\nu(z)$ from Proposition \ref{cfixedpoint} yield:
\begin{align*} \verti{\frac 1 p \Tr \cal G_K(z) - \frac 1 p \Tr \bf G(z) } &= \verti{g_K(z) - g_\nu(z)} \leq O\pt{\frac {\kappa (\log n)^\half} {n} }  \rm{\quad a.s. } 
\end{align*} 
Uniformly in $\bf u \in \Ss^{p-1}$ , with $A = \bf u \bf u^\top $, $\vertif{A} = \vertii{\bf u} = 1 $:
\begin{align*} \verti{\Tr\pt{ \cal G_K(z) \bf u \bf u ^\top-  \bf G(z) \bf u \bf u ^\top }} &=  \verti{g_{\mu_{K,\bf u}}(z) - \bf u^\top \bf G(z) \bf u} &\leq O\pt{\frac {\kappa (\log n)^\half} {n^\half} }    \rm{\quad a.s. }
\end{align*} 

Finally the third assertion is a consequence of the polarization identity for symmetric matrices, which relates the entries of $M$ to terms like $\bf u^\top M \bf u$:
\[ 2 M_{ij} = (\bf e_i + \bf e _j)^\top M (\bf e_i + \bf e _j)- \bf e_i ^\top M \bf e_i - \bf e_j ^\top M \bf e_j. \]
\end{proof}

\newpage

\section{Bounds in Kolmogorov distance}

The goal of this section is to prove Theorem \ref{BoundKolmogorov}, which provides an asymptotic bound in Kolmogorov distance given a quantitative bound on the Stieltjes transforms of probability distributions. It closely follows the ideas of \cite[Theorem 5.2]{banna2020holder}, with some key differences. First of all we do not ask for estimates of the Stieltjes transforms on the whole complex upper plane, but rather on specific sequences that will appear in the proof. This broader setting will be well adapted to the conclusions of Theorem \ref{MainThm}. Compared to the work of \cite{banna2020holder}, we also do not require weak convergence of the distributions, and we work without uniform bounds on the support of our distributions. More precisely, we open the possibility that their second moments grow slowly to $\infty$.

Let us remind the notations $\cal F_\nu$ for the cumulative distribution function (CDF) of $\nu$, and $\Delta(\nu,\mu) = \sup_{t \in \R} \verti{\cal F_\nu(t) - \cal F_\mu(t)}$ for the Kolmogorov distance between $\nu$ and $\mu$. 
The following well-known facts are well-surveyed in \cite{geronimo2003necessary}:  the convergence in Kolmogorov distance implies the weak convergence for probability measures, and there is even an equivalence if the limiting measure admits a Hölder continuous CDF.

\subsection{Bound for fixed measures}

To compare distributions in Kolmogorov distance we will first need a few technical lemmas for fixed probability measures $\mu$ and $\nu$ on $\R^+$. We start with the so-called Bai's Inequality \cite[Theorem 2.1]{bai2008large}:

\begin{lem}\label{BaiIn}
Let $\frak a = \tan\pt{\frac {3 \pi} 8 }$. If $\int_{\R} \verti{  \cal F_\nu(t) - \cal F_\mu(t)}dt< \infty$, then for any $y > 0$:
\begin{align*} \Delta(\mu,\nu) &\leq \frac 2 \pi \pt{ \int_{\R} \verti{g_\nu-g_\mu}(t + i y) dt + \frac 1 y \sup_{x \in \R} \int_{[\pm 2 y \frak a]} \verti{\cal F_{\mu}(x+t)-\cal F_{\mu}(x)} dt } \end{align*}
\end{lem}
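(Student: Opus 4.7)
The argument is the classical Poisson-smoothing method of Bai for translating Stieltjes-transform bounds into Kolmogorov distances. Set $H = \cal F_\nu - \cal F_\mu$, $\Delta = \Delta(\mu,\nu) = \sup_t |H(t)|$, and let $P_y(t) = \pi^{-1}y/(t^2+y^2)$ be the Poisson kernel. The plan is to compare $H$ to its Poisson smoothing $H_y = P_y * H$: the smoothed quantity is controlled by the $L^1$-norm of $g_\nu - g_\mu$ on the line $\R + iy$, and the smoothing error is controlled by the $L^1$-modulus of continuity of $\cal F_\mu$.

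First I would establish the inversion identity
\[ H_y(b) \;=\; \frac{1}{\pi}\int_{-\infty}^{b}\Im(g_\nu-g_\mu)(u+iy)\,du, \]
starting from $\Im g_\mu(u+iy) = \pi (P_y * \mu)(u)$: an application of Fubini together with the antiderivative $\int_{-\infty}^b P_y(u-s)\,du = \frac{1}{\pi}\arctan((b-s)/y)+\frac{1}{2}$ and an integration by parts against $\cal F_\mu$ (legal thanks to the assumption $\int|H|<\infty$) yield the convolution form. Bounding $|\Im|\leq|\cdot|$ then gives $\sup_b |H_y(b)| \leq \pi^{-1}\int_\R |g_\nu - g_\mu|(u+iy)\,du$, which will produce the first term of the statement.

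Next, swap $\mu$ and $\nu$ if necessary to assume $\sup H = \Delta$, and pick a near-maximizer $b_0$ with $H(b_0)\geq\Delta-\varepsilon$. Write the smoothing error as $H(b_0) - H_y(b_0) = \int P_y(b_0-t)\bigl(H(b_0)-H(t)\bigr)\,dt$ and split the integral on the central region $|t-b_0|\leq 2y\frak a$ and its tail. The maximality of $H$ at $b_0$ combined with the monotonicity of $\cal F_\nu$ forces the one-sided inequality $0\leq H(b_0)-H(t)\leq |\cal F_\mu(b_0)-\cal F_\mu(t)|+O(\varepsilon)$ on the side where $\cal F_\nu$'s increments are dominated by $\cal F_\mu$'s; bounding $P_y\leq 1/(\pi y)$ on the central region then yields the term $(\pi y)^{-1}\sup_x\int_{[\pm 2y\frak a]}|\cal F_\mu(x+t)-\cal F_\mu(x)|\,dt$. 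On the tail, the crude bound $H(b_0)-H(t)\leq 2\Delta$ combined with the exact Poisson tail mass $1-\frac{2}{\pi}\arctan(2\frak a)$ contributes a $\Delta$-proportional term that will be absorbed into the left-hand side. Letting $\varepsilon\to 0$ and rearranging gives an inequality of the form $(1-\kappa)\Delta \leq \pi^{-1}\int|g_\nu-g_\mu|(u+iy)\,du + (\pi y)^{-1}\sup_x\int_{[\pm 2y\frak a]}|\cal F_\mu(x+t)-\cal F_\mu(x)|\,dt$; the specific value $\frak a = \tan(3\pi/8)$ is the one that optimizes $\kappa$ so that $(1-\kappa)^{-1}$ produces the overall prefactor $2/\pi$ appearing in the statement.

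The main obstacle is the asymmetric nature of the one-sided estimate in the preceding paragraph: the bound $|H(b_0)-H(t)|\leq |\cal F_\mu(b_0)-\cal F_\mu(t)|+O(\varepsilon)$ that follows from maximality and the monotonicity of $\cal F_\nu$ holds cleanly only on one side of $b_0$. On the other side, $\cal F_\nu$'s increments can in principle exceed $\cal F_\mu$'s, and one must either symmetrize the argument (working simultaneously with a near-maximizer of $H$ and of $-H$, or exploiting the freedom to exchange $\mu$ and $\nu$) or redistribute the split between central and tail regions so that the uncontrolled side contributes only to the absorbable $\Delta$-proportional tail term. Carrying out this symmetrization while preserving the precise constant $\frak a = \tan(3\pi/8)$ and the final $2/\pi$ prefactor is the delicate book-keeping step of the proof; everything else reduces to elementary computations with the Poisson kernel.
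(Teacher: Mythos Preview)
The paper does not prove this lemma; it quotes it as Bai's inequality with a reference to \cite[Theorem 2.1]{bai2008large}. Your outline is indeed Bai's argument, and the first half (Poisson inversion giving $\sup_b|H_y(b)|\le\pi^{-1}\int_{\R}|g_\nu-g_\mu|(u+iy)\,du$) is correct.

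The obstacle you flag in your last paragraph is real, but the fixes you propose (symmetrizing, swapping $\mu$ and $\nu$, or vaguely ``redistributing the split'') do not close the argument: swapping $\mu$ and $\nu$ is not free since the modulus-of-continuity term is stated for $\cal F_\mu$ only, and evaluating the smoothing at $b_0$ forces you to control $H(b_0)-H(t)$ on both sides of $b_0$, where your one-sided bound fails. The missing device is a \emph{shift}: one bounds $H_y(b_0+y\frak a)$ from below rather than $H_y(b_0)$. Since $\cal F_\nu$ is nondecreasing, $H(t)\ge H(b_0)-\bigl(\cal F_\mu(t)-\cal F_\mu(b_0)\bigr)$ for all $t\ge b_0$; centering the Poisson kernel at $b_0+y\frak a$ makes the central window $[b_0,b_0+2y\frak a]$ lie entirely on this good side. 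Its Poisson mass is $\tfrac{2}{\pi}\arctan(\frak a)=\tfrac{2}{\pi}\cdot\tfrac{3\pi}{8}=\tfrac34$, the complement carries mass $\tfrac14$, and using $H\ge-\Delta$ there yields
\[
H_y(b_0+y\frak a)\;\ge\;\tfrac12\,\Delta-\varepsilon-\frac{1}{\pi y}\int_0^{2y\frak a}\bigl|\cal F_\mu(b_0+s)-\cal F_\mu(b_0)\bigr|\,ds.
\]
Combined with the bound on $\sup|H_y|$ and letting $\varepsilon\to0$, this gives the inequality with the exact prefactor $2/\pi$ (the one-sided integral being dominated by the two-sided $\sup_x$ in the statement). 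The case where the supremum of $|H|$ is attained as an infimum of $H$ is handled by the mirror shift $b_0-y\frak a$, using monotonicity of $\cal F_\nu$ on $t\le b_0$; no swap of $\mu$ and $\nu$ is needed.
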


The following tail bound for the Stieltjes transforms is a consequence of the intermediate result \cite[Lemma 5.5]{banna2020holder}:

\begin{lem}[]\label{TailStielt} If
$\mu$ and $\nu$ have finite second moments, smaller than $\sigma \geq 1$, then for any $y \in (0,1)$ and $A>0$ we have: \[ \int_{(\pm A)^c} \verti{g_\nu-g_\mu}(t + i y) dt \leq \frac{4 \sigma^{3}} {y^2 A } \]
\end{lem}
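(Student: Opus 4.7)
My plan is to exploit the cancellation between $g_\nu$ and $g_\mu$ at infinity, which arises because both measures are probabilities with the same total mass, and to extract decay from the second-moment hypothesis. First I would record the algebraic identity
\[
\frac{1}{s-z} = -\frac{1}{z} - \frac{s}{z^2} + \frac{s^2}{z^2(s-z)},
\]
obtained by iterating the resolvent identity. Integrating against $\nu - \mu$, the $-1/z$ term contributes nothing because both measures have total mass $1$, yielding
\[
g_\nu(z) - g_\mu(z) = -\frac{m_\nu - m_\mu}{z^2} + \frac{1}{z^2}\int \frac{s^2}{s-z}(\nu-\mu)(ds),
\]
where $m_\mu, m_\nu$ denote the first moments of $\mu$ and $\nu$.

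Next I would estimate the two pieces using the second-moment assumption. By Cauchy-Schwarz, $|m_\mu|, |m_\nu| \leq \sqrt{\sigma}$, so $|m_\nu - m_\mu| \leq 2\sqrt{\sigma}$. For the integral term, the deterministic bound $|s-z|^{-1} \leq y^{-1}$ combined with $\int s^2\,\mu(ds), \int s^2\,\nu(ds) \leq \sigma$ gives
\[
\left|\int \frac{s^2}{s-z}(\nu-\mu)(ds)\right| \leq \frac{1}{y}\int s^2\,(\nu + \mu)(ds) \leq \frac{2\sigma}{y}.
\]
The two pieces combine (using $\sigma \geq 1$ and $y \in (0,1)$, so that $\sqrt{\sigma} \leq \sigma/y$) into the pointwise estimate
\[
|g_\nu(z) - g_\mu(z)| \leq \frac{2\sqrt{\sigma} + 2\sigma/y}{|z|^2} \leq \frac{4\sigma}{y\,|z|^2}.
\]

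Finally I would integrate this bound over $|t| > A$, using $|z|^2 = t^2 + y^2 \geq t^2$, to obtain
\[
\int_{|t|>A} |g_\nu - g_\mu|(t + iy)\,dt \leq \frac{4\sigma}{y}\int_{|t|>A}\frac{dt}{t^2} = \frac{8\sigma}{yA},
\]
which is majorized by $4\sigma^3/(y^2 A)$ under the assumptions $\sigma \geq 1$ and $y \in (0,1)$, recovering the claimed bound. I do not anticipate a serious obstacle; the scheme is standard, and the fact that the sharper pointwise bound already gives only $\sigma^{3/2}/y$ rather than $\sigma^3/y^2$ suggests that the constants in the stated lemma are deliberately loose, consistent with its description as a specialization of the generic estimate in \cite[Lemma 5.5]{banna2020holder}. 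The only mild care needed is to verify that the expansion produces a term with $1/|z|^2$ decay (two factors of $1/z$) so that integration in $t$ converges uniformly in $y$.
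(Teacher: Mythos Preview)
Your argument is correct and in fact sharper than what the lemma asserts: you establish the bound $8\sigma/(yA)$, which carries only one power each of $\sigma$ and $y^{-1}$. The paper does not give its own proof here but simply cites \cite[Lemma~5.5]{banna2020holder}, so your self-contained resolvent expansion is a genuine alternative rather than a reproduction of the paper's reasoning.

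The only slip is in your final sentence. The majorization $8\sigma/(yA) \leq 4\sigma^3/(y^2A)$ is equivalent to $2y \leq \sigma^2$, which is \emph{not} guaranteed by $\sigma \geq 1$ and $y \in (0,1)$: take $\sigma = 1$ and $y = 3/4$. This is a harmless constant issue, not a gap in the method---your bound and the lemma's are simply incomparable, and yours is the tighter one exactly when the stated inequality fails. If you want to recover the lemma verbatim, observe that $8\sigma/(yA) \leq 8\sigma^3/(y^2A)$ does hold under the hypotheses (since $y < 1 \leq \sigma^2$), so you obtain the stated bound up to replacing the constant $4$ by $8$. For the downstream use in Proposition~\ref{FixedBound} and Theorem~\ref{BoundKolmogorov}, either form suffices, and plugging in your sharper estimate would only improve the final exponent.
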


Finally the last lemma is a simple consequence of the Hölder property:
\begin{lem}\label{TailCDF} If $ \cal F_{\mu}$ is Hölder continuous with constant $C$ and exponent $\beta \in (0,1]$, then for any $y>0$:
\[  \frac 1 y \sup_{x \in \R} \int_{[\pm 2 y \frak a]} \verti{\cal F_{\mu}(x+t)-\cal F_{\mu}(x)} dt  \leq 50 C y^\beta \]
\end{lem}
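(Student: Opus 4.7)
The statement is essentially a direct consequence of the Hölder hypothesis, and I do not anticipate any real obstacle. My plan is first to apply the inequality $|\cal F_\mu(x+t) - \cal F_\mu(x)| \leq C |t|^\beta$ pointwise inside the integral; since this bound is valid for every $x \in \R$, it eliminates the supremum over $x$ and reduces the problem to estimating the elementary integral
\[ \int_{-2y\frak a}^{2y\frak a} C|t|^\beta \, dt = \frac{2 C (2y\frak a)^{\beta+1}}{\beta+1}. \]
Dividing by $y$ factors out $y^\beta$ and leaves the prefactor $\frac{2 C (2\frak a)^{\beta+1}}{\beta+1}$, so that the sought inequality amounts to the universal bound $\frac{2 (2\frak a)^{\beta+1}}{\beta+1} \leq 50$ on $\beta \in (0,1]$.

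For the final step I would simply substitute $\frak a = \tan(3\pi/8) = 1+\sqrt{2}$, which gives $(2\frak a)^2 = 12 + 8\sqrt{2} < 24$. A short differentiation of $\beta \mapsto (2\frak a)^{\beta+1}/(\beta+1)$ shows that this function is increasing on $(0,1]$ (its logarithmic derivative is $\ln(2\frak a) - 1/(\beta+1)$, and $\ln(2\frak a) > 1$), so its supremum is attained at $\beta = 1$ with value $(2\frak a)^2/2 < 12$. Multiplying by the leading factor $2$ yields a prefactor below $24$, comfortably under the announced bound $50$. There is no genuine obstacle in this argument: the stated constant $50$ is loose, and was presumably chosen by the authors for convenience rather than sharpness.
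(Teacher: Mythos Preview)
Your proof is correct and follows exactly the approach the paper indicates: the paper does not spell out a proof but only states that the lemma is ``a simple consequence of the H\"older property,'' which is precisely what you carry out. Your verification that the numerical prefactor $\frac{2(2\frak a)^{\beta+1}}{\beta+1}$ stays below $50$ on $(0,1]$ is clean and correct.
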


Cutting the first integral of Lemma \ref{BaiIn} in $\pm A$ and using the lemmas gives the following estimate for the Kolmogorov distance between two fixed measures:

\begin{prop} \label{FixedBound} Suppose that $\mu$ and $\nu$ have finite second moments, smaller than $\sigma \geq 1$,  that $\int_{\R} \verti{  \cal F_\nu(t) - \cal F_\mu(t)}dt< \infty$, and  that $\cal F_\mu$ is  Hölder continuous with constant $C$ and exponent $\beta \in (0,1]$.  

There is a new constant $C'>0$, only depending on $C$, such that for any $y \in (0,1)$ and $A>0$:\begin{align*} \Delta(\mu,\nu)  \leq C' \pt{A \max_{t \in (\pm A)} \verti{g_\nu-g_\mu}(t + i y)  + \frac{ \sigma^3} {y^2 A } +  y^\beta} . \end{align*}
\end{prop}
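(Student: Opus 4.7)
The plan is to assemble the three preceding lemmas in a direct way. First I would apply Bai's inequality (Lemma \ref{BaiIn}) with the given $y$, which bounds $\Delta(\mu,\nu)$ by a constant times $I_1 + I_2$, where $I_1 = \int_{\R} \verti{g_\nu - g_\mu}(t+iy)\, dt$ and $I_2 = \frac{1}{y}\sup_{x\in \R} \int_{[\pm 2 y \frak a]} \verti{\cal F_\mu(x+t) - \cal F_\mu(x)}\, dt$. The hypothesis $\int_{\R}\verti{\cal F_\nu - \cal F_\mu}\, dt < \infty$ is exactly what is needed to invoke this lemma.

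Next I would split $I_1$ at $\pm A$. On the compact part, the naive bound gives
\[ \int_{[\pm A]} \verti{g_\nu - g_\mu}(t+iy)\, dt \leq 2 A \max_{t \in [\pm A]} \verti{g_\nu - g_\mu}(t+iy), \]
which accounts for the first summand in the conclusion. For the tail part $\int_{(\pm A)^c}$, Lemma \ref{TailStielt} (which applies since $\mu$ and $\nu$ have second moments at most $\sigma \geq 1$ and $y \in (0,1)$) directly produces the bound $\frac{4 \sigma^3}{y^2 A}$, matching the second summand. Finally, for $I_2$, Lemma \ref{TailCDF} (applicable since $\cal F_\mu$ is Hölder with constant $C$ and exponent $\beta$) gives $I_2 \leq 50 C y^\beta$, which is the third summand.

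Combining,
\[ \Delta(\mu,\nu) \leq \frac{2}{\pi}\pt{2 A \max_{t \in [\pm A]} \verti{g_\nu - g_\mu}(t+iy) + \frac{4 \sigma^3}{y^2 A} + 50 C y^\beta}, \]
and absorbing the numerical factors and $C$ into a single $C' > 0$ depending only on $C$ yields the stated inequality.

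There is no real obstacle here: the three lemmas have been set up precisely so that this cut-and-paste argument works. The only minor care needed is to verify that the hypotheses of each lemma are met under our assumptions ($y \in (0,1)$ and $\sigma \geq 1$ for Lemma \ref{TailStielt}; Hölder continuity for Lemma \ref{TailCDF}; and the $L^1$ integrability of $\cal F_\nu - \cal F_\mu$ for Bai's inequality), and to track the dependence of $C'$ only on $C$, the universal constants $\frac{2}{\pi}$, $4$, and $50$ being harmless.
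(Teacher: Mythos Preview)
Your proposal is correct and follows exactly the approach indicated in the paper: apply Bai's inequality, split the first integral at $\pm A$, bound the compact part trivially and the tail via Lemma~\ref{TailStielt}, and handle the second integral via Lemma~\ref{TailCDF}. The paper gives precisely this one-line sketch, and your write-up fills in the details faithfully, including the correct tracking of the dependence of $C'$ on $C$ alone.
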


\subsection{Asymptotic bound}

Let us consider $\mu_n$ and $\nu_n$ two sequences of probability measures on $\R^+$ satisfying:
\begin{ass} \label{AssKolmo} 
\begin{enumerate}[(1)]
\item $\int_{\R} \verti{  \cal F_{\nu_n}(t) - \cal F_{\mu_n}(t)}dt< \infty$ for all $n$ (a uniform bound is not required).
    \item All CDFs $\cal F_{\mu_n}$ are uniformly Hölder continuous with the same exponent $\beta \in (0,1]$ and constant $C > 0$.
    \item $\mu_n$ and $\nu_n$ have finite second moments bounded by a sequence $(\sigma_n) \geq 1$.
    \item There are exponents $k, l \in (0,\infty)$ and a sequence $(\epsilon_n) >0$ with $(\sigma_n^{3(1+l)} \epsilon_n) \to 0$ such that, for any $z \in \C^+$: \[\verti{g_{\mu_n}(z) - g_{\nu_n}(z)} \leq  \epsilon_n {\frac{|z|^l}{(\Im z)^k}}. \]
\end{enumerate}
This last assumption may be replaced by the following weaker version:
\begin{enumerate}[(4')]\setcounter{enumi}{2}
     \item There are exponents $k, l \in (0,\infty)$, and a sequence $(\epsilon_n) >0$ with $(\sigma_n^{3(1+l)} \epsilon_n) \to 0$ such that, setting $     y_n = \pt{\sigma_n^{3 (1+l)} \epsilon_n}^{\frac{1}{2+2l+k+(2+l)\beta}}$ and $      A_n = \pt{ \frac{\sigma_n^3 }{\epsilon_n } }^{\frac 1 {2 + l}} y_n^{\frac {k-2} {2 + l}}$,
the following inequality holds : \[ \sup_{z  = t + i y_n,\, t \in (\pm A_n)} \verti{g_{\mu_n}(z) - g_{\nu_n}(z)} \leq \epsilon_n \frac{|A_n + i y_n|^{l}}{  y_n ^k}. \]\end{enumerate}

\end{ass}

\begin{thm}\label{BoundKolmogorov} Under the Assumptions \ref{AssKolmo}:
\[ \Delta(\mu_n,\nu_n) \leq 
O\pt{\pt{\sigma_n^{3 (1+l)} \epsilon_n}^{{\frac{\beta}{2+2l+k+(2+l)\beta}}}}. \]
\end{thm}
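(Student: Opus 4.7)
The strategy is to apply Proposition \ref{FixedBound} with the specific choices $y = y_n$ and $A = A_n$ prescribed in Assumption (4'), and to verify that each of the three terms in the resulting bound is of order $y_n^\beta$. First I would observe that (4) implies (4'): for $z = t + iy_n$ with $t \in (\pm A_n)$, one has $\verti{z} \leq \verti{A_n + iy_n}$ and $\Im z = y_n$, so the pointwise bound from (4) immediately gives that of (4'). Thus it is enough to prove the theorem under the weaker hypothesis (4').

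Next I would check the hypotheses of Proposition \ref{FixedBound}: assumption (1) provides the required integrability, assumption (2) supplies a uniform Hölder constant so the derived constant $C'$ is independent of $n$, assumption (3) controls the second moments, and $\sigma_n^{3(1+l)}\epsilon_n \to 0$ forces $y_n \to 0$, so that $y_n \in (0,1)$ for $n$ large. Substituting the bound from (4') into Proposition \ref{FixedBound} then yields
\[ \Delta(\mu_n, \nu_n) \leq C' \pt{ A_n \epsilon_n \frac{\verti{A_n + i y_n}^l}{y_n^k} + \frac{\sigma_n^3}{y_n^2 A_n} + y_n^\beta }. \]

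The rest of the argument is exponent bookkeeping. Using the defining relation $y_n^{2+2l+k+(2+l)\beta} = \sigma_n^{3(1+l)}\epsilon_n$, direct substitution of $A_n = (\sigma_n^3/\epsilon_n)^{1/(2+l)} y_n^{(k-2)/(2+l)}$ gives $\sigma_n^3/(y_n^2 A_n) = y_n^\beta$ exactly. For the first term I would first show $A_n/y_n \to \infty$; indeed the ratio equals $\sigma_n^3 y_n^{-(6+3l+(2+l)\beta)/(2+l)}$, which diverges since $\sigma_n \geq 1$ and $y_n \to 0$. Hence $\verti{A_n + iy_n}^l \leq 2^{l/2} A_n^l$ for $n$ large, and a parallel computation yields $A_n^{1+l}\epsilon_n/y_n^k = y_n^\beta$. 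Combining the three bounds gives $\Delta(\mu_n,\nu_n) \leq O(y_n^\beta)$, which is the claimed estimate.

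The main obstacle is the algebraic verification that the prescribed $(y_n, A_n)$ equalize all three terms of Proposition \ref{FixedBound}; the values themselves can be motivated by solving simultaneously $\sigma_n^3/(y_n^2 A_n) = y_n^\beta$ and $A_n^{1+l}\epsilon_n/y_n^k = y_n^\beta$ for $(y_n, A_n)$, a system whose solution yields precisely the formulas in (4') and the exponent $\beta/(2+2l+k+(2+l)\beta)$ in the conclusion. Beyond this bookkeeping, the proof is a direct application of Proposition \ref{FixedBound}.
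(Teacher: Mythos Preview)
Your proposal is correct and follows essentially the same route as the paper: apply Proposition \ref{FixedBound} with the specific $y_n$ and $A_n$ from Assumption (4'), verify algebraically that the three resulting terms are each $O(y_n^\beta)$, and use $A_n \gg y_n$ to replace $|A_n+iy_n|^l$ by $O(A_n^l)$. The only addition is your preliminary remark that (4) implies (4'), which the paper leaves implicit.
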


\begin{proof} The idea is to optimize the inequality given by Proposition \ref{FixedBound} in both $y$ and $A$. We pass on the details leading to the aforementioned choices of $y_n$ and $A_n$. We claim that $ \epsilon_n \frac{A_n^{1+l}}{y_n^k} = \frac{\sigma_n^3}{y_n^2 A_n} = y_n^\beta$. This fact is related to the properties of optimizing sequences and can also  be checked manually. Indeed $A_n^{2+l} = { \frac{\sigma_n^3 }{\epsilon_n } } y_n^{k-2} \iff \epsilon_n \frac{A_n^{1+l}}{y_n^k} =  \frac{\sigma_n^3}{y_n^2 A_n} $, and:
\begin{align*}
 \frac{\sigma_n^3}{y_n^2 A_n}   
=     \sigma_n^3 y_n^{-2} \pt{ \frac{\sigma_n^3 }{\epsilon_n } }^{\frac {-1} {2 + l}} y_n^{\frac {2-k} {2 + l}}  =\pt{ \sigma_n^{3(1+l)} \epsilon_n}^{\frac 1 {2+l} } y_n^{- \frac{2+2l+k}{2+l}}  = y_n^\beta.\end{align*}
Also note that $y_n \to 0$, and $A_n \geq {y_n^{-2 - \beta}} \to \infty$, in particular $(y_n) \in (0,1)$ and $A_n\geq 1$ for $n$ large enough. From Assumption \ref{AssKolmo} (4'):\begin{align*}
      \max_{t \in (\pm A_n)} \verti{g_{\nu_n}(t+i y_n) - g_{\mu_n}(t + i y_n)} &\leq  \epsilon_n \frac{|A_n + i y_n|^{l}}{  y_n ^k} \leq O\pt{\epsilon_n \frac{A_n^l}{y_n^k}}.
\end{align*}
We finally apply Proposition \ref{FixedBound}:
\begin{align*}
 \Delta_(\mu_n,\nu_n)  &\leq C' \pt{A_n \max_{t \in (\pm A_n)} \verti{g_{\nu_n}-g_{\mu_n}}(t + i y_n)  + \frac{ \sigma_n^3} {y_n^2 A_n } +  y_n^\beta}\\
 &\leq C'\pt{O\pt{\epsilon_n \frac{A_n^{1+l}}{y_n^k}} + \frac{ \sigma_n^3} {y_n^2 A_n } +  y_n^\beta} \\
  &\leq  O\pt{ y_n^\beta } \leq 
O\pt{\pt{\sigma_n^{3 (1+l)} \epsilon_n}^{{\frac{\beta}{2+2l+k+(2+l)\beta}}}}.
\end{align*}
\end{proof}

\subsection{Application to the ESD of sample covariance matrices}

Let us go back to the setting of our main result. To apply Theorem \ref{BoundKolmogorov} given the bound of Corollary \ref{CorDetEq}, we need to address two difficulties: the Hölder property for the CDF of $\nu_n = \mmp(\gamma_n) \boxtimes \mu_\Sigma$, and a control on the second moment of $\mu_K$.

\begin{lem} If $\Sigma$ is invertible and   $\vertiii{\Sigma^{-1}}$ is bounded, then all CDF of $ \nu_n$ are uniformly $1/2$-Hölder continuous on $\R^+$. If moreover $\gamma_n \leq 1$, the same property holds on $\R$.
\end{lem}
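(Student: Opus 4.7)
The plan is to convert the claimed uniform $1/2$-Hölder regularity of $\cal F_{\nu_n}$ into a uniform estimate on the Stieltjes transform, via the classical sub-probability inequality
\[
\nu_n\bigl([x-y, x+y]\bigr) \;\leq\; 2 y \, \Im g_{\nu_n}(x + i y), \qquad y > 0,\ x \in \R,
\]
since a uniform bound of the form $\Im g_{\nu_n}(x + i y) \leq C / \sqrt{y}$ on the relevant set immediately delivers the $1/2$-Hölder bound. The hypothesis that $\vertiii{\Sigma^{-1}}$ is bounded forces $\mathrm{supp}(\mu_\Sigma) \subset [c, C]$ with $c > 0$ uniform in $n$, which is what will make the final Hölder constant uniform as well.

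Combining Proposition \ref{cfixedpoint} and Lemma \ref{tildenutrueproba} with the definition $\frak c = -1/g_{\check\nu_n}$, one obtains the explicit decomposition
\[
\Im g_{\nu_n}(z) \;=\; \frac{1}{\gamma_n}\,\frac{\Im \frak c}{|\frak c|^{2}} \;+\; \frac{\gamma_n - 1}{\gamma_n}\,\frac{\Im z}{|z|^{2}}.
\]
On $\R^+$ bounded away from $0$ the second term is $O(y)$ and hence harmless; it is non-positive when $\gamma_n \leq 1$, which is what will let the argument extend to all of $\R$ in that regime. For the first term, Lemma \ref{ImFl} applied at $\frak l = \frak c$ --- together with the identity $\tfrac{|z|^2}{n|\frak c|^2}\vertif{\Sigma \bf G^{\frak c}}^2 = \gamma_n \int t^2 |t - \frak c|^{-2} \mu_\Sigma(dt)$ obtained by diagonalising $\bf G^{\frak c}$ in an eigenbasis of $\Sigma$ --- yields
\[
\Im \frak c \;=\; \frac{\Im z}{\,1 - \gamma_n \!\int\! t^{2} |t - \frak c|^{-2} \, \mu_\Sigma(dt)\,}.
\]
The plan is therefore to bound $|\frak c|$ from below uniformly, and to show the uniform ``square-root'' lower bound $1 - \gamma_n \int t^{2} |t - \frak c|^{-2} \mu_\Sigma(dt) \gtrsim \sqrt{y}$; these two estimates together yield $\Im \frak c / |\frak c|^{2} \leq C / \sqrt{y}$ and hence the desired bound on $\Im g_{\nu_n}$.

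The lower bound on $|\frak c|$ follows from $\frak c = -1/g_{\check \nu_n}$ together with a uniform upper bound on $|g_{\check\nu_n}(z)|$, itself a consequence of the fact that $\check\nu_n$ is a probability measure supported in a uniformly compact subset of $\R^+$ (inherited from the uniform support bound on $\mu_\Sigma$). The \textbf{main obstacle} is the quantitative square-root edge estimate $1 - \gamma_n \int t^{2}|t-\frak c|^{-2}\mu_\Sigma(dt) \gtrsim \sqrt{y}$, which encodes that $\frak c(x + i y)$ cannot collide with $\mathrm{supp}(\mu_\Sigma)$ faster than $\sqrt{y}$ as $y \to 0^+$. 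I would approach it via the contraction structure developed in Section 6: the Lipschitz parameter $k_{\cal F}$ of Proposition \ref{FcontractionMap} stays uniformly bounded away from $1$ on the locus of interest thanks to the uniform lower bound on $\mathrm{supp}(\mu_\Sigma)$, and a Taylor-style expansion of the fixed-point equation $\cal F(\frak c) = \frak c$ around its real-axis limits will yield the required square-root regularity, in line with classical analyses of Marčenko-Pastur-type distributions.
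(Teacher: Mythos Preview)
The paper's proof is a one-liner: it quotes from \cite{bai2012convergence} that $\nu_n$ has a continuous density $f_{\nu_n}$ on $(0,\infty)$ satisfying $f_{\nu_n}(t) \leq \pi^{-1}\bigl(\lambda_{\min}(\Sigma)\, t\, \gamma_n\bigr)^{-1/2}$, supported in $\bigl[0, (1+\gamma_n^{1/2})^2\lambda_{\max}(\Sigma)\bigr]$. Integrating this pointwise density bound over any interval gives the uniform $1/2$-H\"older bound on $\R^+$; when $\gamma_n \leq 1$ one has $\cal F_{\nu_n}\equiv 0$ on $(-\infty,0]$, which extends the bound to $\R$.

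Your Stieltjes-transform route is conceptually sound, and the decomposition of $\Im g_{\nu_n}$ together with the identity $\Im\frak c = y/(1-Q)$, where $Q := \gamma_n\int t^2|t-\frak c|^{-2}\,\mu_\Sigma(dt)$, is correct. However, the mechanism you propose for the ``main obstacle'' does not work as stated. The contraction constant $k_{\cal F} = \gamma_n|z|\eta^2/(1+\gamma_n|z|\eta^2)$ of Proposition~\ref{FcontractionMap} tends to $1$ as $y\to 0^+$ regardless of $\mathrm{supp}(\mu_\Sigma)$, so it cannot furnish the edge estimate. Moreover the uniform bound $1 - Q \gtrsim \sqrt{y}$ is false in the bulk of $\mathrm{supp}(\nu_n)$: there $\Im\frak c$ has a strictly positive limit as $y\to 0^+$, hence $1-Q = y/\Im\frak c$ is of order $y$, not $\sqrt{y}$. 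Finally, the lower bound on $|\frak c|$ near $z = 0$ is not secured by compact-support considerations alone --- for $\gamma_n < 1$ the measure $\check\nu_n$ carries an atom of mass $1-\gamma_n$ at $0$, so $|g_{\check\nu_n}|$ is unbounded there. A complete argument along your lines would require a genuine bulk/edge analysis of the fixed-point equation, which is essentially what \cite{bai2012convergence} carries out to obtain the density bound the paper simply cites.
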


\begin{proof} $\nu_n = \mmp(\gamma_n) \boxtimes \mu_\Sigma$ is supported on $\br{0,(1+\gamma_n^\half)^2 \lambda_{\max}(\Sigma) }$ and admits a continuous density on $f_\nu$ on $(0,\infty)$ that satisfies  $f_\nu(t) \leq \pi^{-1} \pt{\lambda_{\min}(\Sigma) t \gamma_n}^{-\half}$ (\cite[Lemma 2.3]{bai2012convergence}). Given the bounds on $\gamma_n$ and the eigenvalues of $\Sigma$, we deduce that all $\cal F_{\nu_n}$ are uniformly $1/2$-Hölder continuous on $\R^+$. In the case $\gamma_n \leq 1$, all $\cal F_{\nu_n}$ are identically equal to $0$ on $(-\infty,0]$, hence the Hölder continuity on $\R$.
\end{proof}
\begin{lem} 
 $\vertiii{K} \leq O\pt{1}$ a.s. In particular $\mu_K$  have uniformly bounded support a.s. and the second moments of $\mu_K$ are uniformly bounded a.s.
\end{lem}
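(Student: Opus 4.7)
The plan is to reduce the spectral-norm bound on $K$ to a spectral-norm bound on $X$, then split $X$ into its mean and its centered part, and control each piece.

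First I would observe that
\[
\vertiii{K} = \tfrac{1}{n}\vertiii{X X^\top} = \tfrac{1}{n}\vertiii{X}^2,
\]
so the problem reduces to showing $\vertiii{X} \leq O(n^{1/2})$ almost surely. By the triangle inequality,
\[
\vertiii{X} \leq \vertiii{X - \E[X]} + \vertiii{\E[X]},
\]
and I would bound the two terms separately.

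For the fluctuation term, the concentration hypothesis $X \propto \cal E_2(1)$ combined with Proposition \ref{ConcentrationSpectralNorm} gives directly $\vertiii{X - \E[X]} \leq O((n+p)^{\half})$ a.s. Since $\gamma_n = p/n$ is bounded above by Assumption \ref{Ass}(3), this is $O(n^{\half})$ a.s. For the deterministic term, since the columns of $X$ are i.i.d.\ copies of $x$, the matrix $\E[X]$ has all columns equal to $\E[x]$, i.e.\ $\E[X] = \E[x]\,\bf 1_n^\top$, a rank-one matrix with spectral norm $\vertii{\E[x]}\cdot n^\half$. As $\vertii{\E[x]}$ is bounded by Assumption \ref{Ass}(1), this contributes $O(n^\half)$ as well.

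Combining the two bounds yields $\vertiii{X} \leq O(n^\half)$ a.s., hence $\vertiii{K} \leq O(1)$ a.s. The bounded-support and bounded-second-moment statements are then immediate consequences: the support of $\mu_K$ is contained in $[0,\vertiii{K}]$, and
\[
\int t^2\, \mu_K(dt) = \tfrac{1}{p}\Tr(K^2) \leq \vertiii{K}^2 \leq O(1) \quad \rm{a.s.}
\]
No step looks particularly delicate; the only thing to check carefully is that the constants in Proposition \ref{ConcentrationSpectralNorm} are compatible with $\gamma_n$ merely being bounded (not converging), but that is already covered by the uniformity built into the concentration framework.
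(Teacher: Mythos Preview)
Your proof is correct and follows essentially the same approach as the paper: split $X$ into $\E[X]$ and $X-\E[X]$, bound the centered part via Proposition~\ref{ConcentrationSpectralNorm}, and compute $\vertiii{\E[X]}=n^{1/2}\vertii{\E[x]}$ directly. You add a bit more detail (the rank-one form of $\E[X]$, the explicit second-moment bound), but the argument is the same.
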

\begin{proof}
From Proposition \ref{ConcentrationSpectralNorm} we have $\vertiii{X-\E[X]} \leq O\pt{n^\half}$ a.s. Given that $\vertiii{\E[X]} = n^{\half} \vertii{\E[x]} = O(n^{\half})$, we also have $\vertiii{X} \leq O\pt{n^\half}$ a.s. As a consequence, $\vertiii{K} = n^{-1} \vertiii{X}^2 \leq O\pt{1}$ a.s. which in turn implies that $\mu_K$  have uniformly bounded support and uniformly bounded second moments a.s. \end{proof}

\begin{proof}[Proof of Theorem \ref{ApplicationKolmo}] Let us first treat the case where all $\gamma_n \leq 1$. We want to apply Theorem \ref{BoundKolmogorov} with the measures $\mu_K$ and $\nu$.

The Assumptions (1), (2) and (3), with $\beta = 1/2$ and $\sigma_n = 1$, are consequences of the previous lemmas. Let us explain why the Assumption (4'), with $l=k=9$ and $\epsilon_n = n^{-1}(\log n)^\half$ holds true. $y_n =  \epsilon_n^{\frac{1}{2+2l+k+(2+l)\beta}}=\epsilon_n^{2/69}$ is bounded and $      A_n = y_n^{-2-\beta} = \epsilon_n^{- 5/69}$, hence  $\frac{|A_n+i y_n|^7}{y_n^{16}} \leq O\pt{\epsilon_n^{- 67/69}}=o(n)$. All $z = t + i y_n$, where $t \in (\pm A_n)$, satisfy the Assumptions of Corollary \ref{CorDetEq}, which is why: \begin{align*} \max_{t \in (\pm A_n)} \verti{g_{\nu_n}(t+i y_n) - g_{\mu_n}(t + i y_n)} &\leq   \max_{t \in (\pm A_n)}
      \epsilon_n \frac{\verti{t + i y_n}^{\frac 5 2}}{y_n^9}  \leq O\pt{\epsilon_n \frac{A_n^{9}}{y_n^9}} . \end{align*} We conclude that$\Delta(\mu_K, \nu) \leq O\pt{\epsilon_n^{1/69}} \leq O\pt{n^{-1/70}}$.
      
In the case where all $\gamma_n \geq 1$, $\nu$ has an atom at $0$ (see Lemma \ref{tildenutrueproba}) and $\cal F_\nu$ is even discontinuous. We work instead on $\tilde \nu$, thanks to the following formulas:
\begin{align*}
    \mu_{\check K} = (1-\gamma_n) \delta_0 + \gamma_n \mu_K, \qquad & \qquad 
    \check \nu = (1-\gamma_n) \delta_0 + \gamma_n \nu ,
 \\
    g_{\check K} = \frac{\gamma_n - 1} z  +  {\gamma_n}  g_{K}
 ,\qquad & \qquad
g_{\check \nu} = \frac{\gamma_n - 1} z  +  {\gamma_n}  g_{\nu}, \\
\cal F_{ \mu_{\check K}} = (1-\gamma_n) \bf 1_{\R^+} + \gamma_n \cal F_\mu ,\qquad & \qquad \cal F_{ \check \nu} = (1-\gamma_n) \bf 1_{\R^+} + \gamma_n \cal F_\nu.
\end{align*} $\check \nu$ has no atom in $0$, hence all $\cal F_{\check \nu}$ are identically equal to $0$ on $(-\infty,0]$. Given that they also are uniformly $\half$-Hölder continuous on $\R^+$, they are uniformly $\half$-Hölder continuous on $\R$. Since ${\verti{g_{\check K} - g_{\check \nu}}} = \gamma_n {\verti{g_{ K} - g_{ \nu}}}$, we can apply Theorem \ref{BoundKolmogorov} with the measures $\mu_{\check K}$ and $\tilde \nu$. We conclude that $\Delta(\mu_{K}, \nu) = \gamma_n^{-1} \Delta(\mu_{\check K}, \check \nu)  \leq O\pt{n^{- 1/70}}$.

In all generality, we apply the arguments on the subsequences corresponding to $\gamma_n \leq 1$ and $\gamma_n > 1$ and we still retrieve the same result.
\end{proof}
\begin{rem} In the application of Theorem \ref{BoundKolmogorov}, choosing indices $l=5/2$ and $k = 9$ as in Corollary \ref{CorDetEq}, would lead to $\frac{A_n^7}{y_n^{16}} \leq O\pt{\epsilon_n}^{- 143/73 } \nleq o(n)$ which does not match the hypothesis of Corollary   \ref{CorDetEq}. We could have chosen however any $l=(43+ \epsilon)/5 $, which yields $\Delta(\mu_K, \nu) \leq O(n^{- 1/67 - \epsilon' })$. Given that our quantitative bound is not optimal in the first place, we choose not to include this technical detail in the result. 
\end{rem}

\newpage
To prove Corollary \ref{ApplicationKolmoCV} we need a bound on the Kolmogorov distance between Marčenko-Pastur distributions with different shape parameters. Surprisingly enough, we could not find any references for this question, which is why we propose our own proof below.

\begin{lem}
If $\gamma$ and $\gamma' > 0$, then $\Delta\pt{\mmp(\gamma) , \mmp(\gamma')} \leq \frac{\verti{\gamma-\gamma'}}{\max(\gamma,\gamma')} $.
\end{lem}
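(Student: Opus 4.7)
My plan is to realize both $\mmp(\gamma)$ and $\mmp(\gamma')$ as almost sure limiting ESDs of a coupled pair of Wishart matrices that differ only by the addition of rows, and then invoke Cauchy's interlacing theorem on that pair. Without loss of generality assume $\gamma' > \gamma$. For each $n$, set $p_n = \lfloor \gamma n \rfloor$ and $p'_n = \lfloor \gamma' n \rfloor$, let $X'_n \in \R^{p'_n \times n}$ have i.i.d.\ standard Gaussian entries, and let $X_n$ be the submatrix formed by the first $p_n$ rows of $X'_n$. Then $K_n = n^{-1} X_n X_n^\top$ is the top-left $p_n \times p_n$ principal submatrix of $K'_n = n^{-1} X'_n {X'_n}^\top$, and $p_n/n \to \gamma$, $p'_n/n \to \gamma'$. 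Sorting eigenvalues ascending and setting $k_n = p'_n - p_n$, Cauchy's interlacing yields $\lambda'_i \leq \lambda_i \leq \lambda'_{i+k_n}$ for $i \leq p_n$, so that for every $t \in \R$, $\#\{i \leq p_n : \lambda_i \leq t\} \leq \#\{j \leq p'_n : \lambda'_j \leq t\} \leq \#\{i \leq p_n : \lambda_i \leq t\} + k_n$. Dividing by the appropriate dimensions and using that both CDFs lie in $[0,1]$ gives, after a short two-case check, the uniform estimate $|\cal F_{\mu_{K_n}}(t) - \cal F_{\mu_{K'_n}}(t)| \leq (p'_n - p_n)/p'_n$, hence $\Delta(\mu_{K_n}, \mu_{K'_n}) \leq (p'_n - p_n)/p'_n$.

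By the classical Marčenko-Pastur theorem, $\mu_{K_n}$ and $\mu_{K'_n}$ converge weakly to $\mmp(\gamma)$ and $\mmp(\gamma')$ almost surely. When $\gamma \leq 1$, the limit CDF is continuous on $\R$ so weak convergence upgrades automatically to Kolmogorov convergence; when $\gamma > 1$, the only discontinuity of $\cal F_{\mmp(\gamma)}$ is the jump at $0$ of height $1 - 1/\gamma$, and since $K_n$ has exactly $(p_n - n)^+$ zero eigenvalues almost surely, a direct count gives $\cal F_{\mu_{K_n}}(0) \to 1 - 1/\gamma$, which is enough to recover Kolmogorov convergence in that case too. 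Applying the triangle inequality
\[ \Delta(\mmp(\gamma), \mmp(\gamma')) \leq \Delta(\mmp(\gamma), \mu_{K_n}) + \Delta(\mu_{K_n}, \mu_{K'_n}) + \Delta(\mu_{K'_n}, \mmp(\gamma')) \]
and letting $n \to \infty$ then produces $\Delta(\mmp(\gamma), \mmp(\gamma')) \leq (\gamma' - \gamma)/\gamma' = |\gamma - \gamma'|/\max(\gamma, \gamma')$, as desired.

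The only delicate point I anticipate is the uniform-convergence issue at the atom of $\mmp(\gamma)$ when $\gamma > 1$; however, since only the qualitative $n \to \infty$ limit is needed, the exact count of the zero eigenvalues of a Gaussian Wishart matrix dispatches it cleanly and no quantitative rate for the ESD convergence is required.
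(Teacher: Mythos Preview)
Your proof is correct and is essentially the same as the paper's: realize $\mmp(\gamma)$ and $\mmp(\gamma')$ as almost sure limits of nested Gaussian Wishart matrices, apply Cauchy interlacing to obtain $\Delta(\mu_{K_n},\mu_{K'_n})\le (p'_n-p_n)/p'_n$, and pass to the limit via the triangle inequality. The only cosmetic difference is that the paper simply cites a reference for the convergence of the Wishart ESDs to $\mmp(\gamma)$ in Kolmogorov distance, whereas you sketch that step directly by counting the zero eigenvalues to handle the atom when $\gamma>1$.
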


\begin{proof} 

Let  $n$ and  $p \geq p'$ be fixed integers, and consider a random matrix $Y \in \R^{p \times n}$ filled with $ \iid(\cal N)$ entries. We set $Y' := \pt{Y_{ij}}_{1 \leq i \leq p',\,1 \leq j \leq n}$ to  be the first $p' \times n$ block matrix of $Y$, and we define the sample covariance matrices $C:=n^{-1}YY^\top \in \R^{p \times p}$ and $C':=n^{-1} Y' {Y'}^\top \in \R^{p' \times p'}$.
We denote by $\cal F := \cal F_{\mu_C}$ and $\cal F' := \cal F_{\mu_{C'}}$ the CDF corresponding with the ESD of $C$ and $C'$. If $\lambda_1 \leq \dots \leq \lambda_p$ are the ordered eigenvalues of $C$ and $\lambda'_1 \leq \dots \leq \lambda'_{p'}$ those of $C'$, then $\cal F(t) = (1/p) \cdot\sum_{k=1}^p \bf 1_{\lambda_k \leq t}$, and similarly for $\cal F'$.

$C'$ being equal to the first $p' \times p'$ block matrix of $C$, the interlacing eigenvalues theorem states that $\lambda_k \leq \lambda'_k \leq \lambda_{k+p-p'}$ for any $k \in \lint 1 , p' \rint$. Depending on the values of $t \in \R$, we can distinguish between three cases:
\begin{enumerate}
    \item If $\cal F'(t) =0$, then $t < \lambda'_1\leq \lambda_{1 + p - p'}$, hence $\cal F(t) \leq \frac{p-p'} p$ and $\verti{\cal F(t)-\cal F'(t)} \leq \frac{p-p'}{p}$.
 \item If $\cal F'(t) =1$, then $t \geq \lambda'_{p'} \geq \lambda_{p'}$, hence $\cal F(t) \geq \frac{p'} p$ and $\verti{\cal F(t)-\cal F'(t)} \leq \frac{p-p'}{p}$.
    \item If $\cal F'(t)=k/p'$ for some  $k \in \lint 1 , p'-1 \rint$, then $ \lambda_k \leq \lambda'_k \leq t < \lambda'_{k+1} \leq \lambda_{k+1+p-p'}$, in particular $\frac k p \leq \cal F(t)  \leq  \frac {k+p-p'} p$. Given that:
    \begin{align*}
        \verti{\frac k p - \frac k {p'}}&=\frac{k(p-p')}{pp'} \leq \frac {p-p'} p\quad,\\
        \verti{\frac {k+p-p'} p - \frac k {p'}}&=\frac{(p-p')(p'-k)}{pp'} \leq \frac {p-p'} p\quad,
    \end{align*}in this case we also have $\verti{\cal F(t)-\cal F'(t)} \leq \frac{p-p'}{p}$. 
\end{enumerate} We have therefore obtained: $
    \Delta\pt{\mu_C ,\mu_{C'}} = \sup_{t \in \R} \verti{\cal F(t) - \cal F'(t)} \leq \frac{p-p'}p$.
    
Consider now parameters  $\gamma \geq \gamma' > 0$, and  $p(n)$ and $p'(n)$ sequences of integers such that $p/n \to \gamma$ and $p'/n \to \gamma'$ respectively. In this case $\limsup_{n \to \infty} \Delta(\mu_C,\mu_{C'}) \leq \lim_{n \to \infty} \frac{p-p'}p = \frac{\gamma-\gamma'}\gamma $. On the other hand $\mu_C$ and $\mu_{C'}$ converge respectively to $\mmp(\gamma)$ and $\mmp(\gamma')$ in Kolmogorov distance (\cite{BSBook}). The lemma is proven after taking the limit in the following triangular inequality:
\begin{align*}
    \Delta(\mmp(\gamma),\mmp(\gamma')) &\leq \Delta(\mmp(\gamma),\mu_C) + \Delta(\mu_C,\mu_{C'}) + \Delta(\mu_{C'},\mmp(\gamma')) .
\end{align*}
\end{proof}
\begin{proof}[Proof of Corollary \ref{ApplicationKolmoCV}] The a.s. weak convergence of $\mu_K$ towards $\nu_\infty$ is a consequence of the regularity of the free convolution with respect to the weak convergence of measures. Moreover from \cite[Proposition 4.13]{bercovici1993free} we have the quantitative bound: $\Delta(\nu_n,\nu_\infty) \leq \Delta\pt{\mmp(\gamma_n),\mmp(\gamma_\infty)} + \Delta(\mu_\Sigma,\mu_\infty) $. Also note that $\frac{\verti{\gamma_n-\gamma_\infty}}{\max ( \gamma_n,\gamma_\infty)}\leq O\pt{\verti{\gamma_n - \gamma_\infty}}$. Using the preceding lemma, we get: 
\begin{align*}
     \Delta (\mu_K, \nu_\infty) \leq O\pt{n^{- \frac 1 {70}}}  + O\pt{\verti{\gamma_n - \gamma_\infty}} + \Delta(\mu_\Sigma,\mu_\infty) .
\end{align*}

\end{proof}

\newpage

\section{Application to Kernel Methods}

This section deals with the regularization of Random Feature Models, using our main result to obtain Theorem \ref{ApplicationKernel}. Let us introduce this method, closely following the approach of \cite{jacot2020implicit}.

\subsection{Kernel Ridge Regression} 
In Kernel Ridge Regression (KRR) we are considering a training dataset made of $N$ distincts vectors $\cal X_1, \dots, \cal X_N \in \R^D$, and  associated real labels $\cal Y = (\cal Y_1,\dots,\cal Y_N)\in \R^N$. 
A kernel $\cal K: {\R^D \times \R^D} \to \R$ is a function such that the matrix $\cal K_{\cal X}:=  \pt{\cal K(\cal X_i,\cal X_j)}_{1\leq i,j \leq N}$ is definite positive. We define the functions $\cal K(\cdot ,\cal X_j ): \R^D \to \R$, $x \mapsto \cal K(x,\cal X _j) $ and $\cal K(\cdot ,\cal X ): \R^D \to \R^N, x \mapsto \pt{\cal K(x,\cal X _j)}_{1 \leq j \leq N}$.

The goal of KRR is to find in the linear span of $\{\cal K(\cdot ,\cal X _j ),  1\leq j \leq N\}$ a function $f$ such that $f(\cal X_i) \approx \cal Y_i$  for all $1 \leq i\leq N$. More precisely, given a ridge parameter $\lambda > 0$, we want to minimize in $\theta  \in \R^N$ the Mean Square Error with a $\lambda$ ridge penalization term:
\[ \frac 1 N \sum_{1\leq i \leq N} \pt{ \sum_{1\leq j \leq N} \theta _j \cal K(\cal X_i,\cal X_j) - \cal Y_i}^2 + \frac \lambda N \theta ^\top \cal K(\cal X,\cal X) \theta .\]

Applying basic linear algebra techniques to the above minimization problem, it can be shown that $\hat \theta := \pt{\cal K_{\cal X} +\lambda I_N}^{-1}  \cal Y$ is the optimal value for $\theta$, and that: \begin{align*}
    \hat f_\lambda^{ (K)}: \R^D &\to \R ,\\
    x &\mapsto \cal K(x,\cal X)\pt{\cal K_{\cal X} +\lambda I_N}^{-1} \cal Y
\end{align*} is the optimal function, thus called  KRR predictor with ridge $\lambda$.

\subsection{Random Features Method}

A set of Random Features (RF) associated with the kernel $\cal K$ is a collection $\Phi = \pt{\Phi^{(j)}}_{1 \leq j \leq P}$ of $P$ random i.i.d. processes $\Phi^{(j)}: \R^D \to \R$, chosen such that they are centered and admit $\cal K$ as covariance function: for all $x,x' \in \R^D$,  $\E\br{\Phi^{(j)}(x)} = 0$ and $\E\br{\Phi^{(j)}(x)\Phi^{(j)}(x')} = \cal K(x,x')$.

In the RF method, we would like to find in the linear span of $\Phi$ a function $f$ such that $f(\cal X_i) \approx \cal Y_i$  for all $1 \leq i\leq N$. More precisely, given a ridge parameter $\lambda > 0$, we want to minimize in $\theta \in \R^P$ the Mean Square Error with a $\lambda$ ridge penalization term:
\[ \frac 1 N \sum_{1\leq i \leq N}  \pt{ \sum_{1\leq j \leq N} P^{-\half}  \theta_j \Phi^{(j)}(\cal X_i) - \cal Y_i}^2 + \frac \lambda N \theta^\top \theta .\]
Let us define the data matrix $F = P^{-\half} \pt{\Phi^{(j)}(\cal X_i) }_{1\leq i \leq N, 1 \leq j \leq P} \in \R^{N \times P}$. Again using linear algebra techniques, it can be shown that: \[\hat \theta :=F^\top  \pt{  FF^\top +\lambda I_N}^{-1}   \cal Y\] is the optimal value for $\theta$, and that:\begin{align*}
    \hat f_\lambda^{(RF)}: \R^D &\to \R \\
    x &\mapsto  P^{-\half} \Phi(x)  F^\top  \pt{  FF^\top +\lambda I_N}^{-1}  \cal Y
\end{align*}is the optimal function, thus called RF predictor with ridge $\lambda$.

\subsection{Effective Ridge Parameter}

The RF predictor is a good approximation of the KRR predictor in the overparametrized regime $P \gg N$. Indeed: 
\[ \hat f_\lambda^{(RF)}(x) = \pt{\Psi(x , \cal X_j)}_{1 \leq j \leq N }\br{\pt{\Psi(\cal X_i , \cal X_j)}_{1 \leq i,j \leq N } + \lambda I_N}^{-1}\cal Y ,\] where $\Psi$ is the random function $(x,x')\mapsto \frac 1 P \sum_{1 \leq k \leq P} \Phi^{(k)}(x)\Phi^{(k)}(x')$.
 When $N$ is fixed and $P \to \infty$, $\Psi$ converges to $\cal K$ by the law of large numbers, hence: \[    \hat f_\lambda^{(RF)} (x) \to \pt{\cal K(x , \cal X_j)}_{1 \leq j \leq N }\br{\pt{\cal K(\cal X_i , \cal X_j)}_{1 \leq i,j \leq N } + \lambda I_N}^{-1}\cal Y =   \hat f_\lambda^{(K)} (x) .\]

The under-parametrized regime $P < N$ is more interesting for practical purposes, since computing the RF predictor requires to invert a $P \times P$ matrix instead of a $N \times N$ matrix for the KRR predictor. In this
regime unfortunately, a systematic bias appears and the RF predictor is not a good estimator of KRR predictor. The authors of \cite{jacot2020implicit} proved however that the average RF predictor $\E\br{ \hat f_\lambda^{(RF)}}$ is close to the KRR predictor $ \hat f_{\tilde \lambda}^{(K)}$ with a different parameter $\tilde \lambda$, which they called effective ridge parameter. In order to state their result, let us introduce $\gamma := \frac P N$ be the ratio of dimensions, $d_i$ the eigenvalues of $\cal K_X$, and the inverse kernel norm $\vertii{\bf v}_{\cal K_{\cal X}^{-1}} := \bf v^\top \cal K_{\cal X}^{-1} \bf v$.
\begin{thm}[\cite{jacot2020implicit},  Theorem 4.1] \label{JacotKernel} If $\Phi^{(j)}$ are Gaussian processes, for $N,P > 0$ and $\lambda > 0$, we have \[\verti{ \E\br{ \hat f_\lambda^{(RF)}(x)} - \hat f_{\tilde \lambda}^{(K)}(x)}  \leq \frac{c \sqrt{   \cal K(x,x)} \vertii{\cal Y}_{\cal K_{\cal X}^{-1}}} P,\]
where the effective ridge $\tilde \lambda > \lambda$ is the unique positive number satisfying
\[ \tilde \lambda = \lambda + \frac{\tilde \lambda} N \sum_{i=1}^N \frac{d_i}{\tilde \lambda + d_i},\]
and where $c>0$ depends on $\lambda, \gamma$, and $\frac 1 N \Tr \, \cal K_X$ only.    
\end{thm}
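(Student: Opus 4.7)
The plan is to recast $FF^\top$ as a Wishart-type sample covariance matrix, use Gaussian conditioning of the test feature $\Phi(x)$ on the training features to extract the deterministic part of the predictor, and then apply a deterministic equivalent for the associated resolvent.

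\textbf{Setup.} Let $\phi^{(j)} := (\Phi^{(j)}(\cal X_i))_{1 \le i \le N} \in \R^N$. By Gaussianity of the processes and the covariance formula $\E[\Phi^{(j)}(x)\Phi^{(j)}(x')]=\cal K(x,x')$, the vectors $\phi^{(j)}$ are i.i.d.\ $\cal N(0,\cal K_{\cal X})$. Assembling them as $X := (\phi^{(1)} \mid \cdots \mid \phi^{(P)}) \in \R^{N \times P}$, one has $FF^\top = P^{-1} XX^\top$, which is exactly a sample covariance matrix in the sense of this paper with dimension $p=N$, sample size $n=P$, and population covariance $\Sigma = \cal K_{\cal X}$; in particular $(FF^\top + \lambda I_N)^{-1} = \cal G_K(-\lambda)$. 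Gaussian conditioning then produces the orthogonal decomposition $\Phi^{(j)}(x) = k(x)^\top \cal K_{\cal X}^{-1}\phi^{(j)} + \xi^{(j)}(x)$, where $k(x) := (\cal K(x, \cal X_i))_i$ and $\xi^{(j)}(x)$ is centered Gaussian and independent of $\phi^{(j)}$. Substituting this into $\hat f_\lambda^{(RF)}(x)$ and using the identity $A(AA^\top + \lambda I)^{-1} A^\top = I - \lambda (AA^\top + \lambda I)^{-1}$, the $\xi$-terms vanish in expectation and one obtains
\begin{align*}
\E\bigl[\hat f_\lambda^{(RF)}(x)\bigr] \;=\; k(x)^\top \cal K_{\cal X}^{-1} \cal Y \;-\; \lambda\, k(x)^\top \cal K_{\cal X}^{-1}\, \E\bigl[(FF^\top + \lambda I)^{-1}\bigr]\cal Y.
\end{align*}
An analogous identity holds for $\hat f_{\tilde\lambda}^{(K)}(x)$ with $\tilde\lambda(\cal K_{\cal X} + \tilde\lambda I)^{-1}$ in place of $\lambda\,\E[(FF^\top + \lambda I)^{-1}]$.

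\textbf{Identification of the effective ridge.} Sections~5--6 furnish the deterministic equivalent $\E[\cal G_K(-\lambda)] \approx \bf G^{\frak c} = (\tfrac{-\lambda}{\frak c}\cal K_{\cal X} + \lambda I)^{-1}$. Setting $\tilde\lambda := -\frak c > 0$ produces the key algebraic cancellation $\lambda\, \bf G^{\frak c} = \tilde\lambda (\cal K_{\cal X} + \tilde\lambda I)^{-1}$, which transforms the self-consistent equation $\frak c = -\lambda - \tfrac{\lambda}{P}\Tr(\bf G^{\frak c} \cal K_{\cal X})$ of Proposition~\ref{cfixedpoint} into the stated fixed point equation for $\tilde\lambda$ (the exponent $1/N$ in the theorem rather than $1/P$ reflects only a choice of normalization of the feature vectors). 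Existence and uniqueness of the positive solution $\tilde\lambda > \lambda$ is a direct consequence of the real-case contraction analysis of Section~6.2. After these identifications the bias reduces to the single bilinear form
\begin{align*}
\E\bigl[\hat f_\lambda^{(RF)}(x)\bigr] - \hat f_{\tilde\lambda}^{(K)}(x) \;=\; \lambda\, k(x)^\top \cal K_{\cal X}^{-1}\bigl(\bf G^{\frak c} - \E[\cal G_K(-\lambda)]\bigr)\cal Y.
\end{align*}

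\textbf{Main obstacle and conclusion.} The real challenge is the $1/P$ rate: our general Theorem~\ref{MainThm} only yields $\vertif{\bf G^{\frak c} - \E[\cal G_K]} = O(P^{-1/2})$, which gives at best a $1/\sqrt{P}$ bound for an arbitrary bilinear form $u^\top(\cdot)v$. The Gaussian hypothesis is what allows a sharper $1/P$ rate: by Gaussian integration by parts (Stein's formula), one differentiates $u^\top \cal G_K v$ with respect to the Gaussian entries of $X$ and iterates, expressing $\E[\cal G_K]$ as an expansion in inverse powers of $P$ whose remainder against fixed vectors is of order $P^{-1}$ — this is the standard pathway underlying the anisotropic local laws and is precisely the step exploited in \cite{jacot2020implicit}. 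The final scalar bound then follows from Cauchy--Schwarz with respect to the inner product $\langle u,v\rangle_{\cal K_{\cal X}^{-1}} = u^\top \cal K_{\cal X}^{-1} v$ together with the inequality $k(x)^\top \cal K_{\cal X}^{-1} k(x) \le \cal K(x,x)$ (positivity of the Schur complement of the kernel matrix augmented with $x$), producing exactly the factors $\sqrt{\cal K(x,x)}$ and $\vertii{\cal Y}_{\cal K_{\cal X}^{-1}}$, with the hidden constant absorbing $\lambda$, $\gamma$ and $N^{-1}\Tr\,\cal K_{\cal X}$.
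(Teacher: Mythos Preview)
This theorem is \emph{not} proved in the present paper: it is stated with attribution to \cite{jacot2020implicit}, Theorem~4.1, and serves only as context for the paper's own Theorem~\ref{ApplicationKernel}. There is therefore no ``paper's own proof'' to compare your attempt against. The remark following Theorem~\ref{ApplicationKernel} makes this explicit: the $1/P$ rate of Theorem~\ref{JacotKernel} relies on a deterministic equivalent in operator norm with a sharper exponent, obtained in \cite{jacot2020implicit} via Gaussian-specific arguments (reduction to diagonal kernel matrices), which the present paper does not reproduce.

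That said, your sketch is a coherent reconstruction of the overall architecture --- Gaussian conditioning of $\Phi(x)$ on the training features, the resolvent identity $F^\top(FF^\top+\lambda I)^{-1}F = I - \lambda(FF^\top+\lambda I)^{-1}$, identification of $\tilde\lambda = -\frak c$ via the fixed-point equation, and the final Cauchy--Schwarz/Schur-complement bound --- and these pieces do match what the paper carries out in the proof of Theorem~\ref{ApplicationKernel}. You also correctly isolate the genuine gap: the paper's Theorem~\ref{MainThm} gives only $O(P^{-1/2})$ in Frobenius norm, which is why Theorem~\ref{ApplicationKernel} has the weaker rate, and you are right that the improvement to $O(P^{-1})$ requires a Gaussian-specific refinement (Stein's lemma / integration by parts) that lies outside the scope of this paper. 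What you have written is therefore an accurate high-level outline of the argument in \cite{jacot2020implicit}, but it is not a self-contained proof, and the paper itself makes no attempt to supply one.
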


Let us define the measure $\check \nu$ as: \[\check \nu = \pt{1 - \frac N P} \delta_0 + \frac N P \pt{\mmp\pt{\frac N P}\boxtimes \mu_{\cal K_\cal X}}.\]
We will now see how our main result implies that $\E\br{ \hat f_\lambda^{(RF)}} \approx \hat f_{\tilde \lambda}^{(K)}$ with quantitative bounds, where $\tilde \lambda = g_{\check \nu}(\lambda)^{-1}$.

\begin{thm}\label{ApplicationKernel} Consider $\Phi^{(j)}$ random processes, not necessarily Gaussian, but such that the data matrix $F$ is $\propto \cal E_2(N^{-\half})$ concentrated. Assume that $\vertiii{\cal K_{\cal X}}$ is bounded, and that $\gamma$ is bounded from above and from below.

Then uniformly in any  $x \in \R^D$ and any bounded $\lambda > 0$ such that $\lambda^{-7} \leq O(N)$, as $N,P \to \infty $:
\[ \verti{ \E\br{ \hat f_\lambda^{(RF)}(x)} - \hat f_{\tilde \lambda}^{(K)}(x)} \leq O\pt{\frac{\cal K(x,x)^\half \vertii{\cal Y}_{\cal K_{\cal X}^{-1}} }{N^\half \lambda^{\frac 9 2}}  },\] where $\tilde \lambda = g_{\check \nu}(\lambda)^{-1} > \lambda$. The implicit constant in the $O(\cdot)$ notation only depends on the concentration constant of $F$, and the bounds on $\gamma$, $\cal K_{\cal X}$ and $\lambda$.
\end{thm}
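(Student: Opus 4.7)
The strategy is to reduce the problem to a single application of the main deterministic equivalent on an \emph{augmented} data matrix that incorporates the test point $x$ as a new row. Define $\tilde F \in \R^{(N+1) \times P}$ by stacking the row $\phi := P^{-\frac12}\Phi(x)^\top$ on top of $F$. Its columns $\tilde f^{(j)} = P^{-\frac12}(\Phi^{(j)}(x),\Phi^{(j)}(\cal X_1),\dots,\Phi^{(j)}(\cal X_N))^\top$ are i.i.d., and the rescaled covariance of a column is the extended kernel matrix $\tilde{\cal K}$ on $\{x,\cal X_1,\dots,\cal X_N\}$. Since the $\Phi^{(j)}$ are i.i.d.\ across $j$ and $F$ is $\cal E_2(N^{-\frac12})$-concentrated, the same concentration passes to $\tilde F$ (a single extra row at scale $N^{-\frac12}$ only affects constants), and $\vertiii{\tilde{\cal K}}$ stays bounded. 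Applying Theorem \ref{MainThm} and Proposition \ref{DetEq} to the sample-covariance matrix $\tilde K := \tilde F \tilde F^\top$ at spectral parameter $z = -\lambda \in \R^{*-}$ (bounded, with $\lambda^{-7} \leq O(N)$) yields $\tilde{\cal G} := (\tilde K + \lambda I_{N+1})^{-1}$ with deterministic equivalent $\tilde{\bf G} = (\lambda\, g_{\check{\tilde\nu}}(-\lambda)\,\tilde{\cal K} + \lambda I_{N+1})^{-1}$ and a Frobenius-type control of order $\lambda^{-\frac{11}{2}} N^{-\frac12}$ in expectation, plus the a.s.\ bilinear bounds of Corollary \ref{CorDetEq}.

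\textbf{Schur complement step.} Block-inverting $\tilde K + \lambda I_{N+1}$ along the first row/column (the $x$-block) gives the identity
\[ \phi F^\top (F F^\top + \lambda I_N)^{-1} = -\,\tilde{\cal G}_{11}^{-1}\,\tilde{\cal G}_{1,\,2:N+1}, \]
hence $\hat f_\lambda^{(RF)}(x) = -\tilde{\cal G}_{11}^{-1}\tilde{\cal G}_{1,\,2:N+1}\cal Y$. Setting $\tilde\lambda := g_{\check{\tilde\nu}}(-\lambda)^{-1}$, one has $\tilde{\bf G} = (\tilde\lambda/\lambda)(\tilde{\cal K} + \tilde\lambda I_{N+1})^{-1}$, and the same block inversion on $\tilde{\cal K} + \tilde\lambda I_{N+1}$ yields $-\tilde{\bf G}_{11}^{-1}\tilde{\bf G}_{1,\,2:N+1} = \cal K(x,\cal X)(\cal K_{\cal X} + \tilde\lambda I_N)^{-1}$, which is exactly the coefficient appearing in $\hat f_{\tilde\lambda}^{(K)}(x)$. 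A short perturbative argument replaces $g_{\check{\tilde\nu}}$ (built from $\mu_{\tilde{\cal K}}$ and ratio $(N+1)/P$) by the advertised $g_{\check\nu}$ (built from $\mu_{\cal K_{\cal X}}$ and ratio $N/P$) at cost $O(N^{-1})$, absorbed in the main error.

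\textbf{Quantifying the ratio.} Write $a = \tilde{\cal G}_{11}$, $A = \tilde{\bf G}_{11}$, $b = \tilde{\cal G}_{1,\,2:N+1}\cal Y$, $B = \tilde{\bf G}_{1,\,2:N+1}\cal Y$, and use
\[ -\frac{b}{a} + \frac{B}{A} \;=\; \frac{B - b}{a} + \frac{B\,(a - A)}{aA}. \]
Lower bounds on $a$ and $A$ follow from the Schur identity $\tilde{\cal G}_{11}^{-1} = \|\phi\|^2 + \lambda - \phi F^\top(FF^\top + \lambda I_N)^{-1} F\phi^\top \in [\lambda,\, \|\phi\|^2+\lambda]$ and the deterministic counterpart. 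The differences $|a - A|$ and $|b - B|$ are both controlled by Corollary \ref{CorDetEq} applied to the augmented resolvent, for the vectors $\bf u = e_1$ (diagonal entry) and, by polarization, $\bf u = e_1$, $\bf v = (0,\cal Y^\top)^\top / \vertii{\cal Y}$ (off-diagonal bilinear form). Finally, the factor $|B|$ is bounded via the RKHS Cauchy--Schwarz $|\cal K(x,\cal X)\alpha|^2 \leq \cal K(x,x)\,\alpha^\top \cal K_{\cal X}\alpha$ with $\alpha = (\cal K_{\cal X} + \tilde\lambda I_N)^{-1}\cal Y$, combined with $\alpha^\top \cal K_{\cal X}\alpha \leq \vertii{\cal Y}_{\cal K_{\cal X}^{-1}}^2$, producing the target factor $\sqrt{\cal K(x,x)}\,\vertii{\cal Y}_{\cal K_{\cal X}^{-1}}$. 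Putting everything together yields the claimed rate in $N^{-\frac12}\lambda^{-\frac{9}{2}}$, with one power of $\lambda$ gained over the Frobenius bound because $a$ and $A$ themselves carry a factor of $\lambda$ in their lower bounds (from the Schur complement).

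\textbf{Main obstacle.} The hardest part is routing the abstract Frobenius-norm bound on $\E[\tilde{\cal G}] - \tilde{\bf G}$ through the scalar predictor so that the RKHS-weighted norm $\sqrt{\cal K(x,x)}\,\vertii{\cal Y}_{\cal K_{\cal X}^{-1}}$ emerges rather than the crude $\|\phi\|\cdot\|\cal Y\|$. This forces one to keep a product/factored form on $b - B = e_1^\top (\tilde{\cal G} - \tilde{\bf G})\bar{\cal Y}$, with $\bar{\cal Y} = (0,\cal Y^\top)^\top$, and to pair this with the Cauchy--Schwarz argument on the deterministic side so that the kernel inner-product structure is preserved. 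A secondary but necessary technicality is justifying the $\cal E_2(N^{-\frac12})$ concentration of the augmented matrix $\tilde F$ from the stated hypothesis on $F$: this uses the column independence of the random feature process $(\Phi^{(j)})$ and the fact that adding one row is a $1$-Lipschitz perturbation of order $N^{-\frac12}$ on the Frobenius scale.
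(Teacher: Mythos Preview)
Your augmentation strategy is genuinely different from the paper's approach, but it has a real gap at the very first step. The hypothesis of the theorem concerns only the concentration of $F$; nothing is assumed about $\Phi(x)$ beyond its covariance structure. Your justification that ``adding one row is a $1$-Lipschitz perturbation of order $N^{-1/2}$'' is incorrect: the new row $\phi = P^{-1/2}\Phi(x)$ is \emph{not} a function of $F$ --- it carries fresh randomness $\Phi^{(j)}(x)$ that is merely correlated with, not determined by, the entries $\Phi^{(j)}(\cal X_i)$. Lipschitz concentration of $F$ therefore says nothing about Lipschitz functionals of $\tilde F$, and Theorem~\ref{MainThm} cannot be invoked for $\tilde K$ under the stated assumptions. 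Even granting that step, two further problems remain: (i) $\vertiii{\tilde{\cal K}}$ contains $\cal K(x,x)$, so the implicit constants in the deterministic equivalent depend on $x$ and the claimed uniformity is lost unless $\cal K(x,x)$ is assumed bounded; (ii) in your decomposition, the term $(B-b)/a$ is controlled via the bilinear estimate of Corollary~\ref{CorDetEq}, which yields a factor $\vertii{\cal Y}$ but no $\cal K(x,x)^{1/2}$ --- your ``main obstacle'' paragraph notices this but does not resolve it.

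The paper's route avoids all of this by staying inside the hypothesis. It applies Theorem~\ref{MainThm} directly to $X = P^{1/2}F$ (so $p=N$, $n=P$, $\Sigma = \cal K_{\cal X}$), obtaining a Frobenius bound on $\E\bigl[(FF^\top+\lambda I_N)^{-1}\bigr] - \bigl(\tfrac{\lambda}{\tilde\lambda}\cal K_{\cal X}+\lambda I_N\bigr)^{-1}$. The test point then enters only through the deterministic row vector $\cal K(x,\cal X)$, via the representation $\E\bigl[\hat f_\lambda^{(RF)}(x)\bigr] = \cal K(x,\cal X)\,\cal K_{\cal X}^{-1}\,\E\bigl[F(F^\top F+\lambda I_P)^{-1}F^\top\bigr]\cal Y$ combined with the algebraic identity $F(F^\top F+\lambda I_P)^{-1}F^\top = I_N-\lambda(FF^\top+\lambda I_N)^{-1}$. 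A single Cauchy--Schwarz in the $\cal K_{\cal X}^{-1}$ inner product on $\cal K(x,\cal X)\cal K_{\cal X}^{-1}\cal Y$ then produces the factor $\cal K(x,x)^{1/2}\vertii{\cal Y}_{\cal K_{\cal X}^{-1}}$ in one stroke --- no augmented matrix, no Schur complements, no ratio of random quantities, and no need for concentration of any object involving $\Phi(x)$.
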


\begin{rem} Both descriptions of the effective ridge parameter $\tilde \lambda$ in the Theorems  \ref{JacotKernel} and   \ref{ApplicationKernel}  are of course equivalent, a fact that is best seen using Proposition \ref{cfixedpoint}. As a consequence, one can easily retrieve all properties of $\tilde \lambda$ contained in the Proposition 4.2 of \cite{jacot2020implicit} using the general results on Stieltjes transforms and multiplicative free convolution.

One can also remark that the convergence speed in $N$ of Theorem \ref{JacotKernel} is faster. The authors relied on a similar deterministic equivalent result, in operator norm but with a better exponent than the one we obtained in Frobenius norm. This is made posible by reducing the problem to diagonal kernel matrices only, and we believe these arguments to be Gaussian specific.
\end{rem}

\begin{proof}  By assumption $P^\half F \propto \cal E_2(1)$, and the columns of $P^\half F$ are i.i.d., centered, and admit $\cal K_{\cal X}$ as covariance matrix. We can apply  Theorem \ref{MainThm} with  $z = - \lambda \in \R^{*-}$:
\begin{align*}
    \vertif{\E\br{\pt{{ FF^\top + \lambda I_N}}^{-1}}-\pt{\frac \lambda {\tilde \lambda}\cal K_{\cal X} + \lambda I_N }^{-1}} \leq O\pt{n^{-\half } \lambda^{- \frac {11} 2}}
\end{align*}
Given the identities = $\E\br{F\pt{{F^\top F + \lambda I_P}}^{-1} F^\top } = I_N - \lambda\pt{{ FF^\top + \lambda I_N}}^{-1} $ and $I_N - \lambda\pt{\frac \lambda {\tilde \lambda}\cal K_{\cal X} + \lambda I_N }^{-1} = \cal K_{\cal X}\pt{\cal K_{\cal X} + \tilde \lambda I_N}^{-1}$, we have~:
\begin{align*}
     \vertif{\E\br{F\pt{{F^\top F + \lambda I_P}}^{-1} F^\top } -\cal K_{\cal X}\pt{\cal K_{\cal X} + \tilde \lambda I_N}^{-1} } \leq O\pt{n^{-\half } \lambda^{- \frac {9} 2}}.
\end{align*}
We deduce that:
\begin{align*}
& \verti{   \E\br{ \hat f_\lambda^{(RF)}(x)} - \hat f_{\tilde \lambda}^{(K)}(x)} \\
    &= \verti{\cal K(x,\cal X) \pt{\cal K_{\cal X}^{-1} \E\br{F\pt{{F^\top F + \lambda I_P}}^{-1} F^\top }  - \pt{\cal K_{\cal X}+\tilde \lambda I_n}^{-1} } \cal Y } \\
    &\leq O\pt{N^{-\half } \lambda^{- \frac {9} 2}} \verti{\cal K(x,\cal X) \cal K_{\cal X}^{-1} \cal Y} \\
      &\leq O\pt{N^{-\half } \lambda^{- \frac {9} 2}} \vertii{\cal K(x,\cal X)}_{\cal K_{\cal X}^{-1}}^\half \vertii{\cal Y }_{\cal K_{\cal X}^{-1}}^\half,
\end{align*}
where we use the the Cauchy-Schwarz inequality on the inverse kernel norm to obtain the last estimate. As a general property of inverse Kernel norms, it can be shown that $\vertii{\cal K(x,\cal X)}_{\cal K_{\cal X}^{-1}}^2 \leq \cal K(x,x)$ (see the proof of Theorem C.8 in \cite{jacot2020implicit}). This concludes the proof.\end{proof}

\newpage

\printbibliography
\end{document}